\documentclass[11pt,reqno]{amsart}

\usepackage{amsfonts, amsthm, amsmath, amssymb}
\allowdisplaybreaks[4]

\usepackage{tikz} 
\usetikzlibrary{arrows, arrows.meta}

\usepackage{float} 
\usepackage{multirow} 
\usepackage{supertabular} 
\usepackage{framed}

\usepackage{footnote}
\makesavenoteenv{minpage}   
\makesavenoteenv{itemize}  

\makeatletter

\newcommand{\Rmnum}[1]{\expandafter\@slowromancap\romannumeral #1@}
\makeatother

\usepackage{enumerate} 
\usepackage[inline]{enumitem} 

\usepackage[pagebackref]{hyperref} 
\hypersetup{colorlinks=true} 

\usepackage{mathabx}

\usepackage{extpfeil} 

\usepackage{mathtools} 

\numberwithin{equation}{section}
\topmargin 0.8in
\textheight=8.2in
\textwidth=6.4in
\voffset=-.68in
\hoffset=-.68in

\theoremstyle{plain}

\newtheorem{theorem}{Theorem}[section]

\newtheorem{claim}[theorem]{Claim}
\newtheorem{proposition}[theorem]{Proposition}

\theoremstyle{definition}
\newtheorem{Def}[theorem]{Definition}
\newtheorem{example}[theorem]{Example}
\newtheorem{conj}[theorem]{Conjecture}
\newtheorem{remark}[theorem]{Remark}
\newtheorem{?}[theorem]{Problem}

\newcommand{\SF}[1]{\color{magenta}#1}

\newcommand{\bN}{\mathbb{N}}
\newcommand{\bR}{\mathbb{R}}
\newcommand{\bx}{\mathbf{x}}
\newcommand{\A}{\mathcal{A}}
\newcommand{\B}{\mathcal{B}}
\newcommand{\D}{\mathcal{D}}
\newcommand{\cE}{\mathcal{E}}
\newcommand{\fD}{\mathfrak{D}}

\renewcommand{\S}{\mathfrak{S}}

\newcommand{\fG}{\mathfrak{G}}
\newcommand{\cH}{\mathcal{H}}
\newcommand{\cK}{\mathcal{K}}
\newcommand{\cR}{\mathcal{R}}
\newcommand{\cX}{\mathcal{X}}
\newcommand{\GI}{\fG^{\mathrm{I}}}
\newcommand{\tGI}{\widehat{\fG}^{\mathrm{I}}}
\newcommand{\cGI}{\widecheck{\fG}^{\mathrm{I}}}
\newcommand{\GII}{\fG^{\mathrm{II}}}
\newcommand{\GIII}{\fG^{\mathrm{III}}}
\newcommand{\GIV}{\fG^{\mathrm{IV}}}
\newcommand{\Gi}{G^{\mathrm{I}}}

\newcommand{\LI}{\Lambda^{\mathrm{I}}}
\newcommand{\LII}{\Lambda^{\mathrm{II}}}
\newcommand{\LIII}{\Lambda^{\mathrm{III}}}
\newcommand{\LIV}{\Lambda^{\mathrm{IV}}}
\newcommand{\proj}{\mathrm{proj}}
\newcommand{\DIII}{\D^{\mathrm{III}}}
\newcommand{\tD}{\widehat{\D}^{\mathrm{III}}}
\newcommand{\cD}{\widecheck{\D}^{\mathrm{III}}}

\def\oO{\mathrm{oO}}
\def\oE{\mathrm{oE}}
\def\eE{\mathrm{eE}}
\def\eO{\mathrm{eO}}
\def\Oo{\mathrm{Oo}}
\def\Eo{\mathrm{Eo}}
\def\Ee{\mathrm{Ee}}
\def\Oe{\mathrm{Oe}}
\def\Re{\mathrm{Re}}
\def\re{\mathrm{red}}
\def\CO{\mathcal{CO}}
\def\Co{\mathrm{CO}}
\def\DC{\mathcal{DC}}
\def\EC{\mathcal{EC}}
\def\PLY{\Psi_{\mathrm{LY}}}
\def\Lrmin{\mathrm{Lrmin}}
\def\Cmin{\mathrm{Cmin}}

\begin{document}

\title[Parity patterns meet Genocchi numbers, I.]{Parity patterns meet Genocchi numbers, I: four labelings and three bijections}

\author[Q. Yuan]{Quan Yuan}
\address[Quan Yuan]{College of Mathematics and Statistics, Chongqing University, Chongqing 401331, P.R. China}
\email{springyuan0929@163.com}

\author[Q. Fang]{Qi Fang}
\address[Qi Fang]{College of Mathematics and Statistics, Chongqing University, Chongqing 401331, P.R. China}
\email{qifangpapers@163.com}

\author[S. Fu]{Shishuo Fu} 
\address[Shishuo Fu]{College of Mathematics and Statistics, Chongqing University \& Key Laboratory of Nonlinear Analysis and its Applications (Chongqing University), Ministry of Education, Chongqing 401331, P.R. China}
\email{fsshuo@cqu.edu.cn}

\author[H. Li]{Haijun Li}
\address[Haijun Li]{College of Mathematics and Statistics, Chongqing University, Chongqing 401331, P.R. China}
\email{lihaijun@cqu.edu.cn}

\date{\today}

\begin{abstract}
Hetyei introduced in 2019 the homogenized Linial arrangement and showed that its regions are counted by the median Genocchi numbers. In the course of devising a different proof of Hetyei's result, Lazar and Wachs considered another hyperplane arrangement that is associated with certain bipartite graph called Ferrers graph. We bijectively label the regions of this latter arrangement with permutations whose ascents are subject to a parity restriction. This labeling not only establishes the equivalence between two enumerative results due to Hetyei and Lazar-Wachs, repectively, but also motivates us to derive and investigate a Seidel-like triangle that interweaves Genocchi numbers of both kinds. 

Applying similar ideas, we introduce three more variants of permutations with analogous parity restrictions. We provide labelings for regions of the aforementioned arrangement using these three sets of restricted permutations as well. Furthermore, bijections from our first permutation model to two previously known permutation models are established.
\end{abstract}

\keywords{Median Genocchi numbers, Genocchi numbers, Permutation patterns, Hyperplane arrangements, Seidel triangle, Bijective proof, Equidistribution.
\newline \indent 2020 {\it Mathematics Subject Classification}. 05A05, 05A15, 05A19, 52C35}

\maketitle


\section{Introduction}
The (signless) {\it Genocchi numbers} $\{g_n\}_{n\ge 1}=\{1,1,3,17,155,2073,38227,\ldots\}$ \cite[A110501]{slo} and its close relative the (signless) {\it median Genocchi numbers} $\{h_n\}_{n\ge 0}=\{1,2,8,56,608,9440,\ldots\}$ \cite[A005439]{slo} are two sequences that bear number theoretical, combinatorial, and most recently geometrical interest. Due to their direct relation with the even-indexed Bernoulli numbers $B_{2n}$, namely $g_n=2(1-2^{2n})(-1)^nB_{2n}$, the Genocchi numbers have a neat exponential generating function given by \cite{dum74}
$$
\sum_{n\ge 1}g_n\frac{x^{2n}}{(2n)!}=x\tan\frac{x}{2}.
$$ 
The two kinds of Genocchi numbers are related via the following identity that holds for all $n\in\bN$.
\begin{align}\label{id:two Genocchi}
h_n=\sum_{j=0}^{\lfloor\frac{n}{2}\rfloor}(-1)^j\binom{n+1}{2j+1}g_{n+1-j}.
\end{align}

Basing on Gandhi's generation of Genocchi numbers, Dumont~\cite{dum74} discovered in 1974 the first ever combinatorial interpretations of Genocchi numbers in terms of certain permutations subject to parity conditions on their descents and ascents (see Definition~\ref{def:five perm}). Since then, a wealth of combinatorial models whose enumerations yield either Genocchi numbers or median Genocchi numbers were found; see for example~\cite{DV80,vie81,DR94,BJS10,fei11,big14,het19,LW22,HJO23,FLS25,DLP25} and the references contained in the aforementioned two OEIS entries. 

A quite recent development stems from geometric considerations. Firstly, Hetyei~\cite{het19} showed in 2019, applying Athanasiadis's~\cite{ath96} finite field method of counting regions created by hyperplane arrangements, that certain tournaments are enumerated by the median Genocchi numbers (see Theorem~\ref{thm:hetyei}). In a follow up paper, Lazar and Wachs~\cite{LW22} made direct use of Zaslavsky's formula to relate the intersection lattice associated with a hyperplane arrangement to the number of its regions. Consequently, they were able to reprove and refine Hetyei's result. As an interesting byproduct of their work, Lazar and Wachs~\cite{LW22} introduced a new witness of median Genocchi numbers. In order to state their results, we recall some definitions. We write a permutation $\sigma$ of $[n]:=\{1,2,\ldots,n\}$ using the one-line notation $\sigma=\sigma_1\sigma_2\cdots\sigma_n$ where $\sigma_i=\sigma(i)$ for $1\le i\le n$. Following \cite{LW22}, we call the pair $(i,\sigma_i)$ a {\it drop} whenever $i>\sigma_i$. Note that it goes by other names such as ``anti-excedance'' in the literature; see for instance \cite{SZ22}. In particular, an {\it even-odd drop} refers to a drop $(i,\sigma_i)$ with $i$ being even and $\sigma_i$ being odd. Let $\S_n$ denote the set of permutations of $[n]$. Lazar and Wachs derived the following new interpretation for the median Genocchi numbers.
\begin{theorem}[{\cite[Coro.~6.2]{LW22}}]\label{thm:Lazar-Wachs}
For all $n\ge 1$, we have 
$$h_n=|\{\sigma\in\S_{2n}: \text{$\sigma$ has only even-odd drops}\}|.$$
\end{theorem}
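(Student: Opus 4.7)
Write $A_n := \{\sigma \in \S_{2n} : \sigma \text{ has only even-odd drops}\}$. My plan is to derive a recurrence for a suitable refinement of $|A_n|$ and match it to one of the Gandhi-type recursions known to generate the median Genocchi numbers. First, I unpack the definition: $\sigma \in A_n$ precisely when (i) $\sigma_{2k-1} \ge 2k-1$ for every $k \in [n]$, i.e., no odd position contains a drop, and (ii) $\sigma_i \ge i$ whenever $\sigma_i$ is even, i.e., no even value is the target of a drop. A clean consequence is that at the largest odd position $2n-1$ the only admissible values are $2n-1$ and $2n$; this rigid local constraint is what will drive the recursion.

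The key step is to split $A_n$ according to $\sigma_{2n-1} \in \{2n-1, 2n\}$, and then according to the value at position $2n$. When $\sigma_{2n-1} = 2n$, condition (ii) forces $\sigma_{2n}$ to be odd, so $\sigma_{2n} \in \{1, 3, \ldots, 2n-1\}$; when $\sigma_{2n-1} = 2n-1$, the same condition permits $\sigma_{2n} = 2n$ or any odd value $\le 2n-3$. In either case I delete the entries at positions $2n-1$ and $2n$ and re-index the remaining $2n-2$ values to obtain a permutation of $[2n-2]$. Tracking the statistic $s(\sigma) := |\{k : \sigma_{2k} \text{ is odd}\}|$ (the number of even-odd drops) via the polynomial $A_n(x) := \sum_{\sigma \in A_n} x^{s(\sigma)}$, I expect the case split to compile into a Gandhi-type operator identity, for instance $A_n(x) = x(x-1)\bigl(A_{n-1}(x+1) - A_{n-1}(x)\bigr)$ or a close variant, which together with $A_n(1) = h_n$ is a standard recursive generation of $\{h_n\}$.

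The main obstacle I anticipate is the bookkeeping in the deletion step: after removing the odd value $2j-1$ taken by $\sigma_{2n}$, the remaining $2n-2$ values have parities that no longer align with the default parities of $[2n-2]$, so the constraints at each surviving position must be re-expressed in the new labeling. The count of odd drops already accounted for at positions $\le 2n-2$ is precisely what should trigger the shift $x \mapsto x+1$ in the Gandhi operator, and confirming this cleanly is the technical heart of the argument. As a fall-back, if the recursion does not line up, I would instead look for a direct bijection from $A_n$ to a classical $h_n$-indexed family such as the Dellac configurations; the small instance $n=2$, for which I have enumerated $|A_2| = 8 = h_2$ by hand, lends confidence that such a correspondence exists.
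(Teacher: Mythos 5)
First, a remark on context: the paper does not prove this statement at all — it is quoted directly from Lazar--Wachs \cite[Coro.~6.2]{LW22}, and the surrounding discussion (Fig.~\ref{relation diagram}) merely observes that, once the labeling $\LI$ of Theorem~\ref{thm:label-I} and the Foata-type bijection $\Psi$ are available, it becomes equivalent to Hetyei's Theorem~\ref{thm:hetyei}. So a self-contained recursive proof such as the one you sketch would be a genuinely different route. Your unpacking of the even-odd drop condition into (i) and (ii) is correct, as are the observations that $\sigma_{2n-1}\in\{2n-1,2n\}$ and the resulting case analysis for $\sigma_{2n}$.

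However, the proposal has a genuine gap precisely at what you yourself call its technical heart, and so it does not yet constitute a proof. The Gandhi-type recursion is only conjectured (``I expect \dots or a close variant''), and the specific candidate you write down is false: from $A_1(x)=1+x$ (the permutations $12$ and $21$ of $\S_2$ contribute $1$ and $x$), the identity $A_2(x)=x(x-1)\bigl(A_1(x+1)-A_1(x)\bigr)=x(x-1)$ would give $A_2(1)=0\neq 8=h_2$. More seriously, the deletion step does not obviously stay inside the family: after removing positions $2n-1,2n$ together with the values $\sigma_{2n-1}$ and $\sigma_{2n}$, standardizing the surviving $2n-2$ values decrements by one every value lying strictly between $\sigma_{2n}$ and $2n$, flipping its parity; hence the induced permutation need not lie in $A_{n-1}$, and the conditions it does satisfy depend on the deleted value. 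Closing such a recursion typically requires a finer intermediate family (values prescribed at marked positions, or a surjective-pistol/Dumont-type object), none of which is set up here. A smaller inaccuracy: your statistic $s(\sigma)=|\{k:\sigma_{2k}\ \text{odd}\}|$ counts odd values at even positions, not even-odd drops, since $\sigma_{2k}$ may be odd and exceed $2k$. The fall-back of a direct bijection to Dellac configurations is plausible but likewise not carried out, so as written the argument establishes only the base cases it checks by hand.
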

They continued to make the following conjecture.
\begin{conj}[{\cite[Conj.~6.4]{LW22}}]\label{conj:Lazar-Wachs}
For all $n\ge 1$, the $n$-th Genocchi number $g_n$ is equal to the number of cycles on $[2n]$ with only even-odd drops.
\end{conj}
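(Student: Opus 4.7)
The plan is to prove Conjecture~\ref{conj:Lazar-Wachs} by constructing an explicit bijection between the set of $2n$-cycles on $[2n]$ with only even-odd drops (call these \emph{valid cycles}) and a classical family known to be enumerated by $g_n$, such as the Dumont permutations of the second kind on $[2n-2]$ or the Dellac configurations of size $n$.

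I would begin with a structural analysis of a valid cycle $\sigma$. The minimum element of $\sigma$ ends a descent in the cyclic sequence, so by the even-odd drop restriction that minimum must be odd; dually, the maximum starts a descent, so it must be even. In particular, $1$ and $2n$ automatically appear with the correct parities. Writing $\sigma$ cyclically as $(1 = c_1, c_2, \ldots, c_{2n})$, we see that $c_{2n}$ must be even (since $c_{2n} \to 1$ is a forced drop), while every internal descent $c_i > c_{i+1}$ is likewise a transition from an even value to a smaller odd value. Direct enumeration for $n = 1, 2, 3$ produces $1, 1, 3$ valid cycles, matching $g_1, g_2, g_3$, which is encouraging.

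Next, I would encode the linear word $(c_2, c_3, \ldots, c_{2n})$ as a permutation or labeled object of size $2n-1$ (or $2n-2$) through a carefully chosen positional rearrangement. A natural candidate tool is a variant of Foata's first fundamental transformation (or a related cycle-to-word bijection), which converts cycle minima into distinguished positions and allows one to track how drops of $\sigma$ correspond to descents of the resulting word. The target is to transmute the value-parity drop condition into the position-parity ascent/descent condition characterizing the Dumont family, at which point the classical Dumont--Gandhi enumeration yields $g_n$.

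The main obstacle will be this translation step. The valid-cycle condition is phrased using parities of \emph{values}, whereas the standard $g_n$-enumerating models typically impose parity conditions on \emph{positions} (e.g., $\sigma_{2i} \le 2i - 1$ for Dumont permutations of the first kind). Bridging these two viewpoints requires an encoding so that the parity and comparison structures align, and verifying that the proposed map is bijective and lands precisely in the target class will likely demand a case analysis based on the types (EE, EO, OE, OO) of transitions in the cyclic sequence of $\sigma$. Should a direct bijection prove elusive, a backup strategy is to refine the generating-function argument of Lazar--Wachs via a bivariate exponential generating function tracking the numbers of even and odd elements in each cycle; combined with the exponential formula and Theorem~\ref{thm:Lazar-Wachs}, this should permit the extraction of the cycle-type generating series, whose diagonal coefficients we would then identify with $g_n$ via the relation~\eqref{id:two Genocchi}.
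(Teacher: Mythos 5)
Your opening move is essentially the paper's: the paper proves Conjecture~\ref{conj:Lazar-Wachs} by applying the variant $\Psi$ of Foata's first fundamental transformation, under which a permutation $\pi\in\cE_{2n}$ with a single cycle corresponds exactly to a permutation $\Psi(\pi)\in\GI_{2n}$ (avoiding the parity patterns $\eE,\eO,\oO$) whose first letter is $1$. So your instinct to use a cycle-to-word bijection that turns cycle minima into distinguished positions, and to track how drops become descents/ascents, is the right one. The problem is that your proposal stops precisely where the real work begins. You never pin down the image class, and you explicitly defer the "translation step" from value-parity to position-parity as an obstacle to be overcome later; but that step, together with the enumeration of the resulting class by $g_n$, \emph{is} the proof. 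In the paper this is the chain $|\EC_{2n}|=\Gi_{2n,1}=\Gi_{2n,2}=S_{2n,n}=g_n$, where the middle equality is \eqref{id:1=2} (via the bijection $\eta$ exploiting the forced factor $21$), the next is the $k=1$ case of \eqref{id:GI and S}, and the last is \eqref{id:Seidel two Genocchi}. Establishing \eqref{id:GI and S} in turn requires either the recursion of Theorem~\ref{En-even} matched against the Seidel recursion \eqref{rec:S}, or the recursive bijection $\Phi$ onto $\DIII_{2n,2k}$ combined with the Burstein--Josuat-Verg\`es--Stromquist count. None of this content, nor a substitute for it, appears in your proposal, so as written it is a plan with a gap at its center rather than a proof.

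Your backup strategy also fails concretely. The number of even-odd-drop cycles supported on a subset $A\subseteq[2n]$ is \emph{not} a function of how many even and odd elements $A$ contains: the set $\{1,2\}$ supports one such cycle (the transposition $(1\,2)$, whose only drop is $(2,1)$), while $\{2,3\}$ supports none (the transposition $(2\,3)$ has the drop $(3,2)$, which is odd-even). Since the drop condition depends on the actual values and their interleaving rather than on the cardinalities of the even and odd parts, a bivariate exponential generating function indexed by those cardinalities does not satisfy the hypotheses of the exponential formula, and Theorem~\ref{thm:Lazar-Wachs} cannot be "inverted" to extract the cycle count this way. This is exactly why the conjecture resisted a quick derivation from the known count $|\cE_{2n}|=h_n$ and required new ideas in \cite{LY22,PZ23,che23} and here.
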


It is worth noting that these observations made by Lazar and Wachs are somewhat peculiar since usually one would expect a combinatorial model counted by the Genocchi numbers to encompass those counted by the median Genocchi numbers as a subset, but the contrast between Theorem~\ref{thm:Lazar-Wachs} and Conjecture~\ref{conj:Lazar-Wachs} above is exactly the other way around. Conjecture~\ref{conj:Lazar-Wachs} aroused a lot of interest. Within two years from the time when the arXiv version of \cite{LW22} was released, there are at least three independent proofs of Conjecture~\ref{conj:Lazar-Wachs} that we are aware of. The proof by Lin and Yan~\cite{LY22} is bijective and extends to the multiset setting. The proof by Pan and Zeng~\cite{PZ23} relies on an explicit J-fraction formula of an ordinary generating function that involves eight statistics. Finally, the proof by Chern~\cite{che23} employs a generating tree and partial differential equations and could be applied to enumerate another class of permutations subject to a similar parity condition.

Let $\cH_{2n-1}$ denote the homogenized Linial arrangment as introduced by Hetyei~\cite{het19} (see section~\ref{sec:pre} for the precise definition). One crucial step in Lazar-Wachs's derivation was to characterize the intersection lattice associated with $\cH_{2n-1}$ as a bond lattice associated with certain bipartite graph, then calculate its M\"obius function by counting non-broken-circuit (NBC) sets. Certain graphical arrangement $\cK_{2n}$ was introduced as an intermediate object in \cite[Thm.~3.2]{LW22}. We are able to construct a direct bijection between the regions of $\cK_{2n}$ and a set of restricted permutations in $\S_{2n}$ that we denote as $\GI_{2n}$. Its definition given below involves the notion of {\it parity patterns}, which will be introduced in Section~\ref{sec:labeling}. 

\begin{Def}\label{def:G-I}
	For every $n\ge 1$, we let $\GI_{2n} :=\{ \pi \in \S_{2n}: \pi \text{ avoids parity patterns }\eE,\eO,\oO \}$. Alternatively, $\pi \in \GI_{2n}$ if and only if every ascent $\pi_i<\pi_{i+1}$ of $\pi$ satisfies that $\pi_i$ is odd and $\pi_{i+1}$ is even.
\end{Def}

For instance, $\GI_2=\{12,21\}$ and $\GI_4=\{1432,2143,3142,3214,3412,3421,4312,4321\}$. 

The following labeling is one out of four distinct labelings for the regions of $\cK_{2n}$, and could be viewed as the first main result of our paper. Given a hyperplane arrangement $\A$, denote by $\cR(\A)$ the set of regions carved out by all the hyperplanes in $\A$ and use $r(\A)$ to denote its cardinality.

\begin{theorem}\label{thm:label-I}
For any $n\ge 1$, the regions of $\cK_{2n}$ are bijectively labeled, via the mapping $\Lambda_{\mathrm{I}}$, by the permutations in $\GI_{2n}$. 
\end{theorem}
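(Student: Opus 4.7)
The plan is to define $\Lambda_{\mathrm{I}}$ by a standardization procedure. Recall that $\cK_{2n}$, introduced in \cite[Thm.~3.2]{LW22}, is the graphical arrangement attached to a certain bipartite (Ferrers-type) graph $G$ on vertex set $[2n]$; its hyperplanes all take the form $x_i = x_j$ for edges $\{i,j\}$ of $G$. For each region $R$, I would pick a generic interior point $p=(p_1,\dots,p_{2n})\in R$ (no two coordinates equal) and let $\Lambda_{\mathrm{I}}(R)=\pi$, where $\pi\in\S_{2n}$ encodes the resulting linear order on the coordinates. Because the signs of $x_i - x_j$ for $\{i,j\}$ an edge of $G$ are constant on each region, the induced acyclic orientation of $G$, and hence (after a parity-compatible tie-breaking rule is fixed for non-edges) the permutation $\pi$, is a region invariant.

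Next, the proof splits into three verifications. First, containment: I would show that $\pi=\Lambda_{\mathrm{I}}(R)\in\GI_{2n}$ by establishing that every ascent $\pi_i<\pi_{i+1}$ forces $\pi_i$ odd and $\pi_{i+1}$ even. If instead the parity pair of $(\pi_i,\pi_{i+1})$ were $\eE$, $\eO$, or $\oO$, I would exhibit an edge of $G$ whose orientation in $R$ contradicts $p_i<p_{i+1}$. Second, surjectivity: for each $\pi\in\GI_{2n}$ I would exhibit an explicit point $p^{\pi}$---for instance, a mild perturbation of the sequence $\pi_1,\dots,\pi_{2n}$---that lies off every hyperplane of $\cK_{2n}$ and satisfies $\Lambda_{\mathrm{I}}(R)=\pi$ for its enclosing region $R$. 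Third, injectivity: two distinct permutations in $\GI_{2n}$ induce incompatible sign patterns on the edges of $G$ and hence lie in different regions.

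The main obstacle will be the case analysis in the containment step. Matching each of the three forbidden ascent patterns to a concrete obstruction in $G$ requires a careful understanding of how adjacent positions in the one-line notation of $\pi$ connect to the bipartite edge structure. I expect the argument to hinge on a short ``connector chain'' of edges in $G$ linking the values $\pi_i$ and $\pi_{i+1}$ through intermediate vertices whose combined inequalities in $R$ rule out each wrong parity combination. A closely related subtlety is proving that the tie-breaking rule truly produces a region invariant; this will be settled alongside the containment argument by observing that any ambiguity corresponds to non-edges of $G$, where the parity rule imposes a unique natural choice. Once bijectivity of $\Lambda_{\mathrm{I}}$ is established, a sanity check via $r(\cK_{2n}) = |\GI_{2n}|$ follows from combining the chromatic-polynomial computation in \cite[Thm.~3.2]{LW22} with a separate enumeration of $\GI_{2n}$, which also dovetails with Theorem~\ref{thm:Lazar-Wachs}.
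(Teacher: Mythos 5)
Your proposal has the right high-level picture (regions of $\cK_{2n}$ are determined by the signs of $x_{2i-1}-x_{2j}$ on the edges of the bipartite graph, and the label should be a canonical linear extension of the resulting partial order), but it contains a genuine gap at the very first step: the map $\LI$ is never actually defined. Picking a generic point $p\in R$ and reading off the linear order of its coordinates does \emph{not} give a region invariant, because the hyperplanes of $\cK_{2n}$ only compare odd-indexed with even-indexed coordinates (and only for pairs $(2i-1,2j)$ with $i\le j$); the relative order of two odd-indexed coordinates, or of two even-indexed ones, or of a pair like $(x_3,x_2)$, varies as $p$ moves within a single region. For example, in $\cK_4$ the region with $x_1<x_2$, $x_1<x_4$, $x_3<x_4$ contains points realizing $1234$, $1342$, $3142$, and several other orders, of which only $3142$ lies in $\GI_4$. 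You acknowledge this by invoking a ``parity-compatible tie-breaking rule,'' but that rule is precisely the entire content of the theorem: one must show that each region admits exactly one linear extension lying in $\GI_{2n}$, and exhibit it. Deferring this to a rule that is ``settled alongside the containment argument'' leaves the definition, the containment, and the injectivity claims all resting on an unconstructed object. In the same vein, your containment step cannot work as described, since adjacent positions $\pi_i,\pi_{i+1}$ in a generic reading need not be joined by an edge of the graph, so there is no single edge whose orientation refutes a forbidden ascent; one needs chains of inequalities through intermediate letters, which is exactly the four-case parity analysis the paper carries out to prove uniqueness of the compatible permutation.

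For comparison, the paper sidesteps the ``choose a linear extension'' problem by building the compatible permutation recursively: it projects a region of $\cK_{2n+2}$ to one of $\cK_{2n}$, takes the inductively known label $\tilde\pi\in\GI_{2n}$, and runs an explicit insertion algorithm that places $2n+1,2n+2$ and repairs the so-called bad pairs so that the output stays in $\GI_{2n+2}$ and remains compatible with the region. Uniqueness (hence injectivity and surjectivity) is then proved by the case analysis on the parities of a disagreeing pair $(a,b)$. If you want to pursue your non-recursive route, the essential missing ingredient is a direct proof that every acyclic orientation of the graph $\Gamma_{2n}$ has a unique linear extension avoiding the parity patterns $\eE$, $\eO$, $\oO$ --- existence and uniqueness both require real arguments, and neither follows from genericity of a point in the region.
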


Going from a sequence of numbers to a triangular number array is often a good way of making new inroads on the study of the original sequence. For Genocchi numbers of the two kinds, one such triangular array is known as the {\it Seidel triangle}~\cite{sei77} and is well studied; see \cite{ES00} and \cite{BJS10} for related works. The first few rows of the Seidel triangle are shown in Table~\ref{Tab:Seidel triangle} below, wherein the numbers on the rightmost diagonal are the Genocchi numbers, while the leftmost column consists of the Genocchi medians, with each number occurring twice. The alternating arrows on the margin are to indicate the generation of a specific number by adding together the number preceding it in the same row (using the arrow for the order) and the number above it in the same column. For example, the entry at $(n,k)=(7,3)$ is $17=14+3$. This way of creating a number array is typically referred to as {\it boustrophedon}~\cite{PZ23}.

\begin{table}[ht]
    \centering
    \begin{tabular}{c|cccccccc}
    	$n \backslash k$ & & 1 & 2 & 3 & 4 & 5  \\ \hline
         1 & & 1 & & & & \\ 
         2 & & 1 & $\leftarrow$ & & & \\ 
    3&$\rightarrow$& 1 & 1 & & &  \\ 
         4 & & 2 & 1 & $\leftarrow$ & &  \\ 
    5&$\rightarrow$& 2 & 3 & 3 & &  \\ 
         6 & & 8 & 6 & 3 & $\leftarrow$ & \\
    7&$\rightarrow$& 8 & 14 & 17 & 17 &  \\ 
         8 & & 56 & 48 & 34 & 17 & $\leftarrow$ \\
    9&$\rightarrow$& 56 & 104 & 138 & 155 & 155 \\
         10 & & 608 & 552 & 448 & 310 & 155 & $\leftarrow$ \\
    \end{tabular}
    \medskip
    \caption{The first ten rows of the Seidel triangle} \label{Tab:Seidel triangle}
\end{table}

More precisely, the entry in the $n$-th row and $k$-th column of the Seidel triangle, denoted $S_{n,k}$, can be generated according to the following rules:
\begin{align}
	& S_{1,1} = 1,\quad S_{n,k} = 0 \text{ if $k\not\in [1,\lceil\frac{n}{2}\rceil]$,}\nonumber \\
	& S_{2n,k} = S_{2n-1,k}+S_{2n,k+1},\quad S_{2n+1,k} = S_{2n,k}+S_{2n+1,k-1}.\label{Seidel rule}
\end{align}
In particular, for every $n\ge 1$ we have
\begin{align}\label{id:Seidel two Genocchi}
& S_{2n,1} = h_{n-1}, \qquad S_{2n,n}=g_n.
\end{align}

Our second main result is to refine the counting of $\GI_{2n}$ by the first letter of the permutation and derive the following relations for the first Seidel-like triangle thus obtained; see Table~\ref{Tab:GI} for the first six rows of this triangle. Let us denote $\Gi_{2n}:=|\GI_{2n}|$, $\GI_{2n,k}:=\{\sigma\in\GI_{2n}: \sigma_1=k\}$, and $\Gi_{2n,k}:=|\GI_{2n,k}|$ for all $n\ge 1$ and $1\le k\le 2n$.

\begin{theorem} \label{thm:GIS}
For all $n\ge k\ge 1$, we have
\begin{align}
\Gi_{2n,1} &= \Gi_{2n,2},\label{id:1=2}\\
\Gi_{2n,3} &= 2(\Gi_{2n,1}+\Gi_{2n-2,1}),\label{E_3}\\
\Gi_{2n,2n} &= \sum_{i=1}^{2n-2} \Gi_{2n-2,i}=\Gi_{2n-2},\label{En2n=sum of above line} \\
2\Gi_{2n,2n-1} &= \sum_{i=1}^{2n} \Gi_{2n,i},\label{eq:GI_2n,2n-1}\\
\Gi_{2n,2k} &= S_{2n,n+1-k}. \label{id:GI and S}
\end{align}
\end{theorem}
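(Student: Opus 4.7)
The plan is to prove the identities in stages. The foundational tool is the following structural observation: in any $\pi \in \GI_{2n}$, an odd value at position $j \geq 2$ must be strictly less than its predecessor $\pi_{j-1}$, since an ascent into an odd value would violate the even-upper-end ascent rule. I would begin by applying this observation to obtain the short bijective proofs of \eqref{id:1=2}, \eqref{En2n=sum of above line}, and \eqref{eq:GI_2n,2n-1}, then induct to establish the main identity \eqref{id:GI and S}, and finally deduce \eqref{E_3} by case analysis on $\pi_2$.

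For \eqref{id:1=2}, tracking the position of $2$ in a permutation with $\pi_1 = 1$ forces $\pi_{2n} = 2$, while in $\GI_{2n,2}$ the observation applied to $1$ gives $\pi_2 = 1$; the bijection is then $1,\alpha,2 \leftrightarrow 2,1,\alpha$ where $\alpha$ is the common middle factor. For \eqref{En2n=sum of above line}, applying the observation to $2n-1$ when $\pi_1 = 2n$ forces $\pi_2 = 2n-1$, so the bijection $2n, 2n-1, \tau \leftrightarrow \tau \in \GI_{2n-2}$ yields $\Gi_{2n,2n} = \Gi_{2n-2}$, and partitioning $\GI_{2n-2}$ by first letter gives the sum expression. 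For \eqref{eq:GI_2n,2n-1} I would construct an involution on $\GI_{2n}$ swapping $\GI_{2n,2n-1}$ with its complement: given $\sigma = (2n-1), \alpha, 2n, \beta \in \GI_{2n,2n-1}$ (with $\alpha$ empty or ending in an odd value enabling the ascent into $2n$), send $\sigma \mapsto \alpha, 2n, 2n-1, \beta$; conversely, if $\sigma_1 \neq 2n-1$, then the observation forces $2n-1$ to sit immediately after $2n$, and the inverse is apparent. A routine check of ascent parities in both directions confirms the involution.

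For \eqref{id:GI and S}, I would induct on $n$, with \eqref{En2n=sum of above line} supplying the base column $\Gi_{2n,2n} = S_{2n,1}$. Translating the Seidel boustrophedon $S_{2n,j} = S_{2n-1,j} + S_{2n,j+1}$ through the target formula, unfolding $S_{2n-1,j} = S_{2n-2,j} + S_{2n-1,j-1}$, and applying the inductive hypothesis to rewrite $S_{2n-2,\cdot}$ as counts of $\GI_{2n-2,\cdot}$, reduces matters to the combinatorial recurrence
\[
\Gi_{2n, 2k+2} - \Gi_{2n, 2k} = \sum_{i=k}^{n-1} \Gi_{2n-2, 2i}.
\]
To prove this bijectively, one needs an injection $\GI_{2n,2k} \hookrightarrow \GI_{2n,2k+2}$ whose image complement decomposes into disjoint pieces matching each $\GI_{2n-2, 2i}$ for $i = k, \ldots, n-1$. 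Guided by the proof of \eqref{En2n=sum of above line} (where the distinguished adjacent pair $(2n, 2n-1)$ was stripped), I expect the desired injection to swap the leading value $2k \to 2k+2$ together with a local adjustment, and each complement piece to correspond to permutations admitting a strippable pair $(2i+2, 2i+1)$. Pinning down this explicit local surgery is the main obstacle of the proof.

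Finally, for \eqref{E_3} I would partition $\GI_{2n,3}$ according to the value of $\pi_2$, which the structural observation restricts to $\{1,2\} \cup \{4,6,\ldots,2n\}$. The cases $\pi_2 = 1$ and $\pi_2 = 2$ each produce a copy of $\GI_{2n-2,1}$ via prefix-stripping and a parity-respecting relabeling (in the second case the lemma applied to $1$ first forces $\pi_3 = 1$). The remaining case $\pi_2 \in \{4, 6, \ldots, 2n\}$ must account for $2\Gi_{2n,1}$; here I would further split the subclass according to whether $2$ immediately precedes $1$ in $\sigma$. One subclass bijects with $\GI_{2n,1}$ via swapping the values $1$ and $3$ (valid precisely because the predecessor of $1$ is $\geq 3$, so the resulting ascent parities are preserved), and the complementary subclass requires a second bijection that I expect to construct via an analogous move involving the adjacent pair $(2,1)$; this secondary matching is where additional care will be needed.
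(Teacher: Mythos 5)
Your handling of \eqref{id:1=2}, \eqref{En2n=sum of above line}, and \eqref{eq:GI_2n,2n-1} is correct and essentially identical to the paper's: your $1\alpha 2\leftrightarrow 21\alpha$ is the restriction of the paper's map $\eta$ from Proposition~\ref{prop:div}, your stripping of the forced prefix $(2n)(2n-1)$ is verbatim the paper's proof of \eqref{En2n=sum of above line}, and your involution shuttling $2n-1$ between the front and the slot after $2n$ is the paper's map $g$. (One small point: forcing $\pi_2=1$ when $\pi_1=2$, and forcing $2$ to sit at the end when $\pi_1=1$, uses the \emph{dual} of your stated observation --- an even letter other than the last must exceed its successor --- not the observation about odd letters itself.) Your route to \eqref{E_3}, partitioning $\GI_{2n,3}$ by $\pi_2\in\{1,2\}\cup\{4,6,\ldots\}$, is genuinely different from the paper's, which first halves $\GI_{2n,3}$ via $\eta$ and then maps the half not ending in $2$ onto $\GI_{2n,1}\cup\GI_{2n-2,1}$ by a block rearrangement around the factor $21$; your two cases $\pi_2\in\{1,2\}$ do each yield a copy of $\GI_{2n-2,1}$, and your $1\leftrightarrow 3$ value swap correctly handles the subclass with $\pi_2\ge 4$ in which $2$ does not immediately precede $1$.

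The genuine gap is the recurrence $\Gi_{2n,2k+2}-\Gi_{2n,2k}=\sum_{i=k}^{n-1}\Gi_{2n-2,2i}$, which you rightly identify as the crux of \eqref{id:GI and S} but leave as ``the main obstacle.'' This is exactly Theorem~\ref{En-even} of the paper, and its proof is not a short local surgery: the paper's bijection $\varphi_{n,k}$ first quotients by $\eta$ to the permutations ending in $2$, then runs a five-way case analysis on the letters following $2k+2$ (whether $\pi_2\le 2k-1$; $\pi_2=2k$ with two subcases on the letter after $2k+1$; or $\pi_2=2k+1$ with two subcases on $\pi_3$), including moving the pair $(2k+2)(2k+1)$ past a block of large letters, and closes with a table certifying that the images are disjoint and exhaust $\cGI_{2n,2k}\cup\bigl(\bigcup_{i=k}^{n-1}\cGI_{2n-2,2i}\bigr)$. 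Your guess that the map ``swaps the leading value $2k\to 2k+2$ together with a local adjustment'' captures only the easiest case, so the recurrence --- and hence \eqref{id:GI and S} --- remains unproved in your write-up. The unconstructed ``secondary matching'' in \eqref{E_3} is a smaller gap and is fillable (compose $\eta$, which pulls the $2$ out of the factor $21$ and appends it, with your $1\leftrightarrow 3$ swap). The rest of your plan for \eqref{id:GI and S} --- deriving \eqref{rec:S} from the boustrophedon rule and inducting --- is sound and coincides with the paper's.
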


\begin{theorem} \label{En-even}
	For all $n\ge 2,n>k\ge 0$, we have 
\begin{align}
\Gi_{2n,2k+2}=\Gi_{2n,2k}+\sum\limits_{i=k}^{n-1} \Gi_{2n-2,2i}.\label{id:GI even rec}
\end{align}
\end{theorem}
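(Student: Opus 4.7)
The plan is to deduce \eqref{id:GI even rec} purely algebraically from the identification \eqref{id:GI and S} in Theorem~\ref{thm:GIS} together with the boustrophedon recurrences \eqref{Seidel rule} defining the Seidel triangle. First I would use \eqref{id:GI and S} to substitute $\Gi_{2n,2k} = S_{2n, n+1-k}$ on the left and $\Gi_{2n-2,2i} = S_{2n-2, n-i}$ on the right; after reindexing the sum by $j = n-i$, the target identity \eqref{id:GI even rec} becomes
\begin{align*}
S_{2n, n-k} - S_{2n, n-k+1} = \sum_{j=1}^{n-k} S_{2n-2, j}.
\end{align*}
The boundary case $k=0$ poses no issue: any terms falling outside the support of $(S_{n,k})$, such as $S_{2n-2, n}$ or $\Gi_{2n-2, 0}$, vanish by the convention in \eqref{Seidel rule}.

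Next I would collapse the left-hand side using the even-row rule $S_{2n, j} = S_{2n-1, j} + S_{2n, j+1}$ from \eqref{Seidel rule} (applied at $j = n-k$), giving $S_{2n, n-k} - S_{2n, n-k+1} = S_{2n-1, n-k}$. It then remains to verify the auxiliary identity
\begin{align*}
S_{2n-1, m} = \sum_{j=1}^m S_{2n-2, j}, \quad 1 \le m \le n,
\end{align*}
which I would prove by induction on $m$ using the odd-row rule $S_{2n-1, m} = S_{2n-2, m} + S_{2n-1, m-1}$, with base case $S_{2n-1, 0} = 0$. Specializing at $m = n-k$ matches the two sides of the displayed equation above and proves \eqref{id:GI even rec}.

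The main obstacle here is essentially nonexistent once \eqref{id:GI and S} is taken for granted: the whole argument is a short bookkeeping exercise in boustrophedon combinatorics. A bijective proof directly on $\GI_{2n}$---for instance, conditioning on the second letter $\pi_2$ of $\pi \in \GI_{2n, 2k+2}$ and producing explicit bijections onto $\GI_{2n, 2k}$ and onto $\bigsqcup_{i=k}^{n-1} \GI_{2n-2, 2i}$---would be more illuminating, but it requires care, since naively relocating the letters $2k$ and $2k+2$ within a permutation can break the parity-pattern conditions at their new neighbors. I would pursue such a bijective enhancement only as a secondary goal; the algebraic route above already closes the theorem.
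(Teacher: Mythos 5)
Your algebraic manipulations are fine --- the reduction of \eqref{id:GI even rec} to the Seidel-triangle identity $S_{2n,n-k}-S_{2n,n+1-k}=\sum_{j=1}^{n-k}S_{2n-2,j}$ is exactly the computation the paper carries out to establish \eqref{rec:S} --- but the overall argument is circular as it stands. In this paper, \eqref{id:GI and S} is \emph{not} available as an independent input: the proof of Theorem~\ref{thm:GIS}, item (iv), explicitly reads ``Assuming Theorem~\ref{En-even}, \dots this recurrence relation combined with \eqref{rec:S} and the initial values $S_{2,1}=\Gi_{2,2}=1$ will establish \eqref{id:GI and S} by induction.'' That is, the logical order is: first prove Theorem~\ref{En-even}, then deduce \eqref{id:GI and S} by showing that $\Gi_{2n,2k}$ and $S_{2n,n+1-k}$ satisfy the same recurrence with the same initial values. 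The alternative, bijective proof of \eqref{id:GI and S} in Section~\ref{sec:bijs} does not rescue you either, since the map $\Phi$ there is built recursively out of the very bijection $\varphi$ whose existence is the content of Theorem~\ref{En-even}. So deriving \eqref{id:GI even rec} from \eqref{id:GI and S} assumes what is to be proved.

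What the paper actually does for Theorem~\ref{En-even} is precisely the ``secondary goal'' you set aside: it constructs an explicit bijection $\varphi_{n,k}:\GI_{2n,2k+2}\to\GI_{2n,2k}\cup\bigl(\bigcup_{i=k}^{n-1}\GI_{2n-2,2i}\bigr)$, treating $k=0$ and $k=1$ separately, reducing to the subsets $\cGI$ of permutations ending in $2$ via the involution $\eta$ of Proposition~\ref{prop:div}, and then conditioning on the second letter $\pi_2\in\{2k+1,\,2k,\,\le 2k-1\}$ with further subcases. Your worry that relocating the letters $2k$ and $2k+2$ can break the parity-pattern conditions is exactly the delicate point, and it is where the real work of the theorem lies. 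To repair your write-up you must either carry out such a bijection (or some other direct count of $\GI_{2n,2k+2}$), or else supply a proof of $\Gi_{2n,2k}=S_{2n,n+1-k}$ that does not pass through Theorem~\ref{En-even} --- for instance a direct, non-recursive bijection $\GI_{2n,2k}\to\DIII_{2n,2k}$ combined with the Burstein--Josuat-Verg\`es--Stromquist evaluation $|\DIII_{2n,2k}|=S_{2n,n+1-k}$. No such independent proof appears in the paper or in your proposal.
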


\begin{table}[ht]
    \renewcommand{\arraystretch}{1.1}
    \centering
    \begin{tabular}{c|cccccccccccccc}
        $n \backslash k$ & 1 & 2 & 3 & 4 & 5 & 6 & 7 & 8 & 9 & 10 & 11 & 12 \\ \hline
        1 & 1 &1  \\ 
        2 & 1 &1 & 4 & 2  \\ 
        3 & 3 &3 & 8 & 6 & 28 & 8  \\ 
        4 & 17 &17 & 40 & 34 & 92 & 48 & 304 & 56  \\ 
        5 & 155 &155 & 344 & 310 & 676 & 448 & 1472 & 552 & 4720 & 608  \\ 
        6 & 2073 & 2073 & 4456 & 4146 & 8060 & 6064 & 14848 & 7672 & 31472 & 8832 & 99136 & 9440 \\
    \end{tabular}
    \smallskip
    \caption{The Seidel-like triangle refining the set $\GI_{2n}$} \label{Tab:GI}
\end{table}

The rest of the paper is organized as follows. We collect definitions and some preliminary results in Section~\ref{sec:pre} to prepare ourselves for the proof of Theorem~\ref{thm:label-I} in Section~\ref{sec:labeling}, where three more sets of parity pattern avoiding permutations, namely $\GII_{2n}$, $\GIII_{2n}$, and $\GIV_{2n}$ are introduced, together with three labelings connecting each of them with the regions of $\cK_{2n}$. Section~\ref{sec:labeling} ends with a proof of Theorem~\ref{thm:GIS}, relying on the insights we gained from the first labeling $\LI$. Section~\ref{sec:bijs} features three bijections, the first of which is constructed recursively and it amounts to give a second proof of \eqref{id:GI and S}. The remaining two bijections are utilized to link our first set of permutations $\GI_{2n}$ to two previously considered permutation models in the literature. 


\section{Preliminaries}\label{sec:pre}
We collect in this section definitions and results that will be utilized in later sections. They are mainly for hyperplane arrangment and various kinds of restricted permutations.

A {\it (real) hyperplane arrangement} $\A\subseteq \bR^n$ is a finite collection of affine codimension one subspaces in $\bR^n$. Clearly the complement $\bR^n\backslash \A$ is disconnected, and we denote $r(\A)$ the number of connected components (called {\it regions}) of $\bR^n\setminus \A$. When the hyperplane arrangment in question is defined for every dimension, say $\A_n$ stands for the corresponding arrangment in dimension $n$, the natural question then arises as to whether or not the counting sequence $\{r(\A_n)\}_{n\ge 1}$ agrees with a known sequence in the literature. Better yet, bijective combinatorists would like to find certain well known combinatorial model with the correct cardinalities (i.e., given by $\{r(\A_n)\}_{n\ge 1}$) and understand how one can label each region using the objects from this model. For instance $\S_n$, the set of $n$-permutations, is a natural choice for labeling the regions of the {\it braid arrangement} in $\bR^n$:
$$\B_n:=\{x_i-x_j=0 : 1\le i < j\le n\},$$
and the celebrated Pak-Stanley labeling~\cite{sta96} for the regions of the {\it Shi arrangement} uses parking functions. For further interesting aspects of the hyperplane arrangements, the reader is referred to Stanley's notes~\cite{sta07} and book~\cite[Chapter~3.11]{sta11}.

Motivated by the work of Postnikov and Stanley~\cite{PS00}, wherein the {\it Linial arrangement} was bijectively labeled by semiacyclic tournaments, Hetyei~\cite{het19} considered a wider class called alternation acyclic tournaments and introduced the following {\it homogenized Linial arrangement}
$$\cH_{2n-1}:=\{x_i-x_j=y_i : 1\le i < j \le n+1\},$$
which is a hyperplane arrangement in 
$$\{(x_1,\ldots,x_{n+1},y_1,\ldots,y_n): (x_i,y_j)\in\bR^2\text{ for }(i,j)\in[n+1]\times[n]\}=\bR^{2n+1},$$
and he enumerated the regions of $\cH_{2n-1}$, relying on a formula derived in \cite[Corollary~5.5]{CKRT10} that connects the Genocchi medians with the so-called Legendre-Stirling numbers.
\begin{theorem}[{\cite[Thms. 4.3 \& 4.4]{het19}}]\label{thm:hetyei}
For all $n\ge 1$, we have $r(\cH_{2n-1})=h_n$.
\end{theorem}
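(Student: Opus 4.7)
The plan is to prove $r(\cH_{2n-1})=h_n$ via Athanasiadis's finite field method, which reduces the region count to a combinatorial point count over a finite field. For a prime $p$ sufficiently large, one has
\[
\chi_{\cH_{2n-1}}(p)=\bigl|\{(x_1,\ldots,x_{n+1},y_1,\ldots,y_n)\in\mathbb{F}_p^{2n+1}: x_i-x_j\ne y_i\text{ for all }1\le i<j\le n+1\}\bigr|,
\]
and Zaslavsky's formula gives $r(\cH_{2n-1})=(-1)^{2n+1}\chi_{\cH_{2n-1}}(-1)$. The strategy is to evaluate this point count combinatorially and recognize the resulting polynomial in $p$ as a Legendre--Stirling generating polynomial, which at $p=-1$ collapses to a median Genocchi number.

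Conditioning on the $x$-tuple $(x_1,\ldots,x_{n+1})\in\mathbb{F}_p^{n+1}$, the variables $y_i$ become independent, with $y_i$ free to take any of $p-d_i(x)$ values, where $d_i(x)=|\{x_i-x_j:j>i\}|$. Since $d_i(x)$ depends only on the coincidence pattern of the indices $j\ge i$, summing over $x$-tuples reduces to a weighted sum over linearly-order-refined set partitions of $[n+1]$, yielding a polynomial in $p$ whose coefficients are row-sensitive Stirling-type numbers. I would then identify these coefficients with the Legendre--Stirling numbers and invoke \cite[Corollary~5.5]{CKRT10}, which expresses $h_n$ as a Legendre--Stirling sum; evaluating at $p=-1$ and applying the sign correction of Zaslavsky's formula would finish the proof.

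The main obstacle, and the technical heart of the argument, is the middle step: extracting and recognizing the polynomial as a Legendre--Stirling generating polynomial. The bookkeeping is subtle because $d_i(x)$ depends on the row index $i$ and not only on the underlying partition of $[n+1]$, so the sum is not a straightforward exponential-type polynomial but requires a linear ordering on blocks that aligns the coincidence structure with the row index. In Hetyei's original treatment this is handled via alternation-acyclic tournaments on $[n+1]$, and I would follow that framework, proceeding by induction on $n$ to keep the bookkeeping tractable. A possible alternative, if the tournament language becomes cumbersome, would be to recast the problem via Whitney's theorem on the bond lattice of the associated bipartite Ferrers graph (a route subsequently pursued by Lazar and Wachs), computing the characteristic polynomial through an NBC-set enumeration rather than through the finite field count directly.
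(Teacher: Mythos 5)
Your setup is sound: over $\mathbb{F}_p$ the complement of $\cH_{2n-1}$ has cardinality $\sum_{x\in\mathbb{F}_p^{n+1}}\prod_{i=1}^{n}\bigl(p-d_i(x)\bigr)$, where $d_i(x)$ is the number of distinct values among $x_{i+1},\ldots,x_{n+1}$, and Zaslavsky's formula reduces the region count to $-\chi_{\cH_{2n-1}}(-1)$. But everything after that first display is a plan rather than a proof. The step you yourself identify as ``the technical heart'' --- organizing this sum over coincidence patterns into a Legendre--Stirling generating polynomial and showing that its evaluation at $p=-1$ yields $h_n$ via \cite[Corollary~5.5]{CKRT10} --- is exactly the content of Hetyei's Theorems 4.3 and 4.4, and it is not carried out. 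Announcing that you would ``follow that framework, proceeding by induction on $n$'' supplies neither the correspondence between the summands and alternation-acyclic tournaments nor the recurrence or closed form that makes the specialization at $p=-1$ collapse to a median Genocchi number; the fallback suggestion of an NBC-set computation on the bond lattice of the Ferrers graph is likewise only named, not executed. As written, the proposal establishes nothing beyond the reduction to a point count, so there is a genuine gap at its center.

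For context, the paper treats this statement as a quoted result of \cite{het19} and does not reprove it by the finite field route; instead it recovers it from its own machinery. The chain is: Proposition~\ref{prop:K2n} gives $r(\cH_{2n-1})=r(\cK_{2n})$; Theorem~\ref{thm:label-I} gives $r(\cK_{2n})=\Gi_{2n}$; identity \eqref{En2n=sum of above line} (with $n$ replaced by $n+1$) gives $\Gi_{2n}=\Gi_{2n+2,2n+2}$; and the $k=n+1$ case of \eqref{id:GI and S} together with \eqref{id:Seidel two Genocchi} gives $\Gi_{2n+2,2n+2}=S_{2n+2,1}=h_n$. If you want an argument that is self-contained relative to this paper, that is the route to complete; your proposal instead requires reconstructing a substantial portion of Hetyei's original paper, and the reconstruction is the part that is missing.
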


To produce an alternative proof of Theorem~\ref{thm:hetyei}, Lazar and Wachs~\cite[Thm.~3.2]{LW22} introduced another hyperplane arrangement in $\bR^{2n+1}=\{(x_1,\ldots,x_{2n+1}):x_i\in\bR\}$, namely
$$\cK_{2n}:=\{x_{2i-1}-x_{2j}=0 : 1\le i \le j \le n\},$$
and they established the following equivalence between $\cK_{2n}$ and $\cH_{2n-1}$.
\begin{proposition}\label{prop:K2n}
There exists a vector space isomorphism $\psi:\bR^{2n+1}\to\bR^{2n+1}$ that takes $\cK_{2n}$ to $\cH_{2n-1}$. Consequently, the regions of $\cK_{2n}$ corresponds bijectively to the regions of $\cH_{2n-1}$ and we have in particular $r(\cK_{2n})=r(\cH_{2n-1})$.
\end{proposition}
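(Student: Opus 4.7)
The plan is to construct $\psi$ explicitly as a linear change of coordinates on $\bR^{2n+1}$, then verify that it carries each $\cK$-hyperplane to an $\cH$-hyperplane bijectively. This is viable because both arrangements live in $\bR^{2n+1}$ and have the same number of hyperplanes $\binom{n+1}{2}$, and both are central, so no affine part is needed.

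First I would pin down how the coordinates should match up. Write the target space $\bR^{2n+1}$ with coordinates $(X_1,\ldots,X_{n+1},Y_1,\ldots,Y_n)$, so that an $\cH$-hyperplane has the form $\{X_i-X_j-Y_i=0\}$ for $1\le i<j\le n+1$. Since each $\cH$-equation involves three variables but each $\cK$-equation involves only two, $\psi$ must be arranged so that after substitution two of the three terms cancel. The natural pairing matches the $n+1$ odd-indexed $x$'s with the $X_i$'s and encodes the $n$ even-indexed $x$'s via the $Y_i$'s. Concretely, I would try
\[
X_i := x_{2n+3-2i}\ \ (1\le i\le n+1),\qquad Y_i := x_{2n+3-2i}-x_{2n+2-2i}\ \ (1\le i\le n).
\]
The index reversal $k\mapsto 2n+2-k$ on the $x$-side is not cosmetic: it is needed so that the $\cH$-constraint $i<j$ translates into the $\cK$-constraint $a\le b$ under the induced correspondence of hyperplane indices. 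A naive substitution such as $X_i=x_{2i-1}$, $Y_i=x_{2i-1}-x_{2i}$ yields the opposite inequality and hence does not match the two arrangements.

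Then I would carry out three short verifications: (i) $\psi$ is a linear bijection, via the explicit inverse $x_{2n+3-2i}=X_i$, $x_{2n+2-2i}=X_i-Y_i$; (ii) the preimage under $\psi$ of an $\cH$-hyperplane $\{X_i-X_j=Y_i\}$, once the two $x_{2n+3-2i}$ terms cancel, collapses to the $\cK$-hyperplane $\{x_{2a-1}=x_{2b}\}$ with $(a,b)=(n+2-j,\,n+1-i)$; (iii) this index transformation is a bijection between $\{(i,j):1\le i<j\le n+1\}$ and $\{(a,b):1\le a\le b\le n\}$. Since $\psi$ is then a linear homeomorphism of $\bR^{2n+1}$ sending the hyperplanes of $\cK_{2n}$ onto those of $\cH_{2n-1}$, it restricts to a bijection between the connected components of their complements, yielding $r(\cK_{2n})=r(\cH_{2n-1})$.

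The main obstacle is really only the first step: choosing $\psi$ correctly. The strict-versus-weak inequality mismatch between the two indexings must be resolved at the level of the substitution; without the index reversal, one matches the arrangements only up to an order-reversing reindexing that is not itself a bijection between the prescribed index sets. Once a correct $\psi$ is in hand, every remaining verification is routine algebra.
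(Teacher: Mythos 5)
Your proposal is correct, and it is essentially the expected argument: the paper itself gives no proof of Proposition~\ref{prop:K2n}, deferring to Lazar--Wachs \cite[Thm.~3.2]{LW22}, where the isomorphism is likewise an explicit linear change of coordinates pairing the odd-indexed $x$'s with the $X_i$'s and encoding the even-indexed ones through the $Y_i$'s. Your substitution checks out: the hyperplane $X_i-X_j=Y_i$ pulls back to $x_{2a-1}=x_{2b}$ with $(a,b)=(n+2-j,\,n+1-i)$, and this is indeed a bijection from $\{1\le i<j\le n+1\}$ onto $\{1\le a\le b\le n\}$, so no gaps remain.
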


One advantage of studying $\cK_{2n}$ over $\cH_{2n-1}$ is that the intersection lattice of $\cK_{2n}$ can be identified with the bond lattice of certain bipartite graph $\Gamma_{2n}$, which has $\{1,3,\ldots,2n-1\}\sqcup\{2,4,\ldots,2n\}$ as its set of vertices and $\{(2i-1,2j):1\le i\le j\le n\}$ as its set of edges. The graph of $\Gamma_8$ is depicted below. This transition enabled Lazar and Wachs to apply the so called Rota-Whitney formula for evaluating the M\"obius function of the bond lattice $\Pi_{\Gamma_{2n}}$, and eventually to recover Hetyei's Theorem~\ref{thm:hetyei} and further results along this line.

\begin{figure*}
\centering
    \begin{tikzpicture}[scale=0.4]
    \draw[-] (0,7) to (4,6);
    \draw[-] (0,7) to (4,4);
    \draw[-] (0,7) to (4,2);
    \draw[-] (0,7) to (4,0);
    \draw[-] (0,5) to (4,4);
    \draw[-] (0,5) to (4,2);
    \draw[-] (0,5) to (4,0);
    \draw[-] (0,3) to (4,2);
    \draw[-] (0,3) to (4,0);
    \draw[-] (0,1) to (4,0);
    \node at (0,7){$\bullet$};
    \node at (0,5){$\bullet$};
    \node at (0,3){$\bullet$};
    \node at (0,1){$\bullet$};
    \node at (4,6){$\bullet$};
    \node at (4,4){$\bullet$};
    \node at (4,2){$\bullet$};
    \node at (4,0){$\bullet$};
    \node at (-.5,7){$1$};
    \node at (-.5,5){$3$};
    \node at (-.5,3){$5$};
    \node at (-.5,1){$7$};
    \node at (4.5,6){$2$};
    \node at (4.5,4){$4$};
    \node at (4.5,2){$6$};
    \node at (4.5,0){$8$};
    \end{tikzpicture} 
\end{figure*}

Here we take a different look at $\Gamma_{2n}$ and the associated arrangement $\cK_{2n}$. Interpreted appropriately, the parity restrictions in their definitions highly resemble the even-odd drop condition for permutations in $\cE_{2n}$ (see Definition~\ref{def:five perm}(6) below). This motivates us to look for a more direct connection between $\cK_{2n}$ and $\cE_{2n}$ and leads us to Theorem~\ref{thm:label-I}. Before we explain this further, let us shift our attention to various classes of restricted permutations that are related to Genocchi numbers and Genocchi medians. To give more context, we recall seven types of permutations. 

\begin{Def}\label{def:five perm}
For every $n\ge 1$, we introduce the following seven kinds of permutation subject to various parity restrictions.
    \begin{enumerate}
        \item A permutation $\sigma\in\S_{2n}$ is called a {\it Dumont permutation of the first kind} if it satisfies the condition that if $\sigma_i$ is even then $\sigma_i>\sigma_{i+1}$ and $i<2n$, otherwise $\sigma_i<\sigma_{i+1}$ or $i=2n$. The set of such permutations is denoted as $\D^{\mathrm{I}}_{2n}$.
        \item A permutation $\sigma\in\S_{2n}$ is called a {\it Dumont permutation of the second kind} if $\sigma_{2i}<2i$ and $\sigma_{2i-1}\ge 2i-1$ for all $i\in [n]$. The set of such permutations is denoted as $\D^{\mathrm{II}}_{2n}$.
        \item A permutation $\sigma\in\S_{2n}$ is called a {\it Dumont permutation of the third kind} if $\sigma_i>\sigma_{i+1}$ then $\sigma_i$ and $\sigma_{i+1}$ are both even. The set of such permutations is denoted as $\D^{\mathrm{III}}_{2n}$.
        \item A permutation $\sigma\in\S_{2n}$ is called a {\it Dumont permutation of the fourth kind} if $\sigma_i<i$ then $i$ and $\sigma_i$ are both even. The set of such permutations is denoted as $\D^{\mathrm{IV}}_{2n}$.
        \item A permutation $\sigma\in\S_{2n}$ is called a {\it $D$-permutation} if $\sigma_{2i}\le 2i$ and $\sigma_{2i-1}\ge 2i-1$ for all $i\in [n]$. The set of such permutations is denoted as $\fD_{2n}$.
        \item Denote the permutations of $[2n]$ with only even-odd drops by $\cE_{2n}$ and call them the {\it $E$-permutations}.
        \item Denote by $\cX_{2n}$ the set of permutations $\sigma\in\S_{2n}$ satisfying that if $\sigma_i>\sigma_{i+1}$ then $\sigma_i$ is even and $\sigma_{i+1}$ is odd.
    \end{enumerate}
\end{Def}
The enumeration of all seven classes of permutations above gives rise to the following relations.
\begin{align*}
&|\D^{\mathrm{I}}_{2n}|=|\D^{\mathrm{II}}_{2n}|=|\D^{\mathrm{III}}_{2n}|=|\D^{\mathrm{IV}}_{2n}|=g_{n+1},\\
&|\fD_{2n}|=|\cE_{2n}|=|\cX_{2n}|=h_n.
\end{align*}

Note that Dumont permutations of the first two kinds (items (1) and (2) above) were introduced by Dumont~\cite{dum74} and are alluded to in the introduction; items (3) and (4) were defined by Burstein, Josuat-Verg\`es, and Stromquist~\cite{BJS10}; items (5) and (6) were due to Lazar and Wachs~\cite{LW22}; item (7) was considered by both Eu-Fu-Lai-Lo~\cite{EFLL22} and Pan-Zeng~\cite{PZ23} and is seen to be in bijection with item (6) via the famed Foata's first fundamental transformation (see \cite{FS70,PZ23} and \cite[Prop.~2.11]{EFLL22}). Moreover, a permutation $\sigma=\sigma_1\cdots\sigma_{2n}$ has only odd-even ascents precisely when its reversal $\sigma^{\mathrm{r}}:=\sigma_{2n}\cdots\sigma_1$ has only even-odd descents, i.e., $\sigma\in \GI_{2n}$ if and only if $\sigma^{\mathrm{r}}\in\cX_{2n}$.

We end this section with a diagram that sets our Theorem~\ref{thm:label-I} in the context of existing results. It is evident from Fig.~\ref{relation diagram} that in view of Theorem~\ref{thm:label-I}, the two enumerative results due respectively to Hetyei (Theorem~\ref{thm:hetyei}) and Lazar-Wachs (Theorem~\ref{thm:Lazar-Wachs}) are essentially equivalent. Here $\Psi$ refers to a variant of Foata's first fundamental transformation~\cite[Chap.~10.2]{lot97} that we describe next. 

Take any permutation $\pi\in\cE_{2n}$, we write its inverse in cycle notation $\pi^{-1}=c_1c_2\cdots c_k$ such that each cycle $c_i$ starts with its minimal element and $\min(c_1)>\min(c_2)>\cdots>\min(c_k)$. Then we remove the parentheses of the cycles and let the new permutation (in its one-line notation) be $\Psi(\pi)$. For example, if $\pi=324165\in\cE_{6}$, then $\pi^{-1}=(56)(2)(143)$ and $\Psi(\pi)=562143\in\GI_6$. To recover the preimage under $\Psi$, simply note that the left-to-right minima of $\Psi(\pi)$ are precisely the minimal elements in each cycle of $\pi$. Furthermore, we note in passing that under this bijection $\Psi$, a permutation $\pi\in\cE_{2n}$ has only one cycle, if and only if its image $\Psi(\pi)$ begins with $1$. Combining this fact with \eqref{id:Seidel two Genocchi}, \eqref{id:1=2}, and the $k=1$ case of \eqref{id:GI and S}, we are rewarded with yet another proof of Lazar and Wachs's Conjecture~\ref{conj:Lazar-Wachs}. Similarly, combining the $k=n$ case of \eqref{id:GI and S} with \eqref{En2n=sum of above line}, Theorem~\ref{thm:label-I}, and Proposition~\ref{prop:K2n}, we can deduce Hetyei's Theorem~\ref{thm:hetyei}.

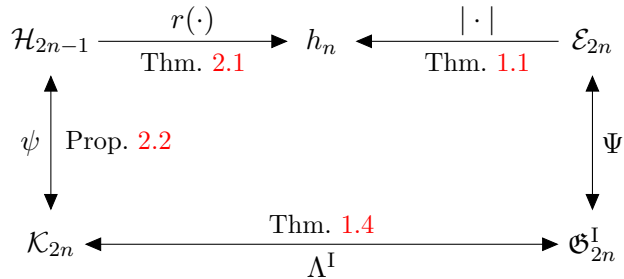
\begin{figure}
\begin{tikzpicture}[scale=0.9]
\draw(0,0) node{$\cK_{2n}$};
\draw(8,0) node{$\GI_{2n}$};
\draw(0,3) node{$\cH_{2n-1}$};
\draw(8,3) node{$\cE_{2n}$};
\draw(4,3) node{$h_n$};
\draw(8.3,1.5) node{$\Psi$};

\draw(2.1,3.3) node{$r(\cdot)$};
\draw(6.3,3.3) node{$|\cdot|$};
\draw(2.1,2.7) node{\small Thm.~\ref{thm:hetyei}};
\draw(6.3,2.7) node{\small Thm.~\ref{thm:Lazar-Wachs}};
\draw(4,0.3) node{\small Thm.~\ref{thm:label-I}};
\draw(4,-.3) node{$\LI$};
\draw(1,1.5) node{\small Prop.~\ref{prop:K2n}};
\draw(-.3,1.5) node{$\psi$};

\draw[>=triangle 45, <->] (0.5,0) -- (7.5,0);
\draw[>=triangle 45, ->] (0.7,3) -- (3.5,3);
\draw[>=triangle 45, ->] (7.5,3) -- (4.5,3);
\draw[>=triangle 45, <->] (0,0.5) -- (0,2.5);
\draw[>=triangle 45, <->] (8,0.5) -- (8,2.5);
\end{tikzpicture}
\caption{The equivalence between Theorems~\ref{thm:hetyei} and \ref{thm:Lazar-Wachs} \label{relation diagram}}
\end{figure}

\section{Parity pattern and the labelings with all four models}\label{sec:labeling}
\subsection{Four sets of parity pattern avoiding permutations}\label{sub:la.def}
Given a permutation $\pi=\pi_1\cdots\pi_n$, each pair $\pi_i>\pi_{i+1}$ (resp.~$\pi_i<\pi_{i+1}$) is called a {\it descent} (resp.~an {\it ascent}) of $\pi$. Let $\sigma$ be a permutation of length $k$. We say $\pi$ \textbf{contains} the pattern $\sigma$ if in $\pi$ there exists a subsequence of length $k$ that is order isomorphic to $\sigma$, otherwise we say $\pi$ \textbf{avoids} $\sigma$. From this perspective, descents and ascents are simply consecutive $21$ and $12$ patterns, respectively. For background and further information on the topic of pattern avoidance, we refer the reader to Kitaev's monograph~\cite{kit11} and the voluminous amount of references therein. 

We are going to restrict the parities of both numbers $\pi_i$ and $\pi_{i+1}$ that form either a descent or an ascent.  By a {\it parity pattern} we refer to one of the following eight consecutive patterns:
$$\eE,~\eO,~\oE,~\oO,~\Ee,~\Eo,~\Oe,\text{ and }\Oo.$$
Here the parity pattern, say $\eO$, refers to an ascent $\pi_i<\pi_{i+1}$ in $\pi$ such that $\pi_i$ is even and $\pi_{i+1}$ is odd. The remaining seven patterns are understood analogously. Notice that Kitaev and Remmel~\cite{KR07} has previously considered similar parity restrictions but they only restricted the parity of one of $\pi_i$ and $\pi_{i+1}$. Their work does involve Genocchi numbers though, which potentially gives rise to bijective questions. ({\SF Todo 2})

Recall from Definition~\ref{def:G-I} that a permutation is in $\GI_{2n}$ if and only if it avoids simultaneously the three parity patterns $\eE$, $\eO$, and $\oO$. Motivated by this definition as well as our first labeling $\LI$, we next introduce three more subsets of $\S_{2n}$ that are enumerated by the median Genocchi numbers. Their definitions not only vary in the choices of parity patterns to be avoided, but also require some additional restrictions. Note that however, $\eO$ is one common pattern to be avoided by all four subsets of permutations.


	

\begin{Def}\label{def:GII}
	For every $n\geq 1$, let $\GII_{2n}$ denote the set of permutations $\sigma$ in $\S_{2n}$ subject to the following conditions. 
	\begin{enumerate}
		\item $\sigma$ avoids the parity patterns $\Ee,\eO,\oO$;
		\item the only occurrence of $\Oe$ pattern that $\sigma$ allows is as follows: $\sigma_1>\cdots>\sigma_m>\sigma_{m+1}$, where $(\sigma_m,\sigma_{m+1})$ is an $\Oe$ pattern and all $\sigma_i~(1 \le i \le m)$ are odd numbers. We may call such $\Oe$ pattern an {\it initial $\Oe$ pattern}.
	\end{enumerate}  
\end{Def}

For example, the set $\GII_{4}$ includes the following eight permutations, among which $3214$ and $3241$ each has an initial $\Oe$ pattern. 
$$\GII_{4}=\{1243,2143,2431,3124,3214,3241,3412,4312\}.$$

To characterize our third kind of parity pattern avoiding permutations, we introduce the following generalized notion of $\eO$ pattern. 


\begin{Def}\label{def:gen eO}
	Given a permutation $\sigma$ and an integer $d\ge 0$, a pair $(\sigma_i,\sigma_j)$ is said to form an $\eO$ \emph{pattern at distance $d$} of $\sigma$, denoted as $\eO_d$, if the following conditions are satisfied.
	\begin{enumerate}
		\item $j=i+d+1$;
		\item $\sigma_i\equiv \sigma_l+1\equiv 0 \pmod 2$ for all $i<l\le j$;
		\item $\sigma_i<\sigma_j$.
	\end{enumerate}
\end{Def}
Note that the $\eO$ pattern at distance $0$ is the original (consecutive) $\eO$ pattern.

\begin{Def}
	For every $n\ge 1$, let $\GIII_{2n}$ denote the set of permutations $\sigma\in\S_{2n}$ such that the following conditions are satisfied.
	\begin{enumerate}
		\item $\sigma$ avoids the parity patterns $\Oo$ and $\eE$;
		\item $\sigma$ avoids the generalized parity pattern $\eO_d$ for any $d\ge 0$.
	\end{enumerate}
\end{Def}
For example, the set $\GIII_{4}$ consists of the following eight permutations, while the permutation $2134$ is not included since $(2,3)$ forms an $\eO_1$ pattern in it.
$$\GIII_{4}=\{1342,1432,2143,3214,3412,3421,4132,4321\}.$$

Our last model of permutations that are enumerated by the median Genocchi numbers needs the following extensions of the $\Oo$ and $\Ee$ patterns, in a similar vein as Definition~\ref{def:gen eO} extends the $\eO$ pattern. We begin with some notation concerning factors in a word. Given any word $w$, a \emph{factor} refers to a subword of $w$ that is formed by consecutive letters. In particular, a single letter is a factor of length $1$. A factor $\alpha$ is called \emph{even} (resp.~\emph{odd}) if it consists of only even (resps. odd) letters, and in that case we shall write $\alpha=\mathrm{E}$ (resp.~$\alpha=\mathrm{O}$). Let $\max(\alpha)$ (resp.~$\min(\alpha)$) denote the maximal (resp.~minimal) letter contained in $\alpha$.

\begin{Def}\label{def:gen Oo and Ee}
	Given a permutation $\sigma$ and an integer $d\ge 0$, a pair $(\sigma_i,\sigma_j)$ is said to form an $\Oo$ \emph{pattern at distance $d$} of $\sigma$, denoted as $\Oo_d$, if the following conditions are satisfied.
	\begin{enumerate}
		\item $\sigma=\cdots\sigma_i\alpha\sigma_j\cdots$;
		\item $\sigma_i\equiv\sigma_j \equiv 1\pmod 2$ and the factor $\alpha=\mathrm{E}$ is of length $d$;
		\item $\sigma_i>\sigma_j$ if $d=0$ and $\sigma_i>\max(\alpha)>\sigma_j$ if $d>0$.
	\end{enumerate}
	A pair $(\sigma_i,\sigma_j)$ is said to form an $\Ee$ \emph{pattern at distance $d$} of $\sigma$, denoted as $\Ee_d$, if the following conditions are satisfied.
	\begin{enumerate}
		\item $\sigma=\cdots\sigma_i\alpha\sigma_j\cdots$;
		\item $\sigma_i\equiv\sigma_j \equiv 0 \pmod 2$ and the factor $\alpha=\mathrm{O}$ is of length $d$;
		\item $\sigma_i>\sigma_j$ if $d=0$ and $\sigma_i>\min(\alpha)>\sigma_j$ if $d>0$.
	\end{enumerate}
\end{Def}
It is worth noting that $\Oo_0$ (resp. $\Ee_0$) is simply the original $\Oo$ (resp. $\Ee$) pattern.

\begin{Def}
	For every $n\ge 1$, let $\GIV_{2n}$ denote the set of permutations $\sigma\in\S_{2n}$ such that the following conditions are satisfied.
	\begin{enumerate}
		\item $\sigma$ avoids the parity pattern $\eO$;
		\item $\sigma$ avoids the generalized parity patterns $\Oo_d$ and $\Ee_d$ for any $d\ge 0$.
	\end{enumerate}
\end{Def}

For example, there are eight permutations in $\GIV_{4}$ as shown below, while the permutation $1432$ is not included since $(4,2)$ forms an $\Ee_1$ pattern in it.
$$\GIV_{4}=\{1243,1324,2134,2143,2413,3241,3412,4132\}.$$

All three newly introduced sets of permutations could be utilized to label the regions of $\cK_{2n}$, as indicated by the next theorem. From this perspective, they arise as natural companions to the first set $\GI_{2n}$, even though their definitions are admittedly more involved.

\begin{theorem}\label{thm:three more labelings}
For any $n\ge 1$, the regions of $\cK_{2n}$ are bijectively labeled, via the following three mappings, by the permutations in $\GII_{2n}$, $\GIII_{2n}$, and $\GIV_{2n}$, respectively.
\begin{align*}
\LII: &~\cR(\cK_{2n}) \to \GII_{2n},\\
\LIII: &~\cR(\cK_{2n}) \to \GIII_{2n},\\
\LIV: &~\cR(\cK_{2n}) \to \GIV_{2n}.
\end{align*}
\end{theorem}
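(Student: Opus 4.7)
The plan is to reduce the theorem to three purely combinatorial bijection problems by leveraging the first labeling $\LI$ already established in Theorem~\ref{thm:label-I}. For each $\bullet \in \{\mathrm{II}, \mathrm{III}, \mathrm{IV}\}$, I would construct an explicit bijection $\phi^\bullet \colon \GI_{2n} \to \fG^\bullet_{2n}$ and then set $\Lambda^\bullet := \phi^\bullet \circ \LI$. Bijectivity of each $\Lambda^\bullet$ is then automatic, and as a useful by-product one recovers $|\GII_{2n}|=|\GIII_{2n}|=|\GIV_{2n}|=h_n$. An alternative, more geometric, route is to define each $\Lambda^\bullet$ directly from the sign data that encodes a region of $\cK_{2n}$, mirroring the construction of $\LI$ but reading the induced bipartite tournament on $\{1,3,\ldots,2n-1\}\sqcup\{2,4,\ldots,2n\}$ in three different prescribed orders tailored to each target class.

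For $\phi^{\mathrm{II}}$ I would focus on the leading block of $\pi \in \GI_{2n}$. Any such $\pi$ begins with a maximal decreasing run $\pi_1 > \cdots > \pi_m$ whose letters $\pi_2, \ldots, \pi_m$ are forced to be odd by the definition of $\GI_{2n}$. The bijection should either leave $\pi$ alone, when the prefix already obeys the restrictions of $\GII_{2n}$, or swap an even letter into this initial odd block to install the uniquely allowed initial $\Oe$ pattern, while simultaneously adjusting the tail to destroy any newly created $\Ee$ occurrence. The inverse locates the initial $\Oe$ pattern, if any, and undoes the swap. For $\phi^{\mathrm{III}}$ and $\phi^{\mathrm{IV}}$, the target sets forbid the \emph{generalized} patterns $\eO_d$, $\Oo_d$, $\Ee_d$ for all $d \ge 0$, so one must enforce a long-range condition rather than a local one. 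A natural strategy is to decompose $\pi \in \GI_{2n}$ along its ascent positions into maximal descending factors and to record separately the subword of odd letters, the subword of even letters, and the shuffle pattern between them; then reshuffle under a new rule dictated by avoidance of the generalized patterns.

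The main obstacle I foresee is verifying that the images indeed avoid \emph{every} generalized pattern, and not just the consecutive ones. For $\phi^{\mathrm{II}}$ the subtlety is confirming that the permissible initial exception does not propagate into any interior $\Oe$ occurrence. For $\phi^{\mathrm{III}}$ and $\phi^{\mathrm{IV}}$, the difficulty is that each forbidden pattern $\eO_d$, $\Oo_d$, or $\Ee_d$ involves a distinguished intermediate factor of specified parity and arbitrary length $d$, so ruling them out uniformly requires a global structural invariant preserved by the bijection, rather than the local sign-comparison that sufficed for $\GI_{2n}$. If the indirect approach becomes unwieldy, I would fall back on the geometric definition and establish well-definedness by the same pattern-avoidance checks, in parallel to the proof of Theorem~\ref{thm:label-I}.
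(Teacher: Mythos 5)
There is a genuine gap: your proposal is a strategy outline rather than a proof, and the constructions that would carry the entire weight of the argument are never supplied. Neither $\phi^{\mathrm{II}}$ (``swap an even letter into the initial odd block while adjusting the tail''), nor $\phi^{\mathrm{III}}$ and $\phi^{\mathrm{IV}}$ (``reshuffle under a new rule dictated by avoidance of the generalized patterns'') is a well-defined map, and you yourself flag the decisive difficulty --- verifying avoidance of \emph{every} $\eO_d$, $\Oo_d$, $\Ee_d$ rather than just the consecutive patterns --- without resolving it. Moreover, even if such bijections $\phi^\bullet:\GI_{2n}\to\fG^\bullet_{2n}$ were produced, the composition $\phi^\bullet\circ\LI$ would only yield \emph{some} bijection $\cR(\cK_{2n})\to\fG^\bullet_{2n}$; the labelings the paper constructs are the compatibility-based ones, in which each region $R$ is sent to the unique permutation of the given class that is a linear extension of the poset $P(R)$. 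Establishing that such a compatible representative exists and is unique in each of $\GII_{2n}$, $\GIII_{2n}$, $\GIV_{2n}$ is the actual content of the theorem, and nothing in your reduction addresses it.

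Your fallback route is the one the paper takes, but it is far from a routine transcription of the proof of Theorem~\ref{thm:label-I}. Each class requires its own insertion algorithm with a tailored notion of ``bad pair'': for $\GIII$ the bad-pair claim only holds when an even letter separates $i$ and $j$, and the displaced letters must be reordered via $\Re(vu)$ to avoid $\Oo$; for $\GIV$ one needs \emph{e-bad} pairs of \emph{even} integers with the inequalities reversed, a different projection $\proj'$ that deletes the first two coordinates (so that $1$ and $2$, not $2n+1$ and $2n+2$, are inserted), and a three-way decomposition $\tilde{w}=\alpha\beta^{-}\beta^{+}\gamma^{-}k\gamma^{+}\sigma^{-}\sigma^{+}\tau$ governing where the displaced block goes. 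The verification that the generalized patterns are avoided, and the four-case uniqueness argument (which is what delivers both injectivity and surjectivity), must each be redone per class. Since none of this is present, the proposal cannot be credited as a proof of the statement.
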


\subsection{The four labelings}\label{sub:la.model-1}
 Given a point $\bx=(x_1,\ldots,x_{2n},x_{2n+1})$ that belongs to a region, say $R$, of $\cK_{2n}$, note that the coordinate $x_{2n+1}$ is unrestricted and can be ignored. For the remaining $2n$ coordinates, according to the region $R$ we know the sign of $x_i-x_j$ for each pair $(i,j)$ such that $i<j$ and $i+1\equiv j\equiv 0\pmod 2$. In what follows, we refer to such a pair as an ``$\oE$ pair''. 
 
 \begin{Def}\label{def:compatible}
 	A permutation $\pi$ in $\S_{2n}$ is said to be {\it compatible} with a given region $R\in\cR(\cK_{2n})$, if for every $\oE$ pair $(i,j)$ and $\bx\in R$, we have
 	$$\pi^{-1}(i)<\pi^{-1}(j) \text{ if and only if } x_i<x_j.$$ 
 \end{Def}
 
\begin{remark}
Each region $R\in\cR(\cK_{2n})$ uniquely determines a poset, say $P(R)$, on the elements $\{x_1,\ldots,x_{2n}\}$, and thus can be represented using the Hasse diagram associated with $P(R)$; see Fig.~\ref{Hasse diagram} for the Hasse diagram of a region in $\cR(\cK_{18})$. By Definition~\ref{def:compatible}, a permutation is compatible with $R$ if and only if it is a linear extension\footnote{To be precise, for each $1\le i\le 2n$, we identify the element $x_i$ in the poset with the letter $i$ in the permutation.} of the poset $P(R)$. From this perspective, each of the four sets $\GI_{2n}$, $\GII_{2n}$, $\GIII_{2n}$, and $\GIV_{2n}$ could be viewed as a collection of uniquely chosen representatives of the linear extensions of $P(R)$, when $R$ runs through all regions in $\cR(\cK_{2n})$.
\end{remark}

 
 \begin{proof}[Proof of Theorem~\ref{thm:label-I}]
 	We are going to construct recursively a bijection
 	$$\LI:=\LI_n: \cR(\cK_{2n}) \to \GI_{2n}.$$
 	Given a region $R$ of $\cK_{2n}$, we aim to show that there exists a unique permutation $\pi\in\GI_{2n}$ that is compatible with $R$, and let $\LI_n(R):=\pi$. The proof goes by induction on $n$. 
 	
 	For $n=1$, there is only one hyperplane $x_1-x_2=0$ in $\cK_2$ which divides $\bR^3$ into two regions, namely
 	$$R_1=\{\bx\in\bR^3: x_1-x_2<0\}, \text{ and } R_2=\{\bx\in\bR^3: x_1-x_2>0\}.$$
 	To satisfy the compatibility, we are forced to take $\LI_1(R_1)=12$ and $\LI_1(R_2)=21$. Now suppose $\LI_n$ has already been constructed for a certain $n\ge 1$, we proceed to define $\LI_{n+1}$ which completes the following commutative diagram.
 	
 	\begin{figure*}[h!]
 		\begin{tikzpicture}[scale=0.8]
 			\draw(0,0) node{$\cR(\cK_{2n+2})$};
 			\draw(5,0) node{$\GI_{2n+2}$};
 			\draw(0,3) node{$\cR(\cK_{2n})$};
 			\draw(5,3) node{$\GI_{2n}$};
 			\draw(5.7,1.5) node{IA-I};
 			
 			\draw(2.5,3.3) node{$\LI_n$};
 			\draw(2.5,0.3) node{$\LI_{n+1}$};
 			\draw(-.7,1.5) node{proj};

 			\draw[>=triangle 45, ->, dashed] (1,0) -- (4.3,0);
 			\draw[>=triangle 45, ->] (1,3) -- (4.3,3);
 			\draw[>=triangle 45, <-] (5,0.5) -- (5,2.5);
 			\draw[>=triangle 45, ->] (0,0.5) -- (0,2.5);
 		\end{tikzpicture}
 	\end{figure*}
 	
 	Fix a given region $R\in\cR(\cK_{2n+2})$, we first project\footnote{Note that going through this projection, we are deprived of the information on the magnitude comparisons between the coordinates $x_{2n+2}$ and $x_j$ for all $j=1,3,\ldots,2n+1$. This observation explains why we also need $\bx$ as an input for the insertion algorithm.} it to a region $\tilde{R}\in\cR(\cK_{2n})$ as follows: 
 	\begin{align*}
 		\proj:~R &\to \tilde{R} \\
 		\bx=(x_1,\cdots,x_{2n+1},x_{2n+2},x_{2n+3}) &\mapsto \tilde{\bx}=(x_1,\cdots,x_{2n+1}).
 	\end{align*}
 	Now by inductive hypothesis we can find the unique permutation $\tilde{\pi}:=\LI_{n}(\tilde{R})\in\GI_{2n}$ such that $\tilde{\pi}$ is compatible with $\tilde{R}$. Next, we insert $2n+1$ and $2n+2$ into $\tilde{\pi}$ while rearranging smaller letters when needed to derive a unique permutation $\pi\in\GI_{2n+2}$. Finally, we set $\LI_{n+1}(R):=\pi$. 
 	
 	Pick any vector $\bx\in R$, there are two cases to consider and the core of the construction is described as the {\it Insertion Algorithm I} and boxed below.
 	\begin{itemize}
 		\item If $x_{2n+1}<x_{2n+2}$ in $\bx$, then input $y=2n+2$ and run the insertion algorithm I to produce the word $w$. We define the permutation $\pi:=(2n+1)w$.
 		\item If $x_{2n+1}>x_{2n+2}$ in $\bx$, then input the concatenated $y=(2n+2)(2n+1)$ and run the insertion algorithm I to produce the word $w$. We define the permutation $\pi:=w$.
 	\end{itemize}
 	
 	A pair of odd integers $(i,j)$ is said to be {\it bad} with respect to $\bx$, if $i$ precedes $j$ in the one-line expression of $\tilde{\pi}$ but $x_j<x_{2n+2}<x_i$ in $\bx$. The following facts about bad pairs will be useful for later discussion.

 	\begin{claim}\label{cla:bad pair}
 	Suppose the pair $(i,j)$ is bad in $\tilde{\pi}$ with respect to $\bx$, then we claim that
 	\begin{enumerate}
 		\item $i>j$;
 		\item for every even number $k$ that appears between $i$ and $j$, we have $i>k$.
 	\end{enumerate}
 	\end{claim}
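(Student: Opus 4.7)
The plan is to exploit two ingredients: the compatibility of $\tilde{\pi}$ with $\tilde{R}$ from Definition~\ref{def:compatible}, which pins down the $x$-coordinate orderings for every $\oE$ pair, together with the structural characterization of $\GI_{2n}$ that every ascent of $\tilde{\pi}$ must travel from an odd letter to an even letter. The bad-pair hypothesis supplies the strict inequalities $x_j<x_{2n+2}<x_i$ in $\bx$, which I aim to contradict whenever either conclusion of the claim fails.

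For part (1), I would argue by contradiction, assuming $i<j$. Let $p$ denote the position of $j$ in $\tilde{\pi}$ and walk leftward. Since $j$ is odd and $\GI_{2n}$ forbids any ascent terminating at an odd letter, the predecessor must satisfy $\tilde{\pi}_{p-1}>j$. So long as each successive predecessor is itself odd, the same ``no ascent into an odd letter'' argument keeps applying, producing a strictly increasing staircase as we move backwards. This traversal terminates in one of two ways: either (a) we hit the first even letter $e=\tilde{\pi}_q$ at some position $q<p$, or (b) we reach position $1$ having seen only odd letters. In case (b), every letter preceding $j$ is odd and greater than $j$, so in particular $i>j$, a contradiction. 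In case (a), the staircase $\tilde{\pi}_{q+1},\ldots,\tilde{\pi}_{p-1}$ consists of odd letters exceeding $j$, and $e$ itself is even and larger than everything in the staircase; since $i$ is odd and $i<j$, $i$ cannot sit inside the staircase, so it must appear strictly before position $q$. Then $i<j<e$, so $(i,e)$ and $(j,e)$ are both $\oE$ pairs with $i$ preceding $e$ and $e$ preceding $j$, and compatibility yields $x_i<x_e<x_j$, contradicting $x_j<x_i$.

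For part (2), using part (1) I may assume $j<i$. Suppose for contradiction that some even letter $k$ lying strictly between $i$ and $j$ in $\tilde{\pi}$ satisfies $k>i$. Then $k>i>j$, so both $(i,k)$ and $(j,k)$ are $\oE$ pairs. Since $i$ precedes $k$ which precedes $j$, compatibility delivers $x_i<x_k$ and $x_j>x_k$, hence $x_i<x_j$, once again clashing with $x_j<x_i$.

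The hard part will be the backwards staircase traversal in part (1): one has to check carefully that the $\GI_{2n}$ constraint—no ascent into an odd letter—either exposes an even pivot $e$ sitting above $j$ or else forces every preceding letter to be a larger odd letter. Once that pivot is located (or ruled out in case (b)), both conclusions reduce to a short compatibility comparison, and part (2) becomes essentially immediate from (1).
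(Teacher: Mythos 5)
Your argument is correct and takes essentially the same approach as the paper: in both, part (1) is settled by locating an even entry $e$ between $i$ and $j$ with $e>j>i$ (your backward ``staircase'' walk is just an explicit justification of the existence that the paper infers directly from the avoidance of $\oO$ and $\eO$), and then compatibility forces $x_i<x_e<x_j$, contradicting $x_j<x_{2n+2}<x_i$; part (2) is the identical comparison with $k$ in place of $e$.
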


 	To see item (1), assume on the contrary that $i<j$, recall that $\tilde{\pi}$ avoids parity patterns $\oO$ and $\eO$, hence there exists at least one even entry, say $k$, between $i$ and $j$ such that $k>j$. But this implies that $x_i<x_k<x_j$ in both $\bx$ and $\tilde{\bx}$, which contradicts with $x_j<x_{2n+2}<x_i$. For item (2), if $i<k$ then combining with item (1) we see $k$ is an even number larger than both $i$ and $j$, hence we arrive at the same contradictory inequality $x_i<x_k<x_j$.

 	The algorithm below tells us how to remedy all the bad pairs by rearranging certain odd entries in $\tilde{\pi}$, and then insert $2n+1,2n+2$ into the adjusted permutation. 
 	\begin{framed}
 		\begin{center}
 			{\bf Insertion Algorithm I (IA-I)}
 		\end{center}
 		\begin{itemize}
 			\item[Initiate] Input $y$, $\tilde{\pi}$, and $\bx$.
 			\item[Step 1] If there are no bad pairs in $\tilde{\pi}$ with respect to $\bx$, then let $\tilde{w}:=\tilde{\pi}$ and jump to Step 4. Otherwise, find the rightmost $j$ in $\tilde{\pi}$ such that $(i,j)$ forms a bad pair with respect to $\bx$ for a certain $i$.
 			\item[Step 2] Find all bad pairs ending with $j$, say $(i_1,j),\ldots,(i_m,j)$ with $i_1>\cdots>i_m$. Remove $i_1,\ldots,i_m$ from $\tilde{\pi}$ and form the subword $u:=i_1 i_2 \cdots i_m$ by concatenation.
 			\item[Step 3] Insert $u$ to the immediate right of $j$ to get $\tilde{w}:=\cdots ju\cdots$.
 			\item[Step 4] Find the rightmost odd number $k$ in $\tilde{w}$ such that $x_k<x_{2n+2}$. Insert $y$ to the immediate right of $k$ to obtain $w:=\cdots ky\cdots$. Note that if the algorithm goes through Steps 2 and 3, then $k=j$. In the particular case that no such $k$ exists, simply set $w:=y\tilde{w}$.
 			\item[End] Output $w$.
 		\end{itemize}
 	\end{framed}
 	To finish the proof, one needs to verify the following facts.
 	\begin{itemize}
 		\item $\pi\in\GI_{2n+2}$. It suffices to show that the newly introduced ascents after applying IA-I are all $\oE$ ascents. There are three scenarios that potential new ascents could arise from.
 		\begin{enumerate}
 			\item When $i_1,\cdots,i_m$ are removed in step 2. Suppose that for a certain $1\le t\le m$, $i_t$ is removed from $\cdots ai_tb\cdots$ and $ab$ does form a new ascent. Recall that $i_t$ is odd and $\tilde{\pi}$ avoids both $\oO$ and $\eO$, hence $i_t<a<b$ and $b$ must be even. But now $b$ is an even sitting inbetween $i_t$ and $j$, according to Claim~\ref{cla:bad pair} (2) we should have $i_t>b$. This is a contradiction.
 			
 			\item When $u=i_1\cdots i_m$ is inserted in step 3, say as $\cdots j i_1\cdots i_m l\cdots$, then $ji_1$ and $i_ml$ may form new ascents. For $ji_1$, it will be separated by the $y$ inserted in step 4\footnote{Notice that if steps 2 and 3 were executed, then the odd number $k$ in step 4 is precisely the number $j$ found in step 2.}. While for $i_ml$, by Claim~\ref{cla:bad pair} (1) we have $i_m>j$, so if $i_ml$ does form an ascent then $jl$ is already an ascent in $\tilde{\pi}$, which forces the ascent top $l$ to be even. We see indeed $i_ml$ is an $\oE$ ascent that is allowed.

 			\item When $y$ is inserted after $k$ in step 4, a new ascent may be formed between $k$ and $y$. But no matter $y=2n+2$ or $y=(2n+2)(2n+1)$, it must begin with the even number $2n+2$, so this new ascent $k(2n+2)$ is a legal $\oE$ ascent.
 		\end{enumerate}

 		\item $\pi$ is compatible with the region $R$. The algorithm clearly ensures that for any $1\le i\le n+1$, $\pi^{-1}(2i-1)<\pi^{-1}(2n+2)$ if and only if $x_{2i-1}<x_{2n+2}$ in $\bx$. It remains to verify such compatibility for the pair $(2i-1,2j)$, $1\le i\le j\le n$. This holds true for $\tilde{\pi}$ by inductive hypothesis. And thanks to Claim~\ref{cla:bad pair} (2), we know that although certain $2i-1$ might have been moved by the algorithm, the relative orders between $2i-1$ and $2j$ (i.e., which precedes which) remain the same in both $\tilde{\pi}$ and $\pi$.

 		\item $\LI_{n+1}$ is injective. Suppose $R$ and $R'$ are two distinct regions in $\cR(\cK_{2n+2})$, then there exists at least one hyperplane, say $x_{2i-1}-x_{2j}=0$ for certain $1\le i\le j\le n+1$, which separates $R$ and $R'$. Applying compatibility, we see that the relative orders between $2i-1$ and $2j$ are opposite in $\LI_{n+1}(R)$ and $\LI_{n+1}(R')$, therefore $\LI_{n+1}(R)\neq \LI_{n+1}(R')$.
 		
 		\item The permutation in $\GI_{2n+2}$ that is compatible with the given region $R$ is unique. Suppose $\pi,\pi'\in\GI_{2n+2}$ are two distinct permutations both being compatible with $R$. Find a pair $(a,b)$ with $a<b$ such that $a$ precedes $b$ in $\pi$ while $b$ precedes $a$ in $\pi'$. There are four cases to be discussed, conditioning on the parities of $a$ and $b$.
 		\begin{itemize}
 			\item[(i)] $a\equiv b\equiv 0\pmod 2$. Since $\pi$ avoids $\eE$, there must be some odd numbers lying between $a$ and $b$ in $\pi$. Suppose $\pi=\cdots aa_1\cdots a_t c\cdots b\cdots$, where $a_1\equiv\cdots\equiv a_t\equiv c+1\equiv 0\pmod 2$, then $a>a_1>\cdots>a_t>c$ due to the avoidance of $\eE$ and $\eO$, thus in region $R$ we have $x_a<x_c<x_b$, rendering $\pi'$ incompatible with $R$, a contradiction.
 			\item[(ii)] $a+1\equiv b+1\equiv 0\pmod 2$. Since $\pi$ avoids $\oO$, there must be some even numbers lying between $a$ and $b$ in $\pi$. Suppose $\pi=\cdots a\cdots c b_1\cdots b_t b\cdots$, where $b_1+1\equiv\cdots\equiv b_t+1\equiv c\equiv 0\pmod 2$, then $c>b_1>\cdots>b_t>b$ due to the avoidance of $\oO$ and $\eO$, thus in region $R$ we have $x_a<x_c<x_b$, rendering $\pi'$ incompatible with $R$, a contradiction.
 			\item[(iii)] $a\equiv b+1\equiv 0\pmod 2$. Analogous to case (i), we find the nearest odd number to the right of $a$ and suppose $\pi=\cdots aa_1\cdots a_t c\cdots b\cdots$, where $a_1\equiv\cdots\equiv a_t\equiv c+1\equiv 0\pmod 2$, again we have $a>a_1>\cdots>a_t>c$ due to the avoidance of $\eE$ and $\eO$. Applying compatibility, we see $x_a<x_c$ in region $R$ and $a$ should precede $c$ in $\pi'$ as well. This in turn implies that $b$ precedes $c$ in $\pi'$. Now we could view $(c,b)$ as a pair in case (ii), giving rise to a similar contradiction.
 			\item[(iv)] $a+1\equiv b\equiv 0\pmod 2$. This case cannot happen, since the relative order between $a$ and $b$ is predetermined by the side of the hyperplane $x_a-x_b=0$ that $R$ appears on, so this order should be consistent in both $\pi$ and $\pi'$.
 		\end{itemize}

 		\item The uniqueness from the last item also implies that $\LI_{n+1}$ is surjective. Indeed, given a permutation $\sigma\in\GI_{2n+2}$, utilizing the relative orders between each pair $(2i-1,2j)$, $1\le i\le j\le n+1$, one finds a unique region $R\in \cR(\cK_{2n+2})$ that is compatible with $\sigma$. Now we have shown that $\LI_{n+1}(R)\in\GI_{2n+2}$ and $\LI_{n+1}(R)$ is compatible with $R$. Relying on the uniqueness from the last item, we deduce that $\LI_{n+1}(R)=\sigma$.

 	\end{itemize}
 \end{proof}
 
 \begin{example}\label{ex-I}
 	To illustrate the first labeling $\LI$ constructed in the proof of Theorem~\ref{thm:label-I}, we start with a specific region $R\in\cR(\cK_{18})$ given below, and go through IA-I step-by-step to find its image $\pi$ in $\GI_{18}$.
 	\begin{align*}
 	R=\{ & \bx\in\bR^{19}: x_{2i-1}-x_{2j_1}<0, x_{2i-1}-x_{2j_2}>0,\\
 	& j_1=1,2,4,6,7,8,9,~j_2=3,5 \text{ when } i=1,\\
 	& j_1=2,6,7,8,~j_2=3,4,5,9 \text{ when } i=2,\\
 	& j_1=4,5,6,7,8,9,~j_2=3 \text{ when } i=3,\\
 	& j_1=4,6,7,8,~j_2=5,9 \text{ when } i=4,\\
 	& j_1=8,~j_2=5,6,7,9 \text{ when } i=5,\\
 	& j_1=6,8,~j_2=7,9\text{ when } i=6,\\
 	& j_1\in \varnothing,~j_2=7,8,9\text{ when }i=7,\\
 	& j_1=8,~j_2=9 \text{ when } i=8,\\
 	& j_1\in \varnothing,~j_2=9 \text{ when } i=9.\}
 	\end{align*}
 	This region can be succinctly expressed using the Hasse diagram in Fig.~\ref{Hasse diagram}, where $x_a$ connected with $x_b$ and lying above $x_b$ means that $x_a>x_b$.

 	To walk the reader through the construction of IA-I, we will show the whole process starting from $n=1$. At the beginning, we will give the Hasse diagram of $R$, as shown in Fig.~\ref{Hasse diagram}, which makes it easy to find the information on the magnitude comparisons between the coordinates.
 	\begin{figure*}[h!]
 		\centering
 		\begin{tikzpicture}[scale=0.4]
 			\draw[-] (0,0) to (0,19.5);
 			\draw[-] (0,4.5) to (4,7);
 			\draw[-] (0,6) to (4,8.5);
 			\draw[-] (0,10.5) to (4,13);
 			\draw[-] (0,6) to (-4,14);
 			\draw[-] (-4,14) to (0,18);
 			\node at (0,0){$\bullet$};
 			\node at (0,1.5){$\bullet$};
 			\node at (0,3){$\bullet$};
 			\node at (0,4.5){$\bullet$};
 			\node at (0,6){$\bullet$};
 			\node at (0,7.5){$\bullet$};
 			\node at (0,9){$\bullet$};
 			\node at (0,10.5){$\bullet$};
 			\node at (0,12){$\bullet$};
 			\node at (0,13.5){$\bullet$};
 			\node at (0,15){$\bullet$};
 			\node at (0,16.5){$\bullet$};
 			\node at (0,18){$\bullet$};
 			\node at (0,19.5){$\bullet$};
 			\node at (4,7){$\bullet$};
 			\node at (4,8.5){$\bullet$};
 			\node at (4,13){$\bullet$};
 			\node at (-4,14){$\bullet$};
 			\node at (1,0){$x_6$};
 			\node at (1,1.5){$x_5$};
 			\node at (1,3){$x_{10}$};
 			\node at (-1,4.5){$x_1$};
 			\node at (-1,6){$x_{18}$};
 			\node at (1,7.5){$x_7$};
 			\node at (1,9){$x_8$};
 			\node at (-1,10.5){$x_3$};
 			\node at (1,12){$x_{14}$};
 			\node at (1,13.5){$x_{11}$};
 			\node at (1,15){$x_{12}$};
 			\node at (1,16.5){$x_9$};
 			\node at (1,18){$x_{16}$};
 			\node at (1,19.5){$x_{13}$};
 			\node at (-5,14){$x_{15}$};
 			\node at (5,13){$x_4$};
 			\node at (5,8.5){$x_{17}$};
 			\node at (5,7){$x_2$};
 		\end{tikzpicture}
 		\caption{The Hasse diagram of a region $R\in\cR(\cK_{18})$ \label{Hasse diagram}}
 	\end{figure*}
 	
 	Define $\tilde{R}_i=\proj^{8-i}(R)$ for $i=0,\ldots,7$ and $\tilde{\pi}^{(j)}=\LI_{j}(\tilde{R}_{j-1})$ for $j=1, \ldots, 8$. One reads from the diagram above that $\tilde{R}_0=\{\bx\in\bR^3:x_1-x_2<0\}$, so initially $\tilde{\pi}^{(1)}=\LI_1(\tilde{R}_0)=12$, and we will insert $3$ and $4$. We see $x_3<x_4$ in the diagram so we set $y=4$ and start IA-I. There are no bad pairs in $\tilde{\pi}^{(1)}$ with respect to $\bx$ and $1$ is the rightmost $k$ in $\tilde{\pi}^{(1)}$ such that $x_k<x_4$, so 
 	$$k=1,~w=142,\text{ and }\tilde{\pi}^{(2)}=3142.$$
 	Next, we insert $5$ and $6$. The diagram shows $x_5>x_6$, so we set $y=65$, there are no bad pairs and there exists no $k$ such that $x_{k}<x_6$, so
 	$$\tilde{\pi}^{(3)}=w=653142.$$
 	We continue to insert $7$ and $8$. The diagram shows $x_7<x_8$, so $y=8$. Now $(3,1)$ is a bad pair since $3$ precedes $1$ in $\tilde{\pi}^{(3)}$ while the diagram indicates that $x_1<x_8<x_3$. Consequently, applying IA-I we derive
 	$$j=k=1,~u=3,~\tilde{w}=651342,~w=6518342,\text{ and }\tilde{\pi}^{(4)}=76518342.$$
 	The remaining numbers can be inserted analogously using IA-I so we omit the details. The key parameters for each round of IA-I are listed below to reflect the incremental changes. 
 	\begin{align*}
 	& y=(10)9,~j=k=5,~u=7,~\tilde{w}=65718342, \text{ and }\tilde{\pi}^{(5)}=w=65(10)9718342.\\
 	& y=(12),~j=k=3,~u=9,~\tilde{w}=65(10)7183942,\\
 	&\qquad \qquad \qquad w=65(10)7183(12)942,\text{ and }\tilde{\pi}^{(6)}=(11)65(10)7183(12)942.\\
	& y=(14)(13),~j=k=3,~u=(11),~\tilde{w}=65(10)7183(11)(12)942, \text{ and}\\
	& \qquad \qquad \qquad \tilde{\pi}^{(7)}=w=65(10)7183(14)(13)(11)(12)942.\\
	& y=(16),~j=k=9,~u=(13),~\tilde{w}=65(10)7183(14)(11)(12)9(13)42,\\
	& \qquad w=65(10)7183(14)(11)(12)9(16)(13)42,\text{ and }\tilde{\pi}^{(8)}=(15)65(10)7183(14)(11)(12)9(16)(13)42.\\
	& y=(18)(17),~j=k=1,~u=(15)7,~\tilde{w}=65(10)1(15)783(14)(11)(12)9(16)(13)42, \text{ and }\\
	& \qquad \pi=w=65(10)1(18)(17)(15)783(14)(11)(12)9(16)(13)42.
 	\end{align*}
 	
 \end{example}
 	
Next we move on to the proof of Theorem~\ref{thm:three more labelings}. To that end, we need to construct three more labelings $\LII$, $\LIII$, and $\LIV$. The process largely resembles that of $\LI$. 

For ease of writing, we introduce the following notation.

\begin{Def}
	 Given a word $\alpha=a_1a_2\cdots a_n$ composed of positive integers, we define $\mathrm{Re}(\alpha)$ as the new word obtained by reordering the letters $a_1,\cdots,a_n$ from the smallest to the largest. 
\end{Def}
 
 \begin{proof}[Proof of Theorem~\ref{thm:three more labelings}]
 	Since the construction of these three labelings is similar to that of $\LI$, we only sketch the ideas and emphasize the differences between these insertion algorithms. 
 	
 	For $\LII:=\LII_n: \cR(\cK_{2n}) \to \GII_{2n}$, it suffices to define $\LII_{n+1}$ which completes the following diagram. 

 	\begin{figure*}[h!]
 		\begin{tikzpicture}[scale=0.8]
 			\draw(0,0) node{$\cR(\cK_{2n+2})$};
 			\draw(5,0) node{$\GII_{2n+2}$};
 			\draw(0,3) node{$\cR(\cK_{2n})$};
 			\draw(5,3) node{$\GII_{2n}$};
 			\draw(5.7,1.5) node{IA-II};
 			
 			\draw(2.5,3.3) node{$\LII_n$};
 			\draw(2.5,0.3) node{$\LII_{n+1}$};
 			\draw(-.7,1.5) node{proj};

 			\draw[>=triangle 45, ->, dashed] (1,0) -- (4.3,0);
 			\draw[>=triangle 45, ->] (1,3) -- (4.3,3);
 			\draw[>=triangle 45, <-] (5,0.5) -- (5,2.5);
 			\draw[>=triangle 45, ->] (0,0.5) -- (0,2.5);
 		\end{tikzpicture}
 	\end{figure*}
 	
 	Similarly, by inductive hypothesis we can find the unique permutation $\tilde{\pi}:=\LII_{n}(\tilde{R})\in\GII_{2n}$. Next, we insert $2n+1$ and $2n+2$ into $\tilde{\pi}$ with modifications on smaller letters when needed to derive a unique permutation $\pi\in\GII_{2n+2}$. Finally, we set $\LII_{n+1}(R):=\pi$. 
 	
 	Note that Claim~\ref{cla:bad pair} for bad pairs still holds since $\tilde{\pi}$ avoids parity patterns $\oO$ and $\eO$.
 	
 	Pick any vector $\bx\in R$, there are also two cases to consider and the {\it insertion algorithm II} is boxed below as well.
    \begin{itemize}
    	\item If $x_{2n+1}<x_{2n+2}$ in $\bx$, then input $y=2n+2$ and run the insertion algorithm II to produce the word $w$. Let the permutation $\pi:=(2n+1)w$.
    	\item If $x_{2n+1}>x_{2n+2}$ in $\bx$, then input the joint letters $y=(2n+2)(2n+1)$ and run the insertion algorithm II to produce the word $w$. Let the permutation $\pi:=w$.
    \end{itemize}
    \begin{framed}
    	\begin{center}
    		{\bf Insertion Algorithm II (IA-II)}
    	\end{center}
    	\begin{itemize}
    		\item[Initiate] Input $y$, $\tilde{\pi}$, and $\bx$.
    		\item[Step 1] If there are no bad pairs in $\tilde{\pi}$ with respect to $\bx$, then let $\tilde{w}:=\tilde{\pi}$ and jump to Step 4. Otherwise, find the rightmost $j$ in $\tilde{\pi}$ such that $(i,j)$ forms a bad pair with respect to $\bx$ for a certain $i$.
    		\item[Step 2] Find all bad pairs ending with $j$, say $(i_1,j),\ldots,(i_m,j)$ with $i_1>\cdots>i_m$. Remove $i_1,\ldots,i_m$ from $\tilde{\pi}$ and form the subword $u:=i_1 i_2 \cdots i_m$ by concatenation.
    		\item[Step 3] Scan all the letters to the right of $j$, find the leftmost $z$ such that either 1) $z$ is odd; or 2) $z$ is even and greater than $i_m$. Insert $u$ to the immediate left of $z$ to get $\tilde{w}:=\cdots uz\cdots$. In the particular case that no such $z$ exists, simply append $u$ to the right end to get $\tilde{w}$. 
    		\item[Step 4] Find the leftmost odd number $k$ in $\tilde{w}$ such that $x_k>x_{2n+2}$. Insert $y$ into $\tilde{w}$ to the immediate left of $k$ to obtain $w:=\cdots yk\cdots$. Note that if the algorithm goes through Steps 2 and 3, then $k=i_1$. In the particular case that no such $k$ exists, simply take $w:=\tilde{w}y$.
    		\item[End] Output $w$.
    	\end{itemize}
    \end{framed}
    
    To finish the proof for $\LII$, one needs to verify the following facts, while the other details are similar to those in the proof of Theorem~\ref{thm:label-I} and thus are omitted.
    \begin{itemize}
    	\item $\pi\in\GII_{2n+2}$. It suffices to show that no steps in the algorithm will incur an ascent or a descent that is not allowed in $\GII$. There are three scenarios that potential new ascents or descents could arise from. 
    	\begin{enumerate}
    		\item When $i_1,\cdots,i_m$ are removed in step 2. Suppose that for a certain $1\le t\le m$, $i_t$ is removed from $\cdots ai_tb\cdots$ and $ab$ forms a new consecutive pair. Recall that $i_t$ is odd and $\tilde{\pi}$ avoids $\eO$ and $\oO$, hence $a>i_t$ and there are two subcases:
    		\begin{itemize}
    			\item $b$ is even. Then according to Claim~\ref{cla:bad pair} (2) we should have $a>i_t>b$. But now $i_tb$ becomes an $\Oe$ pattern in $\tilde{\pi}$ and it is not initial (in view of the even letter $a$ preceding $i_t$). This leads to a contradiction according to Definition~\ref{def:GII} (2).
    			\item $b$ is odd. Then $i_t>b$ since $\tilde{\pi}$ avoids $\oO$. Consequently, $ab$ forms either an $\Oo$ pattern or an $\Eo$ pattern, both are allowed.
    		\end{itemize}
    		
    		\item When $u=i_1\cdots i_m$ is inserted in step 3, say as 
    		$$\cdots j~\overbracket{\cdots l}^{v}~\overbracket{i_1\cdots i_m}^{u}~z\cdots,$$ 
    		then $li_1$ and $i_mz$ form new consecutive pairs. For $li_1$, it will be separated by the $y$ inserted in step 4. While for $i_mz$, if $z>i_m$ is even, then $i_mz$ forms an $\oE$ pattern which is allowed; otherwise $z$ must be odd, then by our choice of $z$ in step 3, we see that either $v=\varnothing$ and $i_m>j>z$ since $\tilde{\pi}$ avoids $\oO$, or $v$ consists of even letters smaller than $i_m$, and $\tilde{\pi}$ avoiding $\eO$ implies that $i_m>l>z$. In both cases we see indeed $i_mz$ forms an $\Oo$ pattern which is allowed.
    		
    		\item When $y$ is inserted before $k$ or after $\tilde{w}$ in step 4, the permutation is still legal since the immediate left of $y$ is always smaller than $y$ and $y$ begins with the even number $2n+2$, while the immediate right of $y$ is an smaller odd or non-existent. 
    	\end{enumerate}
    	
    	\item The permutation in $\GII_{2n+2}$ that is compatible with a given region $R$ is unique. Suppose $\pi,\pi'\in\GII_{2n+2}$ are two distinct permutations both compatible with $R$. Find a pair $(a,b)$ with $a<b$ such that $a$ precedes $b$ in $\pi$ while $b$ precedes $a$ in $\pi'$. We elaborate on only two cases, since the other two cases can be discussed in a similar fashion as in the proof of Theorem~\ref{thm:label-I} cases (ii) and (iv).
    	\begin{itemize}
    		\item[(i)] $a\equiv b\equiv 0\pmod 2$. Since $\pi'$ avoids $\Ee$, there must be some odd numbers lying between $b$ and $a$ in $\pi'$. Suppose $\pi'=\cdots b\cdots ca_1\cdots a_ta\cdots$, where $a_1\equiv\cdots\equiv a_t\equiv c+1\equiv 0\pmod 2$, then $a>a_t>\cdots>a_1>c$ due to the avoidance of $\Ee$ and non-initial $\Oe$, thus in region $R$ we have $x_b<x_c<x_a$, rendering $\pi$ incompatible with $R$, a contradiction.
    		\item[(ii)] $a\equiv b+1\equiv 0\pmod 2$. Since $\pi$ avoids $\eO$, we find the nearest even number to the left of $b$ and suppose $\pi=\cdots a\cdots cb_1\cdots b_tb\cdots$, where $b_1+1\equiv\cdots\equiv b_t+1\equiv c\equiv 0\pmod 2$, again we have $c>b_1>\cdots>b_t>b$ due to the avoidance of $\oO$ and $\eO$. Applying compatibility, we see $x_c<x_b$ in region $R$ and $c$ should precede $b$ in $\pi'$ as well. This in turn implies that $c$ precedes $a$ in $\pi'$. Now we could view $(a,c)$ as a pair in case (i), giving rise to a similar contradiction.
    	\end{itemize}
    \end{itemize} 
    
    For $\LIII:=\LIII_n: \cR(\cK_{2n}) \to \GIII_{2n}$, it suffices to define $\LIII_{n+1}$ which completes the following diagram. 
    
    \begin{figure*}[h!]
    	\begin{tikzpicture}[scale=0.8]
    		\draw(0,0) node{$\cR(\cK_{2n+2})$};
    		\draw(5,0) node{$\GIII_{2n+2}$};
    		\draw(0,3) node{$\cR(\cK_{2n})$};
    		\draw(5,3) node{$\GIII_{2n}$};
    		\draw(5.7,1.5) node{IA-III};
    		
    		\draw(2.5,3.3) node{$\LIII_n$};
    		\draw(2.5,0.3) node{$\LIII_{n+1}$};
    		\draw(-.7,1.5) node{proj};

    		\draw[>=triangle 45, ->, dashed] (1,0) -- (4.3,0);
    		\draw[>=triangle 45, ->] (1,3) -- (4.3,3);
    		\draw[>=triangle 45, <-] (5,0.5) -- (5,2.5);
    		\draw[>=triangle 45, ->] (0,0.5) -- (0,2.5);
    	\end{tikzpicture}
    \end{figure*}
    
    Similarly, by inductive hypothesis we can find the unique permutation $\tilde{\pi}:=\LIII_{n}(\tilde{R})\in\GIII_{2n}$. Next, we insert $2n+1$ and $2n+2$ into $\tilde{\pi}$ with necessary modifications when needed to derive a unique permutation $\pi\in\GIII_{2n+2}$. Finally, we set $\LIII_{n+1}(R):=\pi$. 
    
    Pick any vector $\bx\in R$, we input $y=2n+2$ and run the insertion algorithm III as boxed below. To insert $2n+1$ and derive $\pi$, there are two cases to consider. 
    \begin{itemize}
    	\item If $x_{2n+1}<x_{2n+2}$ in $\bx$, then we obtain the permutation $\pi$ from $w$ by inserting $2n+1$ to the immediate left of the leftmost even number in $w$.
    	\item If $x_{2n+1}>x_{2n+2}$ in $\bx$, then we obtain the permutation $\pi$ from $w$ by scanning the numbers to the right of $2n+2$, and inserting $2n+1$ to the immediate left of the leftmost even number we found.
    \end{itemize}
    
    But the facts about bad pairs are slightly different from what they were before, so we make a new claim to accomodate the current situation.
    
    \begin{claim}\label{cla:bad pair III}
    	Suppose the pair $(i,j)$ is bad in $\tilde{\pi}$ with respect to $\bx$, and there is at least one even number between $i$ and $j$. Then we claim that
    	\begin{enumerate}
    		\item $i>j$;
    		\item for every even number $k$ that appears between $i$ and $j$, we have $i>k$.
    	\end{enumerate}
    \end{claim}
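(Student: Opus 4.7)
The plan is to mimic the contradiction argument in the proof of Claim~\ref{cla:bad pair}, now leveraging (i) the compatibility of $\tilde{\pi}$ with $\tilde{R}$ applied to $\oE$ pairs, and (ii) the pattern avoidances defining $\GIII_{2n}$, most notably the non-consecutive $\eO_d$ avoidance from Definition~\ref{def:gen eO}. The hypothesis that the subword $\alpha$ strictly between $i$ and $j$ in $\tilde{\pi}$ contains at least one even letter is precisely what allows us to build a three-term chain of inequalities between $x_i$, some $x_k$ with $k$ an even entry of $\alpha$, and $x_j$, which will conflict with the bad-pair condition $x_j<x_{2n+2}<x_i$.

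To establish item~(1), I would suppose for contradiction that $i<j$ and let $k'$ be the \emph{rightmost} even letter appearing in $\alpha$ (it exists by hypothesis). By the rightmost choice, between $k'$ and $j$ lies a (possibly empty) run $o_1,\dots,o_t$ of odd letters, so that $\tilde{\pi}=\cdots k'\,o_1\cdots o_t\,j\cdots$. The pair $(k',j)$ then meets conditions~(1) and~(2) of Definition~\ref{def:gen eO} at distance~$t$; hence $\tilde{\pi}\in\GIII_{2n}$ avoiding $\eO_t$ forces $k'>j$. Since now $i<j<k'$, both $(i,k')$ and $(j,k')$ are $\oE$ pairs, with $i$ preceding $k'$ and $k'$ preceding $j$ in $\tilde{\pi}$. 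Compatibility of $\tilde{\pi}$ with $\tilde{R}$ therefore yields $x_i<x_{k'}$ and $x_{k'}<x_j$, so that $x_i<x_j$, contradicting $x_j<x_{2n+2}<x_i$.

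With item~(1) in hand, item~(2) follows by the same pivot trick: assuming for contradiction that some even $k\in\alpha$ satisfies $k>i$, we have $j<i<k$, so that both $(i,k)$ and $(j,k)$ are $\oE$ pairs; since $i$ precedes $k$ and $k$ precedes $j$ in $\tilde{\pi}$, compatibility gives $x_i<x_k$ and $x_k<x_j$, whence $x_i<x_j$, again contradicting the bad-pair inequality. The main obstacle, and the reason the claim is more delicate than Claim~\ref{cla:bad pair}, is that $\GIII_{2n}$ permits $\oO$ ascents, so one cannot work directly with the immediate neighbors of $j$ as in the $\GI$ setting; one must instead reach across a long odd run and invoke the generalized $\eO_d$ avoidance with exactly the right $d$. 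Once the rightmost even $k'$ is isolated as the correct test element, the contradictions drop out cleanly via compatibility.
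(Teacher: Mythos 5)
Your proposal is correct and follows essentially the same route as the paper: the paper also takes the nearest even letter to the left of $j$ (your $k'$), uses $\eO_d$-avoidance to conclude it exceeds $j$, and derives the contradiction $x_i<x_{k'}<x_j$ via compatibility on the two $\oE$ pairs, with item~(2) reduced to item~(1) by the same pivot. Your additional remarks — spelling out which $\oE$ pairs are invoked and why the extra hypothesis on an even letter between $i$ and $j$ is needed (since $\GIII$ permits $\oO$ ascents) — are accurate and merely make explicit what the paper leaves implicit.
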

    
    To see item (1), assume on the contrary that $i<j$, let $c$ be the nearest even number to the left of $j$, recall that $\tilde{\pi}$ avoids parity pattern $\eO_d$, hence $c>j$. But this implies that $x_i<x_c<x_j$ in both $\bx$ and $\tilde{\bx}$, which contradicts with $x_j<x_{2n+2}<x_i$. For item (2), if $i<k$ then combining with item (1) we see $k$ is an even number larger than both $i$ and $j$, hence we arrive at the same contradictory inequality $x_i<x_k<x_j$. 
    
    \begin{framed}
    	\begin{center}
    		{\bf Insertion Algorithm III (IA-III)}
    	\end{center}
    	\begin{itemize}
    		\item[Initiate] Input $y$, $\tilde{\pi}$, and $\bx$.
    		\item[Step 1] If there are no bad pairs in $\tilde{\pi}$ with respect to $\bx$, then let $\tilde{w}:=\tilde{\pi}$ and jump to Step 4. Otherwise, find the rightmost $j$ in $\tilde{\pi}$ such that $(i,j)$ forms a bad pair with respect to $\bx$ for a certain $i$.
    		\item[Step 2] Find all bad pairs ending with $j$, say $(i_1,j),\ldots,(i_m,j)$ with $i_1<\cdots<i_m$. Remove $i_1,\ldots,i_m$ from $\tilde{\pi}$ and form the subword $u:=i_1 i_2 \cdots i_m$ by concatenation.
    		\item[Step 3] Suppose the word obtained from Step 2 is $\cdots jvz\cdots$, where $v$ is an odd factor and $z$ is even. If no such $z$ exists, simply let $v$ be the entire suffix after $j$. Set $\tilde{w}:=\cdots j\mathrm{Re}(vu)z\cdots$ (or $\tilde{w}:=\cdots j\mathrm{Re}(vu)$ for the ``no such $z$ exists'' case).
    		\item[Step 4] Find the rightmost odd number $k$ in $\tilde{w}$ such that $x_k<x_{2n+2}$. Insert $y$ into $\tilde{w}$ to the immediate right of $k$ to obtain $w:=\cdots ky\cdots$. Note that if the algorithm goes through Steps 2 and 3, then $k=j$. In the particular case that no such $k$ exists, simply take $w:=y\tilde{w}$.
    		\item[End] Output $w$.
    	\end{itemize}
    \end{framed}
    
    To finish the proof for $\LIII$, one needs to verify the following facts. 
    \begin{itemize}
    	\item $\pi\in\GIII_{2n+2}$. There are four scenarios that potential new patterns could arise from. 
    	\begin{enumerate}
    		\item When $i_1,\cdots,i_m$ are removed in step 2. Suppose that for a certain $1\le t\le m$, $i_t$ is removed from $\cdots ai_tb\cdots$ and $ab$ forms a new consecutive pair. We shall consider the following three subcases.
    		\begin{itemize}
    			\item $a\equiv b+1\equiv 0$. To avoid the $\eO_d$ pattern we must have $a>b$, so $ab$ forms an $\Eo$ pattern which is allowed.
    			\item $a\equiv b\equiv 0$. Considering $\eO_d$ avoidance and applying Claim~\ref{cla:bad pair III} (2), we have $a>i_t>b$. So $ab$ forms an $\Ee$ pattern which is allowed.
    			\item $a$ is odd. In this case, all three possible patterns for $ab$, namely $\oO$, $\oE$, or $\Oe$ are allowed.
    		\end{itemize}
    		
    		\item When $u$ is inserted in step 3. Firstly the reordering $\Re(vu):=r_1\cdots r_s$ ensures that this subword avoids $\Oo$. Moreover, the consecutive pair $jr_1$ will be further separated by $y$ in step 4, while the pair $r_sz$ is allowed since $z$ is even and $r_s$ is odd.
    		
    		\item When $y$ is inserted in step 4, say as $\cdots kyl\cdots$. The pair $ky$ is allowed since $k$ is odd and smaller than $y$. The pair $yl$ is allowed since $y$ is even and larger than $l$. Moreover, no potential $\eO_d$ pattern could arise from this insertion since $y=2n+2$ itself is even and larger than all remaining numbers. 

    		\item When $2n+1$ is inserted into $w$ after running IA-III. For the case of $x_{2n+1}<x_{2n+2}$, $2n+1$ is followed by the leftmost even number, so no forbidden patterns could arise. For the case of $x_{2n+1}>x_{2n+2}$, no potential $\eO_d$ pattern could arise since the nearest even number to the left of $2n+1$ is $2n+2$, larger than $2n+1$.
    	\end{enumerate}
    	
    	\item The permutation in $\GIII_{2n+2}$ that is compatible with the given region $R$ is unique. Suppose $\pi,\pi'\in\GIII_{2n+2}$ are two distinct permutations both compatible with $R$. Find a pair $(a,b)$ with $a<b$ such that $a$ precedes $b$ in $\pi$ while $b$ precedes $a$ in $\pi'$. We again give details for only two cases, since the other two cases can be discussed in the same way as in the proof of Theorem~\ref{thm:label-I} cases (i) and (iv).
    	\begin{itemize}
    		\item[(i)] $a\equiv b\equiv 0\pmod 2$. Relying on the avoidance of $\eE$ and $\eO_d$, the proof parallels that of case (i) in the proof of Theorem~\ref{thm:label-I}.
    		\item[(ii)] $a+1\equiv b+1\equiv 0\pmod 2$. Since $\pi'$ avoids $\Oo$, there must be some even numbers lying between $b$ and $a$ in $\pi'$. Suppose $\pi'=\cdots b\cdots ca_1\cdots a_ta\cdots$, where $a_1+1\equiv\cdots\equiv a_t+1\equiv c\equiv 0\pmod 2$, then $c>a>a_t>\cdots>a_1$ due to the avoidance of $\Oo$ and $\eO_d$. Now if $c>b$, in region $R$ we have $x_b<x_c<x_a$, rendering $\pi$ incompatible with $R$, a contradiction. If $c<b$, applying compatibility we see $x_c<x_a$ in region $R$ and $c$ should precede $a$ in $\pi$ as well. This in turn implies that $c$ precedes $b$ in $\pi$. Now we could view $(c,b)$ as a pair in case (iii) below, giving rise to a similar contradiction.
    		\item[(iii)] $a\equiv b+1\equiv 0\pmod 2$. Since $\pi$ avoids $\eO_d$, we find the nearest even number to the left of $b$ and suppose $\pi=\cdots a\cdots cb_1\cdots b_tb\cdots$, where $b_1+1\equiv\cdots\equiv b_t+1\equiv c\equiv 0\pmod 2$, again we have $c>b>b_t>\cdots>b_1$ due to the avoidance of $\Oo$ and $\eO_d$. Applying compatibility, we see $x_c<x_b$ in region $R$ and $c$ should precede $b$ in $\pi'$ as well. This in turn implies that $c$ precedes $a$ in $\pi'$. Now we could view $(a,c)$ as a pair in case (i), giving rise to a similar contradiction.
    	\end{itemize}
    \end{itemize}
    
    Lastly, for $\LIV:=\LIV_n: \cR(\cK_{2n}) \to \GIV_{2n}$, it suffices to define $\LIV_{n+1}$ which completes the following diagram. We remark that the construction of $\LIV$ is in some sense dual to the previous three labelings.
    
    \begin{figure*}[h!]
    	\begin{tikzpicture}[scale=0.8]
    		\draw(0,0) node{$\cR(\cK_{2n+2})$};
    		\draw(5,0) node{$\GIV_{2n+2}$};
    		\draw(0,3) node{$\cR(\cK_{2n})$};
    		\draw(5,3) node{$\GIV_{2n}$};
    		\draw(5.7,1.5) node{IA-IV};
    		
    		\draw(2.5,3.3) node{$\LIV_n$};
    		\draw(2.5,0.3) node{$\LIV_{n+1}$};
    		\draw(-.7,1.5) node{$\proj'$};

    		\draw[>=triangle 45, ->, dashed] (1,0) -- (4.3,0);
    		\draw[>=triangle 45, ->] (1,3) -- (4.3,3);
    		\draw[>=triangle 45, <-] (5,0.5) -- (5,2.5);
    		\draw[>=triangle 45, ->] (0,0.5) -- (0,2.5);
    	\end{tikzpicture}
    \end{figure*}
    
    An even (resp.~odd) factor is said to be \emph{maximal} if it is neighboured by odd (resp.~even) letters. We begin by noting that due to the avoidance of patterns $\Oo_d$ and $\Ee_d$ (the $d=0$ case suffices), every permutation in $\GIV$ can be uniquely decomposed into maximal even and odd factors, such that each factor is monotonously increasing. Fix a given region $R\in\cR(\cK_{2n+2})$, we first project it to a region $\tilde{R}\in\cR(\cK_{2n})$, but the projection employed here is different from the one associated to the previous three algorithms. It is defined as follows and we shall denote it as $\proj'$ to distinguish.
    \begin{align*}
    	\proj':~R &\to \tilde{R} \\
    	\bx=(x_1,x_2,x_3\cdots,x_{2n+3}) &\mapsto \tilde{\bx}=(x_3,\cdots,x_{2n+3}).
    \end{align*}
    
    Now by inductive hypothesis we can find the unique permutation $\LIV_{n}(\tilde{R})=a_1a_2\cdots a_{2n}\in\GIV_{2n}$ and let $\tilde{\pi}:=(a_1+2)(a_2+2)\cdots (a_{2n}+2)$. Next, we insert $1$ and $2$ into $\tilde{\pi}$ with necessary modifications when needed to derive a unique permutation $\pi\in\GIV_{2n+2}$. Finally, we set $\LIV_{n+1}(R):=\pi$. 
    
    Pick any vector $\bx\in R$, we first apply the following boxed {\it insertion algorithm IV} to insert $y=1$ and output the word $w$. Then we insert $2$ into $w$ according to the following two cases.
    \begin{itemize}
    	\item If $x_1<x_2$ in $\bx$, then we find in $w$ the leftmost even number $\ell$ to the right of $1$, and insert $2$ to the immediate left of $\ell$ to get $\pi$. In the case that no such $\ell$ exists, we simply append $2$ to the right end to obtain $\pi:=w2$. 
    	\item If $x_1>x_2$ in $\bx$, then let $\ell$ be the leftmost even number in $w$. If $\ell$ precedes $1$ in $w$ we insert $2$ to the immediate left of $\ell$ to get $\pi$. Otherwise we must have $w_1=1$ since $w$ avoids $\Oo_d$, and we set $\pi:=2w$.
    \end{itemize}
    
    A similar notion of ``bad'' is required. A pair of even integers $(i,j)$ is said to be {\it e-bad} with respect to $\bx$, if $i$ precedes $j$ in $\tilde{\pi}$ but $x_j<x_{1}<x_i$ in $\bx$. We need the following facts about e-bad pairs. 
    
    \begin{claim}\label{cla:bad pair IV}
    	Suppose the even pair $(i,j)$ is e-bad in $\tilde{\pi}$ with respect to $\bx$, then we claim that
    	\begin{enumerate}
    		\item $i<j$;
    		\item for every odd number $k$ that appears between $i$ and $j$, we have $i<k$;
    		\item if $i$ belongs to a maximal even factor $\alpha$, then $i<\max(\alpha)$.
    	\end{enumerate}
    \end{claim}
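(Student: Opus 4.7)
The plan is to establish $(1)$, $(2)$, $(3)$ in sequence; in each part the strategy will be to locate a carefully chosen odd letter positioned strictly between $i$ and $j$ in $\tilde{\pi}$ whose existence, combined with the compatibility of $\tilde{\pi}$ with $\tilde R$, forces a chain of inequalities like $x_i<x_o<x_j$, contradicting the e-bad condition $x_j<x_1<x_i$.

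For $(1)$, I would proceed by contradiction, assuming $i>j$. Invoking the remark that each permutation in $\GIV$ decomposes into monotonously increasing maximal even and maximal odd factors, $i$ and $j$ cannot share a maximal even factor (which would force $i<j$). Let $F$ denote the maximal even factor containing $j$, let $G$ be the maximal odd factor immediately preceding $F$ (nonempty by maximality), and let $E$ be the maximal even factor immediately preceding $G$ (this exists because $i$'s factor lies to the left of $F$). Setting $o:=\min(G)$, $\eO$-avoidance on the consecutive boundary pair $(\max(E),o)$ gives $\max(E)>o$, and then $\Ee_{|G|}$-avoidance applied to the pair $(\max(E),\min(F))$, whose intermediate factor $G$ is purely odd, upgrades this to $o<\min(F)\le j<i$. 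Hence $o$ is an odd letter positionally between $i$ and $j$ with $o<i$ and $o<j$; compatibility applied to the $\oE$-pairs $(o,i)$ and $(o,j)$ then yields $x_i<x_o<x_j$, contradicting the e-bad condition.

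For $(2)$, suppose some odd $k$ between $i$ and $j$ has $k<i$. Combined with $(1)$, which gives $i<j$, we have $k<i<j$, so both $(k,i)$ and $(k,j)$ are $\oE$-pairs; compatibility (using $i$ before $k$ and $k$ before $j$ in $\tilde{\pi}$) gives $x_i<x_k<x_j$, again contradicting e-badness. For $(3)$, assume $i=\max(\alpha)$. Since $i$ precedes $j$ and $\alpha$ is maximal, the letter immediately following $i$ in $\tilde{\pi}$ must be odd; call it $o$. The consecutive pair $(i,o)$ must avoid $\eO$, forcing $o<i$. But then $o$ is an odd letter between $i$ and $j$ with $o<i$, contradicting $(2)$.

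The main obstacle lies in $(1)$, specifically in finding an odd $o$ between $i$ and $j$ that is strictly less than $j$ (not merely less than $i$), so that both $\oE$-pairs $(o,i)$ and $(o,j)$ are available for compatibility. The resolution is the combined use of $\eO$-avoidance at a single consecutive boundary with the generalized $\Ee_d$-avoidance across the pure-odd factor $G$; the genuine need for the distance-$d$ version of $\Ee$ in the definition of $\GIV_{2n}$ is what makes this step work. Once $(1)$ is in hand, parts $(2)$ and $(3)$ follow quickly by the same compatibility-chain technique, with $(3)$ being essentially a corollary of $(2)$.
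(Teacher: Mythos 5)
Your proof is correct and follows essentially the same route as the paper's: for (1) you locate the minimum $o$ of the last maximal odd factor before $j$, use $\eO$-avoidance at its left boundary and $\Ee_d$-avoidance across that odd factor to force $o<j<i$, and then derive the contradiction $x_i<x_o<x_j$ via compatibility — this is exactly the paper's argument with $o=k_1$, $\max(E)=c$, and $\min(F)=j_1$. Parts (2) and (3) likewise match the paper's proofs (with (3) phrased contrapositively rather than directly), so there is nothing substantive to add.
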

    
    To see item (1), assume on the contrary that $i>j$, recall that $\tilde{\pi}$ avoids parity patterns $\Ee_d$, hence there exists at least one odd entry between $i$ and $j$. Suppose $\pi=\cdots i \cdots ck_1\cdots k_tj_1\cdots j_sj$, where $c\equiv k_1+1\equiv\cdots\equiv k_t+1\equiv j_1\equiv\cdots j_s\equiv j\equiv 0\pmod 2$, then $c>k_1$ and $k_1<j_1\le j$ due to the avoidance of $\eO$ and $\Ee_d$. But this implies that $x_i<x_{k_1}<x_j$ in both $\bx$ and $\tilde{\bx}$, which contradicts with $x_j<x_1<x_i$. For item (2), if $i>k$ then combining with item (1) we see $k$ is an odd number less than both $i$ and $j$, hence we arrive at the same contradictory inequality $x_i<x_k<x_j$. Finally for item (3), if $i$ and $j$ belong to the same even factor (i.e., all letters sitting between $i$ and $j$ are even), the claim clearly holds since $i<j$ by (1). Otherwise we suppose the even factor $\alpha$ that contains $i$ is followed by an odd letter $k$, which precedes $j$. According to item (2) we must have $i<k$, and $i$ and $k$ cannot be consecutive since $\tilde{\pi}$ avoids $\eO$, hence $i<\max(\alpha)$.
    
    The algorithm below tells us how to remedy all the e-bad pairs while inserting $y=1$ into $\tilde{\pi}$. 
    
    \begin{framed}
    	\begin{center}
    		{\bf Insertion Algorithm IV (IA-IV)}
    	\end{center}
    	\begin{itemize}
    		\item[Initiate] Input $y$, $\tilde{\pi}$, and $\bx$.
    		\item[Step 1] If there are no e-bad pairs in $\tilde{\pi}$ with respect to $\bx$, then let $u:=\varnothing$, $\tilde{w}:=\tilde{\pi}$ and jump to Step 3. Otherwise, find the rightmost $j$ in $\tilde{\pi}$ such that $(i,j)$ forms an e-bad pair with respect to $\bx$ for a certain $i$.
    		\item[Step 2] Find all e-bad pairs ending with $j$, say $(i_1,j),\ldots,(i_m,j)$ with $i_1<\cdots<i_m$. Remove $i_1,\ldots,i_m$ from $\tilde{\pi}$ to get a new word $\tilde{w}$, and form the subword $u:=i_1 i_2 \cdots i_m$ by concatenation.
    		\item[Step 3] Find the rightmost even number $k$ in $\tilde{w}$ such that $x_k<x_1$. Note that if the algorithm goes through Step 2, then $k=j$. Decompose the word $\tilde{w}$ as 
    			$$\tilde{w}=\alpha \beta^{-}\beta^{+}\gamma^{-}k\gamma^{+}\sigma^{-}\sigma^{+}\tau,$$
    		where $\alpha,\beta^{-},\beta^{+},\gamma^{-},\gamma^{+},\sigma^{-},\sigma^{+},\tau$ are all (possibly empty) factors, with $\alpha$ ending with an even letter, $\tau$ beginning with an even letter, and 
    		$$\beta^{-}=\beta^{+}=\sigma^{-}=\sigma^{+}=\mathrm{O},~\gamma^{-}=\gamma^{+}=\mathrm{E},$$
    		as well as
    		$$\max(\beta^{-})<k<\min(\beta^{+}),~\max(\gamma^{-})<k<\min(\gamma^{+}),~\max(\sigma^{-})<i_m<\min(\sigma^{+}).$$
    		There are three subcases to be discussed when $y$ and $u$ are inserted into $\tilde{w}$.
    		\begin{itemize}
    			\item[(i)] If $\gamma^{+}=\sigma^{-}=\varnothing$, we take $w:=\alpha \beta^{-}\gamma^{-}ky (\beta^{+}\sigma^{+}) u\tau$. Note that in this case $\beta^{+}$ and $\sigma^{+}$ cannot both be non-empty\footnotemark, so either $\beta^+\sigma^+=\beta^+$ or $\beta^+\sigma^+=\sigma^+$.
    			\item[(ii)] If $\gamma^{+}=\varnothing$ but $\sigma^{-}\neq\varnothing$, by a similar argument as in (i) (see footnote $\# 4$ below), we see that $\beta^{+}=\varnothing$ to avoid $\Oo_d$ pattern in $\tilde{w}$, and we take $w:=\alpha\beta^{-}\gamma^{-}kyu\sigma^{-}\sigma^{+}\tau$.
    			\item[(iii)] If $\gamma^{+}\neq \varnothing$, we take $w:=\alpha\beta^{-}\gamma^{-}ky\beta^{+}u\gamma^{+}\sigma^{-}\sigma^{+}\tau$.
    		\end{itemize}
    		In the particular case that no such $k$ exists, simply take $w:=y\tilde{w}$.
    		\item[End] Output $w$. 
    	\end{itemize}
    \end{framed}
    \footnotetext{Indeed, if $\beta^{+}\neq\varnothing$ and $\sigma^{+}\neq\varnothing$, then the pair $(\max(\beta^{+}),\min(\sigma^{+}))$ forms an $\Oo_d$ pattern in $\tilde{w}$, a contradiction.}
    
    To finish the proof, one needs to verify the following facts. 
    \begin{itemize}
    	\item $\pi\in\GIV_{2n+2}$. It suffices to show that after each step, the new word obtained still avoids those three parity patterns. There are three scenarios that potential new patterns could arise from. 
    	\begin{enumerate}
    		\item When $i_1,\cdots,i_m$ is removed in step 2. Thanks to Claim ~\ref{cla:bad pair IV} (3), we know that none of $i_1,\cdots,i_m$ could be the ending letter in an even factor, hence no $\eO$, $\Oo_d$, or $\Ee_d$ patterns could be formed.
    		
    		\item  When $u$ and $y=1$ are inserted in step 3, we examine the potential new patterns case-by-case.
    		\begin{itemize}
    			\item[(i)] $\gamma^{+}=\sigma^{-}=\varnothing$. We see $\min(\beta^{-})=\min(\beta^{-}\beta^{+})$ so the ending letter of $\alpha$ and $\min(\gamma^{-}k)$ remain being not an $\Ee_d$ pattern. $\max(\beta^{-})<k$ so $\max(\beta^{-})$ and $y=1$ do not form an $\Oo_d$ pattern. $y=1<i_1$ ensures that $k$ and $i_1$ do not form an $\Ee_d$ pattern. Lastly, when $\beta^{+}\sigma^{+}=\sigma^{+}$ and $\tau\neq\varnothing$, if $\max(\sigma^{+})$ and a letter in $\tau$ form an $\Oo_d$ pattern in $w$, then this occurrence of $\Oo_d$ would have already existed in $\tilde{w}$, a contradiction. When $\beta^{+}\sigma^{+}=\beta^{+}$, then actually $\tau=\varnothing$ and $w$ ends with $u$, thus nothing needs to be further checked.
    			\item[(ii)] $\gamma^{+}=\varnothing$ but $\sigma^{-}\neq\varnothing$. By Claim~\ref{cla:bad pair IV} (1) we have $\max(u)=i_m<k$, so if $i_m$ and the beginning letter of $\tau$ forms an $\Ee_d$ pattern in $w$, then $k$ and the beginning letter of $\tau$ also forms an $\Ee_d$ pattern in $\tilde{w}$, a contradiction. The remaining verifications go through analogously as in case (i).
    			\item[(iii)] $\gamma^{+}\neq \varnothing$. If $\beta^{+}=\varnothing$, the verifications are almost the same as case (ii). If $\beta^{+}\neq\varnothing$, we need to further check the potential pattern arose from the pair $(\max(\beta^{+}),\min(\sigma^{-}\sigma^{+}))$. Suppose this is indeed an $\Oo_d$ pattern, then $\max(\beta^{+})>\max(\gamma^{+})>\min(\sigma^{-}\sigma^{+})$, but this in turn implies that the same pair $(\max(\beta^{+}),\min(\sigma^{-}\sigma^{+}))$ would have formed an $\Oo_d$ pattern in $\tilde{w}$ already, which is a contradiction.
    		\end{itemize}
    		 
    		\item When $2$ is inserted into $w$ after the running of IA-IV. For the two special cases $\pi=w 2$ or $\pi=2 w$, it is readily checked that $\pi\in\GIV_{2n+2}$. Otherwise there are two more cases. If $x_1<x_2$ in $\bx$, then we can write $\pi=\cdots\alpha\beta b\cdots$, where $\alpha$ is an odd factor beginning with $1$, $\beta$ is an even factor beginning with $2<\max(\beta)$, and $b$ is odd. Now if $(\max(\alpha),b)$ becomes an $\Oo_d$ pattern meaning that $\max(\alpha)>\max(\beta)>b$, then $(\max(\alpha),b)$ would have been an $\Oo_d$ pattern already in $w$, a contradiction. A similar argument works for the case when $x_1>x_2$ in $\bx$, and $2$ is inserted to the immediate left of the leftmost even letter in $w$.

    	\end{enumerate}

    	\item The image permutation $\pi\in\GIV_{2n+2}$ is compatible with the given region $R$. The relative orders between $1$ and every $2j$, $1\le j\le n+1$ is compatible with $R$ since we have gotten rid of all the e-bad pairs via IA-IV. In addition, comparing the positions of $i_1,i_2,\ldots,i_m$ in $\tilde{\pi}$ and $\pi$ reveals that they only passed odd numbers that are greater than themselves (thanks to Claim~\ref{cla:bad pair IV} (2)). Furthermore, the odd factor $\beta^{+}$ being moved in Step 3 of IA-IV only passed even numbers that are less than $\min(\beta^{+})$. So the relative orders between $2i-1$ and $2j$ ($2\le i\le j\le n+1$) in $\pi$ remain the same as in $\tilde{\pi}$. Consequently, we can use induction hypothesis to conclude that $\pi$ is indeed compatible with $R$.
    	
    	\item The permutation in $\GIV_{2n+2}$ that is compatible with the given region $R$ is unique. Suppose $\pi,\pi'\in\GIV_{2n+2}$ are two distinct permutations both compatible with $R$. Find a pair $(a,b)$ with $a<b$ such that $a$ precedes $b$ in $\pi$ while $b$ precedes $a$ in $\pi'$. Fix a vector $\bx\in R$ and consider the following four cases. 
    	\begin{itemize}
    		\item[(i)] $a\equiv b\equiv 0\pmod 2$. The permutation $\pi'$ in this case has already been discussed in the proof of Claim~\ref{cla:bad pair IV} (1), thus we have $x_b<x_{k_1}<x_a$ in $\bx$, where $k_1$ is an odd letter less than $b$ and $a$ while sitting between $b$ and $a$. This implies that $\pi$ is incompatible with $R$, a contradiction. 
    		
    		\item[(ii)] $a+1\equiv b+1\equiv 0\pmod 2$. Since $\pi'$ avoids $\Oo_d$, there must be some even numbers lying between $b$ and $a$ in $\pi'$. Suppose $\pi'=\cdots bb_1\cdots b_tc_1\cdots c_s d\cdots a\cdots$, where $b_1+1\equiv\cdots\equiv b_t+1\equiv c_1\equiv\cdots\equiv c_s \equiv d+1\equiv0\pmod 2$, then $b\le b_t<c_s$ due to the avoidance of $\Oo_d$ and $\eO$, thus we have $x_b<x_{c_s}<x_a$ in $\bx$, rendering $\pi$ incompatible with $R$, a contradiction. 
    		
    		\item[(iii)] $a\equiv b+1\equiv 0\pmod 2$. Since $\pi$ avoids $\eO$, there must be some numbers lying between $a$ and $b$ in $\pi$. Assume that $\pi=\cdots aa_1\cdots a_tc\cdots b\cdots$, where $a_1\equiv\cdots\equiv a_t\equiv c+1\equiv0\pmod 2$. Notice that $a=a_t$ and $c=b$ cannot both happen. In addition, we have $a_t>c$ due to the avoidance of $\eO$, thus in $\bx$ we have $x_{a_t}<x_c$, implying that $a_t$ precedes $c$ in $\pi'$. There are five subcases to consider. 
    		\begin{enumerate}[label=(\alph*)]
    			\item If $a=a_t$ and $c\neq b$, then $a>c$ and thus $x_a<x_c$ in $\bx$, implying that $a$ (hence $b$ as well) precedes $c$ in $\pi'$. Therefore we can treat $(c,b)$ as a pair in case (ii) and deduce similar contradiction. 
    			\item If $c=b$ and $a\neq a_t$, then $a_t>b$ and thus $x_{a_t}<x_b$ in $\bx$, implying that $a_t$ precedes $b$ (hence $a$ as well) in $\pi'$. Therefore we can treat $(a,a_t)$ as a pair in case (i) and deduce similar contradiction. For the remaining cases we may assume that $a\neq a_t$ and $c\neq b$.
    			\item If $a_t$ precedes $a$ in $\pi'$, then $(a,a_t)$ is a pair in case (i) and we arrive at similar contradiction.
    			\item If $b$ precedes $c$ in $\pi'$ then $(c,b)$ is a pair in case (ii) and we arrive at similar contradiction.
    			\item If none of the above four cases happen, we see the four letters $a,a_t,c,b$ appear in $\pi'$ in that order, which contradicts with our choice of $\pi'$.
    		\end{enumerate} 

    		\item[(iv)] $a+1\equiv b\equiv 0\pmod 2$. This case cannot happen, which can be showed in a similar fashion as in the proof of Theorem~\ref{thm:label-I} (iv).
    	\end{itemize}
    \end{itemize}
\end{proof}
 
 \begin{example}
 	To illustrate the bijections constructed in the proof of Theorem~\ref{thm:three more labelings}, as well as to make a comparison with the first model $\GI$, we label the region $R$ from Example~\ref{ex-I} with $\LII$, $\LIII$, and $\LIV$, respectively. 
 	
 	For the bijection $\LII$, we also define $\tilde{R}_i=\proj^{8-i}(R)$ for $i=0,\ldots,7$ and $\tilde{\pi}^{(j)}=\LII_{j}(\tilde{R}_{j-1})$ for $j=1, \ldots, 8$. One reads from the Hasse diagram in Fig.~\ref{Hasse diagram} that $\tilde{R}_0=\{\bx\in\bR^3:x_1-x_2<0\}$, so initially $\tilde{\pi}^{(1)}=\LII_1(\tilde{R}_0)=12$, and we will insert $3$ and $4$. We see $x_3<x_4$ in the diagram so we set $y=4$ and start IA-II. There are no bad pairs in $\tilde{\pi}^{(1)}$ with respect to $\bx$ and there exists no $k$ such that $x_{k}>x_4$, so 
 	$$w=124,\text{ and }\tilde{\pi}^{(2)}=3124.$$
 	Next, we insert $5$ and $6$. The diagram shows $x_5>x_6$, so we set $y=65$, there are no bad pairs and $3$ is the leftmost $k$ in $\tilde{\pi}^{(2)}$ such that $x_k>x_6$, so
 	$$k=3,~\tilde{\pi}^{(3)}=w=653124.$$
 	We continue to insert $7$ and $8$. The diagram shows $x_7<x_8$, so $y=8$. Now $(3,1)$ is a bad pair since $3$ precedes $1$ in $\tilde{\pi}^{(3)}$ while the diagram indicates that $x_1<x_8<x_3$. Consequently, applying IA-II we derive
 	$$i_1=k=3,~u=3,~z=4,~\tilde{w}=651234,~w=6512834,\text{ and }\tilde{\pi}^{(4)}=76512834.$$
 	The remaining numbers can be inserted analogously using IA-II so we omit the details. The key parameters for each round of IA-II are listed below to reflect the incremental changes. 
 	\begin{align*}
 		& y=(10)9,~i_1=k=7,~u=7,~z=1,~\tilde{w}=65712834, \text{ and }\tilde{\pi}^{(5)}=w=65(10)9712834.\\
 		& y=(12),~i_1=k=9,~u=9,~\tilde{w}=65(10)7128349,\\
 		&\qquad \qquad \qquad w=65(10)712834(12)9,\text{ and }\tilde{\pi}^{(6)}=(11)65(10)712834(12)9.\\
 		& y=(14)(13),~i_1=k=(11),~u=(11),~z=(12),~\tilde{w}=65(10)712834(11)(12)9, \text{ and}\\
 		& \qquad \qquad \qquad \tilde{\pi}^{(7)}=w=65(10)712834(14)(13)(11)(12)9.\\
 		& y=(16),~i_1=k=(13),~u=(13),~\tilde{w}=65(10)712834(14)(11)(12)9(13),\\
 		& \qquad w=65(10)712834(14)(11)(12)9(16)(13),\text{ and }\tilde{\pi}^{(8)}=(15)65(10)712834(14)(11)(12)9(16)(13).\\
 		& y=(18)(17),~i_1=k=(15),~u=(15)7,~z=8, ~\tilde{w}=65(10)12(15)7834(14)(11)(12)9(16)(13), \text{ and }\\
 		& \qquad \LII_9(R)=\tilde{\pi}^{(9)}=w=65(10)12(18)(17)(15)7834(14)(11)(12)9(16)(13).
 	\end{align*}
 	
 	For the bijection $\LIII$, after $8$ rounds of running IA-III we derive that 
 	$$\tilde{\pi}=(15)65(10)1783(14)(11)(12)9(16)(13)42.$$
 	Now we insert $17$ and $18$. According to the diagram, $(15,1)$ and $(15,5)$ are the bad pairs in $\tilde{\pi}$, so applying IA-III we derive 
 	$$j=k=1,~u=(15),~v=7,~z=8, 
 	~w=65(10)1(18)7(15)83(14)(11)(12)9(16)(13)42.$$
 	The diagram shows $x_{17}>x_{18}$, so we insert $17$ before the leftmost even number to the right of $18$, which is $8$. Consequently, we deduce that
 	$$\LIII_9(R)=\pi=65(10)1(18)7(15)(17)83(14)(11)(12)9(16)(13)42.$$ 
 	
 	For the bijection $\LIV$, after $8$ rounds of running IA-IV we derive that 
 	$$\tilde{\pi}=65(10)(18)7834(14)(11)(12)9(15)(16)(13)(17).$$
 	Now we insert $1$ and $2$. There are no e-bad pairs in $\tilde{\pi}$ and $k=(10)$ with $\gamma^{+}=(18)\neq\varnothing$, so we apply the case (iii) in Step 3 (note that $\beta^{+}=\varnothing$) to output $$w=65(10)1(18)7834(14)(11)(12)9(15)(16)(13)(17).$$ 
 	The diagram shows $x_1<x_2$, so we insert $2$ to arrive at
 	$$\LIV_9(R)=\pi=65(10)12(18)7834(14)(11)(12)9(15)(16)(13)(17).$$
 \end{example}

\subsection{The proof of Theorem~\ref{thm:GIS}}
When the enumeration of permutations in $\GI$ is refined with respect to their first letter, the triangular array in Table~\ref{Tab:GI} is produced. For convenience, we make the convention that $\Gi_{2n,k}=0$ for all $k<1$ and $k>2n$. In this subsection, we prove some properties for the entries in Table~\ref{Tab:GI} which includes Theorem~\ref{thm:GIS}.

The following relation linking Genocchi numbers with tangent numbers is well known; see~\cite{HL18} for a combinatorial approach and \cite{ZLZ25} for an elementary proof that relies on an identity and uses induction.
\begin{theorem}
Let the tangent number $T_{2n+1}$ be defined as the coefficient of $x^{2n+1}$ in the Taylor expansion $\tan(x)=\sum_{n\ge 0}T_{2n+1}x^{2n+1}/(2n+1)!$, then for all $n\ge 0$ we have
\begin{align*}
g_{n+1} = \frac{(n+1)T_{2n+1}}{2^{2n}},
\end{align*}
and $g_n$ is always an odd integer.
\end{theorem}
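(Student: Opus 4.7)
The plan is to establish the identity $g_{n+1} = (n+1)T_{2n+1}/2^{2n}$ by directly comparing exponential generating functions, and then to deduce the odd--integer assertion either through a sign-reversing involution on one of the combinatorial models in Definition~\ref{def:five perm}, or by a $2$-adic analysis of the tangent numbers combined with the identity itself.

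First I would start from $\sum_{n\ge 1} g_n x^{2n}/(2n)! = x\tan(x/2)$ and substitute $t=x/2$ into $\tan(t) = \sum_{n\ge 0} T_{2n+1}t^{2n+1}/(2n+1)!$, obtaining
$$
x\tan(x/2)=\sum_{n\ge 0}\frac{T_{2n+1}}{2^{2n+1}(2n+1)!}\,x^{2n+2}.
$$
Matching the coefficient of $x^{2n+2}$ on both sides gives $g_{n+1}/(2n+2)! = T_{2n+1}/(2^{2n+1}(2n+1)!)$, which collapses to the claimed formula after multiplying by $(2n+2)!$. This half of the argument is routine. For the odd-integer assertion, integrality of $g_n$ is immediate from any of its combinatorial interpretations, e.g.\ $g_{n+1} = |\D^{\mathrm{I}}_{2n}|$. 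My preferred route for oddness is combinatorial: construct a sign-reversing involution $\iota$ on $\D^{\mathrm{I}}_{2n-2}$ (or on an equivalent model such as $\D^{\mathrm{III}}_{2n-2}$) that pairs up all permutations save for a single canonical fixed point. A plausible rule locates the first position at which the permutation deviates from a fixed ``staircase'' template and toggles an appropriate pair of adjacent letters while respecting the Dumont parity constraints. An alternative, more analytic route applies the identity just proved: it suffices to verify the $2$-adic estimate $v_2(T_{2n+1}) = 2n - v_2(n+1)$, which, in concert with the identity, forces $g_{n+1}$ to be a quotient of two odd integers. This estimate can be attacked by induction using the convolution recurrence $T_{2n+1} = \sum_{i=0}^{n-1}\binom{2n}{2i+1}T_{2i+1}T_{2n-2i-1}$ derived from $\tan'(x) = 1 + \tan^2(x)$, together with a Kummer-type analysis of the binomial $2$-adic valuations.

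The principal obstacle is clearly the oddness claim. For the involution route, the strictness of the Dumont parity conditions makes it delicate to guarantee exactly one fixed point; one generally needs a carefully chosen statistic (such as the position of the first ``defect'' from a canonical form) to drive the swap, and then has to rule out accidental extra fixed points case by case. For the $2$-adic route, the difficulty transfers to the valuation formula itself, which requires patient bookkeeping with binomial $2$-adic valuations through the recurrence. Either way, the generating-function step is essentially a bookkeeping exercise, and the real weight of the proof rests on this parity or valuation control.
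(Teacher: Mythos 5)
The paper itself does not prove this theorem: it is quoted as a well-known fact, with pointers to \cite{HL18} for a combinatorial proof and to \cite{ZLZ25} for an elementary inductive one. So there is no in-paper argument to measure you against, and your proposal has to stand on its own. Its first half does: substituting $t=x/2$ into the tangent series and comparing the coefficient of $x^{2n+2}$ in $\sum_{n\ge 1}g_n x^{2n}/(2n)! = x\tan(x/2)$ gives $g_{n+1}=(2n+2)!\,T_{2n+1}/\bigl(2^{2n+1}(2n+1)!\bigr)=(n+1)T_{2n+1}/2^{2n}$, exactly as claimed. Integrality of $g_{n+1}$ from $g_{n+1}=|\D^{\mathrm{I}}_{2n}|$ is likewise fine.

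The oddness assertion, however, is a genuine gap, and you acknowledge as much. Neither of your two routes is actually executed. The involution on $\D^{\mathrm{I}}_{2n}$ is described only as ``a plausible rule'': no map is defined, no verification that it respects the Dumont parity constraints or squares to the identity is given, and the unique fixed point is not exhibited, so nothing is proved on that side. The $2$-adic route correctly observes that, granted the identity, oddness of $g_{n+1}$ is equivalent to $v_2(T_{2n+1})=2n-v_2(n+1)$ (this checks out numerically, e.g.\ $v_2(T_7)=v_2(272)=4=6-v_2(4)$), but that valuation formula is a nontrivial classical fact whose difficulty is essentially that of the statement being proved. Pushing it through the convolution recurrence $T_{2n+1}=\sum_{i}\binom{2n}{2i+1}T_{2i+1}T_{2n-2i-1}$ requires identifying precisely which summands attain the minimal $2$-adic valuation and showing that those summands do not cancel modulo a higher power of $2$; none of that bookkeeping appears. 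In short, the identity is established and integrality is immediate, but the oddness of $g_n$ --- the substantive content of the second assertion --- remains unproved in your write-up.
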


The first two columns in Table~\ref{Tab:GI} are both Genocchi numbers and thus are odd in view of the result above. We begin with an observation that determines the parity for the remaining entries in Table~\ref{Tab:GI}. For $n\ge 2$ and $k\ge 3$, let us denote
\begin{align}
	\tGI_{2n,k}:=\{\pi\in\GI_{2n,k}:\pi_{2n}\neq 2\}, \text{ and }\cGI_{2n,k}:=\{\pi\in\GI_{2n,k}:\pi_{2n}=2\}. \label{bisection}
\end{align}

\begin{proposition}\label{prop:div}
	For all $n\ge 1$, we have 
	\begin{enumerate}[label=(\roman*)]
		\item There exists a bijection for every $k\ge 3$: 
		$$\eta : \tGI_{2n,k} \to \cGI_{2n,k}.$$ 
		In particular, $\Gi_{2n,k}$ is even for any $k\ge 3$.
		\item $\Gi_{2n,1}=\Gi_{2n,2}$, which is \eqref{id:1=2} in Theorem~\ref{thm:GIS}.
	\end{enumerate}
\end{proposition}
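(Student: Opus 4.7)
My plan is to exploit a strong dichotomy on where the letter $2$ can appear in any permutation of $\GI_{2n,k}$ with $k\ge 3$, and this same dichotomy drives both parts of the proposition.

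First I would record the key structural observation. In $\GI_{2n}$ every ascent is forced to be of type $\oE$, so the even letter $2$ can never play the role of the left end of an ascent. Combined with $\pi_1=k\ge 3$, this means that if $2$ is not the last letter of $\pi$, then the pair $(2,\pi_{i+1})$ at the position of $2$ must be a descent, hence $\pi_{i+1}=1$. Therefore $\GI_{2n,k}=\tGI_{2n,k}\sqcup\cGI_{2n,k}$, where $\tGI_{2n,k}$ is characterized by the presence of $21$ as a consecutive factor and $\cGI_{2n,k}$ by $\pi$ ending in $2$.

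For part (i), I would define $\eta:\tGI_{2n,k}\to\cGI_{2n,k}$ by
\[
\eta(\pi) := \alpha\,1\,\beta\,2,\qquad\text{where } \pi=\alpha\,2\,1\,\beta,
\]
i.e., delete the $2$ from the unique $21$ factor and append it to the right end. Membership in $\GI$ is confirmed by inspecting only the handful of altered adjacencies: letting $a$ denote the letter preceding the deleted $2$ (which exists since $\pi_1=k\ge 3$), the legality of $a2$ in $\pi$ together with $a\ne 1$ forces $a>2$, so the new pair $a1$ is a valid descent; the pair $(1,\text{first of }\beta)$ is untouched; and the new last pair is either $(\text{last of }\beta,2)$, a valid descent since the last letter of $\beta$ is at least $3$, or simply $12$ when $\beta=\varnothing$, which is an allowed $\oE$ ascent. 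The inverse map strips the trailing $2$ from $\sigma\in\cGI_{2n,k}$ and reinserts it immediately to the left of $1$ (which must lie at some interior position since $\sigma_1=k\ne 1$ and $\sigma_{2n}=2$); a symmetric adjacency check shows the result lies in $\tGI_{2n,k}$. Consequently $\Gi_{2n,k}=2|\tGI_{2n,k}|$ is even.

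For part (ii), an analogous but simpler analysis suffices. When $n\ge 2$, any $\pi\in\GI_{2n,1}$ must satisfy $\pi_{2n}=2$ (since $1$ already occupies position $1$, the dichotomy leaves $2$ no other legal slot), while any $\sigma\in\GI_{2n,2}$ must have $\sigma_2=1$ (the pair $2\sigma_2$ cannot be an ascent, so it is a descent). The prescription
\[
1\,\pi_2\cdots\pi_{2n-1}\,2 \;\longmapsto\; 2\,1\,\pi_2\cdots\pi_{2n-1}
\]
is then transparently a bijection between $\GI_{2n,1}$ and $\GI_{2n,2}$: it preserves all old adjacencies of $\pi$, introduces the new descent $21$ at the front, and the pair $1\pi_2$ (automatically an $\oE$ ascent) already appeared in the source. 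The degenerate case $n=1$ is checked by direct inspection using $\GI_2=\{12,21\}$.

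I anticipate no serious obstacle. The only real care is to avoid inadvertently creating a forbidden ascent when the letter $2$ is moved, and to handle the boundary cases $\beta=\varnothing$ and $n=1$; both reduce to small parity checks once the dichotomy on the placement of $2$ has been recorded. The main conceptual point, rather than any long calculation, is the recognition that $\oE$-only ascents pin down the position of $2$ almost completely whenever $\pi_1\ne 1$.
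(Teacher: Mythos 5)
Your proposal is correct and uses essentially the same argument as the paper: the dichotomy that any $\pi\in\GI_{2n,k}$ with $k\ge 3$ either ends in $2$ or contains $21$ as a factor, and the bijection $\eta$ that deletes the $2$ from the $21$ factor and appends it to the right end, with part (ii) obtained by the same map in the case where the $21$ factor sits at the front. Your adjacency checks just make explicit what the paper leaves to the reader.
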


\begin{proof}
	The bijection is defined as $$\eta: \pi_1\cdots \pi_j 21 \pi_{j+3}\cdots \pi_{2n}\mapsto \pi_1\cdots \pi_j 1 \pi_{j+3}\cdots \pi_{2n}2.$$
	This is indeed a bijection since every permutation not ending in $2$ must contain $21$ as a factor to avoid patterns $\eE$ and $\eO$. This proves (i).

	For the particular case that $\pi_1\cdots \pi_j=\varnothing$, we see that $\eta$ restricts to a bijection between $\GI_{2n,1}$ and $\GI_{2n,2}$. Consequently we have $\Gi_{2n,1}=\Gi_{2n,2}$.
\end{proof}


Before we prove the rest of identities in Theorem~\ref{thm:GIS}, the following notation is introduced for our convenience.
\begin{Def}{\cite[Definition~1.0.9]{kit11}}
	The {\it reduced form} of a permutation $\pi$ on a set $\{j_1,j_2,\ldots,j_r\}$ where
	$j_1<j_2<\cdots<j_r$, is the permutation $\pi'$ obtained by renaming the letters in $\pi$ so that $j_i$ is renamed $i$ for all $i\in \{1,\ldots,r\}$. In other words, to find the reduced form of a permutation $\pi$ on $r$ elements, we replace the $i$-th smallest letter of $\pi$ by $i$, for $i=1,\ldots,r$. We let $\re(\pi)$ denote the reduced form of $\pi$.
\end{Def}
For example $\re(435)=213$, and $\re(2165)=2143$.


\begin{proof}[Proof of Theorem~\ref{thm:GIS}]
	Now that \eqref{id:1=2} is already shown in Proposition~\ref{prop:div}, we prove the remaining four identities in items (i)--(iv), respectively. Throughout we let $\pi =\pi_1 \cdots \pi_{2n} \in \GI_{2n}$ be a given permutation.
	\begin{enumerate}[label=(\roman*)]		
		\item By Proposition~\ref{prop:div} (i), to prove \eqref{E_3}, it suffices to construct a bijection
		$$f: \tGI_{2n,3}:=\{\pi\in\GI_{2n,3}: \pi_{2n}\neq 2\}\to \GI_{2n,1}\bigcup\GI_{2n-2,1}.$$
		We construct the image $f(\pi)$ according to the following two subcases. 
		\begin{itemize}
			\item If $\pi = 3 \pi_2 \cdots  \pi_{j} 21 \pi_{j+3} \cdots \pi_{2n}$ whith $\pi_{2n}\neq 1$, then $\pi_{2n} \ge 4$. We let
			$$f(\pi) := 1\pi_{j+3}\cdots \pi_{2n}3 \pi_2 \cdots \pi_{j}2\in \GI_{2n,1}.$$
			Conversely, since permutations in $\GI_{2n,1}$ always end with $2$, each of them has a unique preimage in $\tGI_{2n,3}$. 
			\item If $\pi =3 \pi_2  \cdots \pi_{2n-2}21$, we let $f(\pi):=\mathrm{red}(3 \pi_2 \cdots \pi_{2n-2})$. Conversely, each permutation $\sigma=1\sigma_2\cdots\sigma_{2n-2}\in\GI_{2n-2,1}$ has a unique preimage $\pi:=3(\sigma_2+2)\cdots(\sigma_{2n-2}+2)21$ in $\tGI_{2n,3}$.
		\end{itemize}
		The mapping $f$ as described above is clearly a bijection, which completes the proof of \eqref{E_3}.
		
		\item Note that every $\pi \in \GI_{2n,2n}$ has the form $\pi = (2n)(2n-1)\pi_3\cdots \pi_{2n}$, where $\pi_3$ can be any number from $[2n-2]$. Removing the prefix $(2n)(2n-1)$ from $\pi $ results in a permutation $\pi'\in\GI_{2n-2}$. Conversely, appending $(2n)(2n-1)$ to the left end of $\pi'$ yields a permutation in $\GI_{2n,2n}$. This correspondence is clearly a bijection hence we have \eqref{En2n=sum of above line}.
		
		\item We construct the following bijection to establish \eqref{eq:GI_2n,2n-1}:
		$$g: \GI_{2n}\setminus\GI_{2n,2n-1} \to \GI_{2n,2n-1}.$$
		Let $\pi=\pi_1\cdots (2n)(2n-1)\pi_j \cdots\pi_{2n} \in \GI_{2n}\setminus\GI_{2n,2n-1}$. We derive its image $g(\pi)$ by advancing $2n-1$ to the left end. This is well defined since $2n>\pi_{j}$, and either $2n-1>\pi_1<2n$ or $2n-1<\pi_1=2n$. The inerse operation $g^{-1}$ is clearly moving $2n-1$ to the immediate right of $2n$.
		
		\item We give here an algebraic proof of \eqref{id:GI and S}, which relies on the following recursion for $S_{2n,k}$. This proof can be turned into a recursively defined bijection that we shall present in the next section. 
		\begin{align}\label{rec:S}
			S_{2n,n-k}=S_{2n,n+1-k}+\sum_{i=k}^{n-1}S_{2n-2,n-i}.
		\end{align}
		Assuming Theorem~\ref{En-even}, we have $$\Gi_{2n,2k+2}=\Gi_{2n,2k}+\sum\limits_{i=k}^{n-1} \Gi_{2n-2,2i},$$
		this recurrence relation combined with \eqref{rec:S} and the initial values $S_{2,1}=\Gi_{2,2}=1$ will establish \eqref{id:GI and S} by induction. It remains to prove \eqref{rec:S}. Indeed, by repeatedly applying the generation rule \eqref{Seidel rule}, we see that $S_{2n,k}=\sum_{i\ge k}S_{2n-1,i}$ and $S_{2n+1,k}=\sum_{i\le k}S_{2n,i}$, so we deduce that
		\begin{align*}
			S_{2n,n-k}-S_{2n,n+1-k}&=\sum_{i\ge n-k} S_{2n-1,i}-\sum_{i\ge n+1-k} S_{2n-1,i}
			=S_{2n-1,n-k}\\
			&=\sum_{i\le n-k} S_{2n-2,i}
			=\sum_{n-i\geq k} S_{2n-2,i}
			=\sum_{i\geq k} S_{2n-2,n-i}.
		\end{align*}
		This is \eqref{rec:S} and the proof is now completed.
	\end{enumerate} 
\end{proof}

\section{Three bijections}\label{sec:bijs}
Both kinds of Genocchi numbers have more than one combinatorial interpretations, as can be gleaned from Definition~\ref{def:five perm}. This raises natural bijective questions. In this section, we present three bijections pertaining to our first permutation model $\GI_{2n}$, the Dumont permutations of the third kind $\D^{\mathrm{III}}_{2n}$, and the collapsed permutations $\CO_{2n}$, which were first introduced by Ayyer, Hathcock, and Tetali~\cite{AHT22} and can be defined as follows. 

\begin{Def}[{\cite[Definition~4.1]{AHT22}}] \label{collapsed permutation def}
	A permutation $\pi\in\S_{2n}$ is said to be \emph{collapsed}, if for every $k\in [2n]$ we have
	\begin{align} \label{defn:collapse}
		1+ \lfloor{k/2}\rfloor \leq \pi^{-1}(k) \leq n+\lfloor k/2 \rfloor.
	\end{align}
	We denote the set of collapsed permutations of length $2n$ by $\CO_{2n}$ and its cardinality by $\Co_{2n}$.
\end{Def}

For example, the first three sets of collapsed permutations are listed below.
\begin{align*}
		&\CO_{2}=\{12\},\quad \CO_{4}=\{1234, 1324\}, \\
		&\CO_{6}=\{123456, 123546, 124356, 125346, 132456, 132546, 134256, 135246\}.
\end{align*}

Our first bijection is constructed recursively, building on combinatorial interpretations of the two recursions in \eqref{id:GI even rec} and \eqref{rec:S}, respectively. Our second and third bijections can be combined to give a direct bijective proof of the following result.
\begin{theorem}
For all $n\ge 0$, we have
\begin{align}\label{id:Co and GI}
\Co_{2n+2}=h_n=\Gi_{2n}.
\end{align}
\end{theorem}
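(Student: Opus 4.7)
The identity $h_n = \Gi_{2n}$ is already implicit in the preceding material and is flagged in the discussion after Figure~\ref{relation diagram}: Theorem~\ref{thm:label-I} gives $\Gi_{2n} = r(\cK_{2n})$, Proposition~\ref{prop:K2n} gives $r(\cK_{2n}) = r(\cH_{2n-1})$, and Hetyei's Theorem~\ref{thm:hetyei} yields $r(\cH_{2n-1}) = h_n$. Hence the substantive task is a bijective proof of the remaining equality $\Co_{2n+2} = \Gi_{2n}$.

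The plan is to construct a bijection $\Phi : \GI_{2n} \to \CO_{2n+2}$ as a composition $\Phi_3 \circ \Phi_2$, where $\Phi_2 : \GI_{2n} \to \widetilde{\D}^{\mathrm{III}}$ lands in an appropriately singled-out subset of Dumont permutations of the third kind (of cardinality $h_n$), and $\Phi_3 : \widetilde{\D}^{\mathrm{III}} \to \CO_{2n+2}$ converts it to a collapsed permutation. The construction of $\Phi_2$ should exploit the complementary parity conditions, since $\GI$ allows only $\oE$ ascents whereas $\D^{\mathrm{III}}$ allows only $\eE$ descents; a natural candidate is a simple composition of padding with two extremal letters, reversal, and a parity-preserving relabeling. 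For $\Phi_3$, the simplest guess is that $\sigma \in \widetilde{\D}^{\mathrm{III}}$ gets mapped to $\sigma^{-1}$ (or a close relative thereof), with the $\eE$-only descent condition forcing the preimage of each letter $k$ into the interval $[1+\lfloor k/2\rfloor,\,n+\lfloor k/2\rfloor]$ required by \eqref{defn:collapse}.

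The main obstacle is the verification of $\Phi_3$, since the defining condition of $\CO_{2n+2}$ is a two-sided positional bound on each value, whereas the Dumont condition is a local pattern rule. The bulk of the work is to show that the $\eE$-only descent restriction forces precisely the required interval for $\pi^{-1}(k)$, and conversely that every collapsed permutation arises this way. I would proceed by induction on $k$, using the descent restriction to pin down how $\pi^{-1}(k+1)$ can differ from $\pi^{-1}(k)$ according to the parities of $k$ and $k+1$, and matching the gains against the increment $\lfloor (k+1)/2\rfloor-\lfloor k/2\rfloor$ in the bounds defining $\CO_{2n+2}$. Once both $\Phi_2$ and $\Phi_3$ are verified to be bijections, taking cardinalities yields $\Co_{2n+2} = \Gi_{2n}$, and combining with the previously noted $\Gi_{2n} = h_n$ gives the full chain $\Co_{2n+2} = h_n = \Gi_{2n}$.
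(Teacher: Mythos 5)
Your handling of $h_n=\Gi_{2n}$ is exactly the paper's argument (Theorem~\ref{thm:label-I} combined with Proposition~\ref{prop:K2n} and Theorem~\ref{thm:hetyei}), so that half is fine. For the other equality the paper does nothing more than cite \cite[Theorem~4.3]{AHT22} for $\Co_{2n+2}=h_n$; you instead attempt to replace that citation with a new bijection, and this is where there is a genuine gap. You propose to factor a bijection $\GI_{2n}\to\CO_{2n+2}$ through ``an appropriately singled-out subset of Dumont permutations of the third kind of cardinality $h_n$.'' If the intermediate set is to live inside $\D^{\mathrm{III}}_{2n}$, no such subset exists: $|\D^{\mathrm{III}}_{2n}|=g_{n+1}$, and $g_{n+1}<h_n$ for all $n\ge 1$ (already $g_2=1<2=h_1$ and $g_3=3<8=h_2$). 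If you instead mean a subset of $\D^{\mathrm{III}}_{2n+2}$, the cardinality obstruction disappears, but then neither the subset nor the map $\Phi_2$ is specified, and your ``simplest guess'' for $\Phi_3$, namely $\sigma\mapsto\sigma^{-1}$, fails already in the first nontrivial case: $\D^{\mathrm{III}}_{4}=\{1234,1342,1423\}$, and $1342^{-1}=1423$ is not collapsed, since the letter $4$ occupies position $2$, violating the lower bound $1+\lfloor 4/2\rfloor=3$ of \eqref{defn:collapse} (likewise $1423^{-1}=1342$ is not collapsed), whereas $\CO_4=\{1234,1324\}$. So the verification you defer as ``the bulk of the work'' is not a routine check of a correct construction; as stated, the construction is either impossible or false on small cases.

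If you want a bijective substitute for the citation, the correct intermediate object is $\fD_{2n}$, the set of D-permutations, which has cardinality $h_n$ and whose two-sided positional conditions $\sigma_{2i-1}\ge 2i-1$, $\sigma_{2i}\le 2i$ match the collapsed condition \eqref{defn:collapse} under the elementary reindexing $\sigma_{2i-1}:=\pi_{n+i+1}-1$, $\sigma_{2i}:=\pi_{i+1}-1$; this is the bijection $\theta$ of Theorem~\ref{bijection between D and Co}, and composing with $\vartheta=\PLY\circ\Psi^{-1}:\GI_{2n}\to\fD_{2n}$ of Theorem~\ref{linyanbijection} yields $\Theta=\vartheta^{-1}\circ\theta:\CO_{2n+2}\to\GI_{2n}$. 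That is precisely the route the paper takes in its later subsections; for the theorem itself the short proof by citation suffices.
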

\begin{proof}
The first equality $\Co_{2n+2}=h_n$ was already derived in \cite[Theorem~4.3]{AHT22}, while the second equality $\Gi_{2n}=h_n$ follows from Theorem~\ref{thm:label-I}, Theorem~\ref{thm:hetyei}, and Proposition~\ref{prop:K2n}.
\end{proof}

\subsection{A proof of Theorem~\ref{En-even} and the first bijection}
This subsection contains three main results. Firstly, we construct a bijection $\varphi$ that proves Theorem~\ref{En-even}; secondly, we rederive \eqref{rec:S} via a second bijection $\phi$; finally, these two bijections are combined to generate recursively a bijection $\Phi$ between $\GI_{2n,2k}$ and 
$$\D^{\mathrm{III}}_{2n,2k}:=\{\pi\in\D^{\mathrm{III}}_{2n}: \text{the leftmost even letter in $\pi$ is $2k$}\},$$
for all $1\le k\le n$. This latter set was introduced by Burstein, Josuat-Verg\`es, and Stromquist~\cite[Definition~5.1]{BJS10}. They also showed in \cite[Theorem~5.3]{BJS10} that $|\D^{\mathrm{III}}_{2n,2k}|=S_{2n,n+1-k}$. In view of this, the bijection $\Phi:\GI_{2n,2k}\to\D^{\mathrm{III}}_{2n,2k}$ indeed provides a bijective approach to \eqref{id:GI and S}, as alluded to in the proof of Theorem~\ref{thm:GIS} item (iv).

\begin{proof}[Proof of Theorem~\ref{En-even}]
	For all $n\ge 2$ and $0\le k < n$, we aim to construct a bijection  $$\varphi:=\varphi_{n,k}: \GI_{2n,2k+2} \to \GI_{2n,2k}\bigcup\left(\bigcup_{i=k}^{n-1}\GI_{2n-2,2i}\right),$$ 
	which readily implies \eqref{id:GI even rec} by taking cardinalities.
	
	For $k=0$, we need to show that $\GI_{2n,2}$ is in bijection with $\bigcup_{i=1}^{n-1}\GI_{2n-2,2i}$. Clearly $\pi \in \GI_{2n,2}$ if and only if $\pi =21 \pi_3 \cdots \pi_{2n}$ with $\pi_3\ge 4$ even, hence we let $\varphi_{n,0}(\pi):=\re(\pi_3\cdots \pi_{2n})\in\GI_{2n-2,\pi_3-2}\subset\bigcup_{i=1}^{n-1}\GI_{2n-2,2i}$. Conversely, every permutation in $\GI_{2n-2}$ beginning with an even number can be turned into a permutation in $\GI_{2n,2}$ by increasing each number by $2$, and appending $21$ to its left end. So we see $\varphi_{n,0}$ maps $\GI_{2n,2}$ bijectively onto $\bigcup_{i=1}^{n-1}\GI_{2n-2,2i}$, as desired.
   
	When $k\ge 2$, recall the definition \eqref{bisection} and the bisection $\GI_{2n,2k}=\tGI_{2n,2k}\uplus\cGI_{2n,2k}$ from Proposition~\ref{prop:div} (i). For $k=1$, we need to show that $\GI_{2n,4}$ is in bijection with $\GI_{2n,2}\bigcup\left(\bigcup_{i=1}^{n-1}\GI_{2n-2,2i}\right)$. In view of the previous case with $k=0$, it suffices to construct $\varphi_{n,1}$ so that
	$$\cGI_{2n,4}\quad \underset{\text{\tiny 1 to 1}}{\xtwoheadrightarrow{\varphi_{n,1}}} \quad \bigcup_{i=1}^{n-1}\GI_{2n-2,2i},$$
	and then for any permutation $\sigma\in\tGI_{2n,4}$, let $\varphi_{n,1}(\sigma):=\varphi_{n,0}^{-1}(\varphi_{n,1}(\eta(\sigma)))$, where $\eta$ is the bijection used in the proof of Proposition~\ref{prop:div} (i). Now we construct the image under $\varphi_{n,1}$ of a given permutation $\pi=4\pi_2\cdots \pi_{2n-1}2 \in \cGI_{2n,4}$. There are three cases, each of which is seen to be invertible.
	\begin{enumerate}
		\item If $\pi_2=1$, then let
		$$\pi=41\pi_3\cdots \pi_{m-1}3\pi_{m+1}\cdots 2 \mapsto \varphi(\pi):=\re(\pi_3\cdots \pi_{m-1}21\pi_{m+1}\cdots \pi_{2n-1}),$$
		where $\varphi(\pi)\in \bigcup_{i=2}^{n-1} \GI_{2n-2,2i}$ does not end in $2$.
		\item If $\pi_2=3$ and $\pi_3 \geq6$ is even, then let
		$$\pi=43\pi_3\cdots \pi_{m-1}1\pi_{m+1}\cdots 2 \mapsto \varphi(\pi):=\re(\pi_3\cdots \pi_{m-1}1\pi_{m+1}\cdots 2),$$
		where $\varphi(\pi)\in \bigcup_{i=2}^{n-1} \GI_{2n-2,2i}$ ends in $2$.
		\item If $\pi_2=3$ and $\pi_3=1$, then let
		$$\pi=431\pi_4\cdots \pi_{2n-1}2 \mapsto \varphi(\pi):=\re(21\pi_4\cdots \pi_{2n-1})\in \GI_{2n-2,2}.$$
	\end{enumerate}
	
	For $1<k<n$, since each of the subsets $\GI_{2n,2k+2},\GI_{2n,2k},\GI_{2n-2,2k},\ldots,\GI_{2n-2,2n-2}$ has the bisection, we only construct $\varphi_{n,k}$ so that
	$$\cGI_{2n,2k+2}\quad \underset{\text{\tiny 1 to 1}}{\xtwoheadrightarrow{\varphi_{n,k}}} \quad \cGI_{2n,2k}\bigcup\left(\bigcup_{i=k}^{n-1}\cGI_{2n-2,2i}\right),$$
	and for those permutations $\sigma\in\tGI_{2n,2k+2}$, simply let
	$$\varphi_{n,k}(\sigma):=\eta^{-1}(\varphi_{n,k}(\eta(\sigma))).$$

	Now take a permutation $\pi=(2k+2)\pi_2\cdots\pi_{j-1}(2k)\pi_{j+1}\cdots 2 \in \cGI_{2n,2k+2}$, we are going to construct its image $\varphi(\pi)$. There are three cases according to the value of $\pi_2$. Note that $\pi_2<2k+2$ and $\pi_{j+1}<2k$ due to the avoidacne of $\eE$ and $\eO$.
	\begin{enumerate}[label=(\arabic*)]
		\item If $\pi_2\le 2k-1$, we get $\varphi(\pi)$ by exchanging $2k$ and $2k+2$ in $\pi$, thus 
		$$\varphi(\pi):= (2k)\pi_2\cdots\pi_{j-1}(2k+2)\pi_{j+1}\cdots 2\in \cGI_{2n,2k} \text{ with $\pi_{j+1}<2k$. }$$

		\item If $\pi_2=2k$, suppose $\pi=(2k+2)(2k)\cdots a\beta(2k+1)c\cdots 2$, where $a<2k+1$ and $\beta$ is a factor consisting of letters greater than $2k+2$. Note that $2k$ cannot be an ascent bottom while $2k+1$ cannot be an ascent top, so $a\neq 2k$ exists and $\beta\neq\varnothing$. We consider two subcases according to the value of $c$.
		\begin{enumerate}
			\item If $c>2k+1$ is even, we move $2k+2$ from the first place to the front of $2k+1$, i.e., we let 
			$$\varphi(\pi):=(2k) \cdots a\beta(2k+2)(2k+1)c\cdots 2 \in \cGI_{2n,2k}.$$
			\item If $c<2k+1$, we move $2k+2$ from the first place to the front of $2k+1$ and swap $(2k+2)(2k+1)$ with the factor $\beta$, i.e., we let 
			$$\varphi(\pi):=(2k)\cdots a(2k+2)(2k+1)\beta c\cdots 2\in\cGI_{2n,2k}.$$
		\end{enumerate}

		\item If $\pi_2=2k+1$, then $\pi =(2k+2)(2k+1)\pi_3\cdots \pi_{j-1}(2k)\pi_{j+1}\cdots 2$. The discussion further splits into two subcases according to the value of $\pi_3$.
		\begin{enumerate}
			\item If $\pi_3\ge 2k$ is even, then we let
			$$\varphi(\pi):=\re(\pi_3\cdots \pi_{j-1}(2k)\pi_{j+1}\cdots 2)\in \bigcup_{i=k}^{n-1}\cGI_{2n-2,2i}.$$
			\item If $\pi_3<2k$, then we swap $(2k+2)(2k+1)$ with $(2k)$, i.e., we let
			$$\varphi(\pi):=(2k)\pi_3\cdots \pi_{j-1}(2k+2)(2k+1)\pi_{j+1}\cdots 2 \in\cGI_{2n,2k} \text{ with $\pi_{j+1}<2k$.}$$
		\end{enumerate}
	\end{enumerate}
	To finish the proof, one needs to show that $\varphi$ is indeed a bijection. Clearly $\varphi$ is injective and invertible when restricted to each subcase. Furthermore, basing on the characterizations of $\varphi(\pi)$ summarized case-by-case in the table below, one sees that the images obtained in all subcases are mutually disjoint and cover the entire set $\cGI_{2n,2k}\bigcup\left(\bigcup_{i=k}^{n-1}\cGI_{2n-2,2i}\right)$.

	\begin{table*}[!ht]
		\renewcommand{\arraystretch}{1.5}
		\centering
		\begin{tabular}{c|c}
			$\pi$ & $\varphi(\pi)$ \\ \hline
			case (1) & in $\cGI_{2n,2k}$ without factor $(2k+2)(2k+1)$ \\ 
			case (2-a) & in $\cGI_{2n,2k}$ with factor $x(2k+2)(2k+1)y$, where $x>2k+2$ and $y>2k+2$ \\ 
			case (2-b) & in $\cGI_{2n,2k}$ with factor $x(2k+2)(2k+1)y$, where $x<2k+2$ and $y>2k+2$ \\ 
			case (3-a) & in $\bigcup_{i=k}^{n-1}\cGI_{2n-2,2i}$ \\ 
			case (3-b) & in $\cGI_{2n,2k}$ with factor $(2k+2)(2k+1)y$, where $y<2k+2$   
		\end{tabular}
	\end{table*}
\end{proof}

We list below the one-to-one correspondences between $\GI_{6,6}$ and $\GI_{6,4}\bigcup\GI_{4,4}$, and between $\GI_{6,4}$ and $\GI_{6,2}\bigcup\GI_{4,2}\bigcup\GI_{4,4}$, that are determined by the bijection $\varphi$, which are respectively the cases with $(n,k)=(3,2)$ and $(n,k)=(3,1)$ in the proof above.
	\begin{table}[H]
		\renewcommand{\arraystretch}{1.4}
		\centering
		\begin{tabular}{c|c||c|c}
			$\GI_{6,6}$ & $\GI_{6,4}\bigcup \GI_{4,4}$ & $\GI_{6,4}$ & $\GI_{6,2}\bigcup\GI_{4,2}\bigcup\GI_{4,4}$\\ \hline
			651432 & 416532 & 431652 & 2143 \\
			652143 & 421653 & 432165 & 214365 \\
			653142 & 431652 & 436512 & 4312 \\
			653214 & 432165 & 436521 & 216534 \\
			653412 & 436512 & 421653 & 216543 \\ 
			653421 & 436521 & 416532 & 4321 \\ 
			654312 & 4312 & & \\
			654321 & 4321 & & \\
		\end{tabular}
		\caption{The correspondences $\GI_{6,6}\xrightarrow{\varphi}\GI_{6,4}\bigcup \GI_{4,4}$ and $\GI_{6,4}\xrightarrow{\varphi}\GI_{6,2}\bigcup\GI_{4,2}\bigcup\GI_{4,4}$}
	\end{table}

Next, we shift our attention to $\DIII$, the set of Dumont permutations of the third kind. We begin with an observation that parallels Proposition~\ref{prop:div} (i). Note that every permutation in $\DIII$ begins with the letter $1$, let us introduce
$$\tD_{2n,2k}:=\{\pi\in\DIII:\pi_2\neq 3\}, \text{ and } \cD_{2n,2k}:=\{\pi\in\DIII:\pi_2= 3\}.$$
Then it is easy to see that the following mapping is a bijection for every $k\ge 2$:
\begin{align*}
\tau:  \cD_{2n,2k} &\to \tD_{2n,2k} \\
13 \pi_3 \cdots \pi_{i-1}(2k)\pi_{i+1} \cdots \pi_{j-1} 2\pi_{j+1} \cdots &\mapsto 1\pi_3 \cdots \pi_{i-1}(2k)\pi_{i+1} \cdots \pi_{j-1} 23 \pi_{j+1} \cdots.
\end{align*}
In particular, $\tau$ is surjective since the letter that precedes $3$ must be $2$ if it is not $1$, since $3$ cannot be a descent bottom. Therefore we have the following similar bisection when $k\ge 2$:
\begin{align}\label{DIII-bisec}
\DIII_{2n,2k}=\cD_{2n,2k}\uplus\tD_{2n,2k}.
\end{align}



Recall that $|\DIII_{2n,2k}|=S_{2n,n+1-k}$, so the following result could be viewed as a bijective approach to \eqref{rec:S}. We note in passing that a comparable result for $\D^{\mathrm{I}}_{2n}$ was derived by Burstein, Josuat-Verg\`es, and Stromquist~\cite[Theorem~5.4]{BJS10}.

\begin{theorem}\label{DIII-even}
	For all $n\geq 2$, $n>k\geq0$, there exists a bijection
	$$\phi:=\phi_{n,k}: \DIII_{2n,2k+2} \to \DIII_{2n,2k}\bigcup\left(\bigcup_{i=k}^{n-1}\DIII_{2n-2,2i}\right).$$
\end{theorem}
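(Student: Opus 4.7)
The plan is to construct $\phi$ in close analogy with the bijection $\varphi$ from the proof of Theorem~\ref{En-even}, with the role of ``first letter $=2k+2$'' in $\GI$ now played by ``leftmost even letter $=2k+2$'' in $\DIII$. Every $\pi\in\DIII_{2n,2k+2}$ admits a canonical decomposition $\pi = 1\,a_1 a_2\cdots a_j\,(2k+2)\,w$, where $a_1<a_2<\cdots<a_j$ are odd numbers (forced to be increasing since $\DIII$ forbids odd-odd descents) each $\leq 2k+1$, and $w$ is a suffix over the remaining alphabet. The letter $2k+1$ either belongs to the initial odd run (``$a_j=2k+1$'') or lies inside $w$; this dichotomy will be the $\DIII$-analog of the split $\pi_2=2k+1$ vs.~$\pi_2\neq 2k+1$ used in the proof of Theorem~\ref{En-even}.

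First I would dispense with the boundary case $k=0$. One can verify that any $\pi\in\DIII_{2n,2}$ must begin with the prefix $12$ (no odd letter can sit between $1$ and $2$ without creating a forbidden odd--even descent), and moreover the letter $3$ must then occupy position $3$ (by the same Dumont-III avoidance argument applied to the letter $3$). Thus $\pi=12\cdot\pi_3\cdots\pi_{2n}$ with $\pi_3=3$, and the map $\phi_{n,0}(\pi):=\re(\pi_3\cdots\pi_{2n})$ is a straightforward bijection $\DIII_{2n,2}\to\bigcup_{i=1}^{n-1}\DIII_{2n-2,2i}$, the inverse being ``shift letters by $+2$ and prepend $12$.'' Note that $\DIII_{2n,0}=\varnothing$ so the $i=0$ term does not appear.

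For $k\geq 1$ I would apply the bisection \eqref{DIII-bisec} to both source and target, reducing the problem to constructing $\phi_{n,k}:\cD_{2n,2k+2}\to\cD_{2n,2k}\cup\bigcup_{i=k}^{n-1}\cD_{2n-2,2i}$ (the remainder then defined by conjugation with $\tau$). The case analysis on $\cD_{2n,2k+2}$ will mirror $\varphi$'s cases (1)--(3):
\begin{itemize}
\item An analog of (1) will swap the values $2k$ and $2k+2$ in $\pi$, producing an element of $\cD_{2n,2k}$. A direct check shows this swap preserves $\DIII$ precisely when $2k+1$ is not immediately adjacent to $2k$ or $2k+2$.
\item Analogs of (2-a), (2-b) will handle the configurations where $2k+1$ sits next to $2k$; these correspond to moving the block $(2k+2)$ past an intervening even/odd factor, again landing in $\cD_{2n,2k}$.
\item An analog of (3-a), applicable when $a_j=2k+1$ and the letter after $2k+2$ is ``large,'' strips the pair $(2k+1)(2k+2)$ together with the leading $1$ and $3$ and reduces to a permutation in $\bigcup_{i\geq k}\cD_{2n-2,2i}$ via $\re$.
\item An analog of (3-b) covers the remaining case by a different rearrangement that keeps the size at $2n$.
\end{itemize}
To finish, I would verify invertibility subcase by subcase and tabulate the image patterns (as was done for $\varphi$) to show the images are mutually disjoint and exhaust the codomain.

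The main obstacle will be nailing down the case analysis itself. In $\GI_{2n,2k+2}$ the split was cleanly controlled by the single letter $\pi_2\in\{{\leq}2k{-}1,\,2k,\,2k{+}1\}$, but for $\DIII$ the relevant data is distributed across two locations: the position of $2k+1$ relative to $2k+2$, and the letter immediately following $2k+2$ in $w$. Making the local swaps preserve $\DIII$ is delicate, because moving $2k+2$ or $2k$ can easily introduce a forbidden even--odd or odd--odd descent at $2k+1$. The trickiest subcases will be those analogous to (2-a), (2-b), and (3-b), where one has to rearrange mixed odd/even factors while simultaneously preserving the ``leftmost even letter equals $2k$'' condition and the Dumont-III descent constraints throughout the whole permutation.
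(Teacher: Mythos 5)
Your overall strategy --- dispose of $k=0$ directly, then for larger $k$ use the bisection \eqref{DIII-bisec} to reduce to the $\cD$-parts and run a case analysis keyed on whether $2k+1$ is adjacent to $2k+2$ or to $2k$ --- is the same as the paper's, and your $k=0$ case and your case-(1) swap (with the correct adjacency criterion) match the paper exactly. There are, however, two genuine gaps. First, your uniform reduction ``for $k\ge 1$ apply the bisection to both source and target'' breaks at $k=1$: the bisection \eqref{DIII-bisec} is only valid when the leftmost even letter is at least $4$, and in fact $\cD_{2n,2}=\cD_{2n-2,2}=\varnothing$ because every $\pi\in\DIII_{2n,2}$ has $\pi_2=2$. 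Consequently the component-wise target $\cD_{2n,2}\cup\bigcup_{i=1}^{n-1}\cD_{2n-2,2i}=\bigcup_{i=2}^{n-1}\cD_{2n-2,2i}$ is strictly smaller than $\cD_{2n,4}$, so no bijection between these two sets can exist and the step would fail. The paper treats $k=1$ by a separate, asymmetric construction: $\cD_{2n,4}$ (forced prefix $134$) is mapped onto the \emph{whole} of $\bigcup_{i=1}^{n-1}\DIII_{2n-2,2i}$ via $\pi\mapsto\re(1\pi_4\cdots\pi_{2n})$, while $\tD_{2n,4}$ is mapped onto $\DIII_{2n,2}$ via $\phi_{n,0}^{-1}\circ\phi_{n,1}\circ\tau^{-1}$.

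Second, for $1<k<n$ the real content of the proof is the precise factorization $\pi=13\cdots a(2k+2)\beta b\cdots c\gamma(2k)d\cdots$ (with $\beta$ consisting of letters $<2k$, $\gamma$ of letters $>2k+2$, $b>2k$, $c<2k+2$) together with the exact block moves in the cases $d=2k+1$ (exchange $(2k+2)\beta$ with $(2k)(2k+1)$) and $a=2k+1$, $\beta\neq\varnothing$ (rearrange to $13\cdots(2k)\gamma\beta b\cdots c(2k+1)(2k+2)d\cdots$), plus the disjointness and surjectivity check based on whether $(2k)(2k+1)$ or $(2k+1)(2k+2)$ occurs as a factor of the image. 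You explicitly leave these moves undefined, and this is where all the difficulty lies, so the proposal does not yet constitute a proof of the theorem. One concrete slip worth flagging: in your analog of (3-a) you propose to strip $(2k+1)(2k+2)$ ``together with the leading $1$ and $3$''; removing four letters lands in $\S_{2n-4}$, whereas the paper removes only $(2k+1)(2k+2)$ and keeps the prefix $13$, which after reduction is precisely what certifies that the image lies in $\cD_{2n-2,2i}$.
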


\begin{proof}
	
	For $k=0$, it suffices to notice that $\pi \in \DIII_{2n,2}$ if and only if $\pi =123\pi_4 \cdots \pi_{2n}$, and let $\phi_{n,0}(\pi):=\re(3 \pi_4 \cdots \pi_{2n})\in \bigcup_{i=1}^{n-1}\DIII_{2n-2,2i}$. Moreover, this operation is clearly invertible, so $\phi_{n,0}$ maps $\DIII_{2n,2}$ bijectively onto $\bigcup_{i=1}^{n-1}\DIII_{2n-2,2i}$.
	
	For $k=1$, we first note that any $\pi \in \cD_{2n,4}$ must have $134$ as its prefix, and we let $\phi_{n,1}(\pi):=\re(1\pi_4\cdots\pi_{2n})$ be its image, which is in $\bigcup_{1\le i\le  n-1}\DIII_{2n-2,2i}$. Note that $\phi_{n,1}(\pi)\in\DIII_{2n-2,2}$ if and only if $\pi_4=2$. Next for any $\sigma\in\tD_{2n,4}$ we let $\phi_{n,1}(\sigma):=\phi_{n,0}^{-1}(\phi_{n,1}(\tau^{-1}(\sigma)))$ be its image, relying on the previous case with $k=0$. Together we see $\phi_{n,1}$ indeed maps $\DIII_{2n,4}=\cD_{2n,4}\uplus\tD_{2n,4}$ bijectively onto $\DIII_{2n,2}\bigcup\left(\bigcup_{1\le i\le n-1}\DIII_{2n-2,2i}\right)$.

	
	For $1<k<n$, all subsets involved enjoy a bisection like \eqref{DIII-bisec}, so again we only need to construct $\phi_{n,k}$ such that
	$$\cD_{2n,2k+2}\quad \underset{\text{\tiny 1 to 1}}{\xtwoheadrightarrow{\phi_{n,k}}} \quad \cD_{2n,2k}\bigcup\left(\bigcup_{i=k}^{n-1}\cD_{2n-2,2i}\right),$$
	and then for any $\sigma\in\tD_{2n,2k+2}$, we let 
	$$\phi_{n,k}(\sigma):=\tau(\phi_{n,k}(\tau^{-1}(\sigma))).$$

	Now given a permutation $\pi\in\cD_{2n,2k+2}$, we construct its image $\phi(\pi)$, in the following three cases, by conditioning on the position of letter $(2k+1)$. Suppose 
	$$\pi=13 \cdots a (2k+2)\beta b\cdots c\gamma (2k) d \cdots,$$
	where the prefix $13\cdots a$ contains only odd letters, the factor $\beta$ (possibly empty) contains only letters $<2k$, the factor $\gamma$ (possibly empty) contains only letters $>2k+2$, $b>2k$, and $c<2k+2$. Note further that a nonempty $\beta$ begins with an even, while a nonempty $\gamma$ ends with an even.
	\begin{enumerate}[label=(\arabic*)]
		\item If neither $a$ nor $d$ equals $2k+1$, then swapping $2k+2$ and $2k$ results in a permutation still in $\DIII$, i.e., we let
		$$\phi(\pi):= 13\cdots a (2k)\beta b\cdots c\gamma (2k+2) d \cdots \in \cD_{2n,2k}.$$
		Notice that neither $(2k)(2k+1)$ nor $(2k+1)(2k+2)$ is a factor of $\phi(\pi)$.
		
		\item If $d=2k+1$, then $d$, being odd, must be followed by a letter (if any), say $e>2k+1$. We switch $(2k+2)\beta$ with $(2k)(2k+1)$, i.e., we let 
		\begin{align*}
			\phi(\pi):= 13\cdots a (2k)(2k+1) b\cdots c\gamma (2k+2)\beta e\cdots\in\cD_{2n,2k}.
		\end{align*}

		\item If $a=2k+1$, then depending on whether the factor $\beta$ is empty or not, we have two subcases.
		\begin{enumerate}
			\item If $\beta=\varnothing$, then we take the reduced form after removing $(2k+1)(2k+2)$, i.e, we let
			$$\phi(\pi):=\re(13\cdots b\cdots c\gamma(2k)d\cdots).$$
			Note that the factor $b\cdots c\gamma$ is either empty (in which case $2k+2$ precedes $2k$ in $\pi$), hence $\phi(\pi)\in\cD_{2n-2,2k}$; or it contains\footnote{Indeed, when $\beta=\varnothing$ and $b\cdots c\gamma\neq\varnothing$, going from $2k+2$ to $2k$ in $\pi$ one must pass at least one descent, then the descent top of the first descent is an even number larger than $2k+2$.} at least one even number and its leftmost even number is larger than $2k+2$, hence $\phi(\pi)\in\bigcup_{k+1\le i\le n-1}\cD_{2n-2,2i}$.

			\item If $\beta\neq\varnothing$, we let
			$$\phi(\pi):= 13\cdots (2k)\gamma\beta b\cdots c(2k+1)(2k+2)d\cdots\in\cD_{2n,2k}.$$

			Note that $\beta$ being nonempty ensures that $2k$ is not followed by $2k+1$ in $\phi(\pi)$.
		\end{enumerate}
	\end{enumerate}	
	Employing the constraints on factors $\beta,\gamma$, and letters $b,c$, it should be clear how to invert $\phi$ in each of the subcases. Furthermore, analogous to the proof of Theorem~\ref{En-even}, we characterize below the images $\phi(\pi)$ for all subcases of $\pi$, which should assist the reader in confirming that $\phi$ is indeed a bijection in the case of $1<k<n$.
	\begin{table*}[h!]
		\renewcommand{\arraystretch}{1.5}
		\centering
		\begin{tabular}{c|c}
			$\pi$ & $\phi(\pi)$ \\ \hline
			case (1) & in $\cD_{2n,2k}$, neither $(2k)(2k+1)$ nor $(2k+1)(2k+2)$ is a factor \\ 
			case (2) & in $\cD_{2n,2k}$, $(2k)(2k+1)$ is a factor\\
			case (3-a) & in $\bigcup_{i=k}^{n-1}\cD_{2n-2,2i}$ \\ 
			case (3-b) & in $\cD_{2n,2k}$, $(2k)(2k+1)$ is not a factor while $(2k+1)(2k+2)$ is a factor   
		\end{tabular}
	\end{table*}
\end{proof}

Now that the recurrences \eqref{id:GI even rec} and \eqref{rec:S} have been understood combinatorially via the mappings $\varphi$ and $\phi$, respectively, we are in a position to give a bijective proof of~\eqref{id:GI and S}.

\begin{proof}[A bijective proof of~\eqref{id:GI and S}]
	It suffices to construct a recursively defined bijection $\Phi:=\Phi_{n,k}$ from $\GI_{2n,2k}$ to $\DIII_{2n,2k}$. More precisely, for a fixed pair $(n,k)$, assuming that $\Phi_{n',k'}$ has been defined for all pairs $(n',k')$ such that either $n'<n$ or $n'=n, k'<k$, then we define $\Phi_{n,k}$ to be the unique mapping that makes the two diagrams in Fig.~\ref{Rbij} commutative. 
	\begin{figure}[h]
		\begin{tikzpicture}[scale=1]
			\draw(0,0) node{$\GI_{2n,2k}$};
			\draw(3,0) node{$\D^{\mathrm{III}}_{2n,2k}$};
			\draw(0,3) node{$\GI_{2n,2k-2}$};
			\draw(3,3) node{$\D^{\mathrm{III}}_{2n,2k-2}$};
			
			\draw(3.3,1.5) node{$\phi$};
			\draw(1.5,3.3) node{$\Phi_{n,k-1}$};
			\draw(1.5,0.3) node{$\Phi_{n,k}$};
			\draw(-.3,1.5) node{$\varphi$};
			
			\draw[>=triangle 45, ->, dashed] (0.9,0) -- (2.1,0);
			\draw[>=triangle 45, ->] (0.9,3) -- (2.1,3);
			\draw[>=triangle 45, ->] (0,0.5) -- (0,2.5);
			\draw[>=triangle 45, ->] (3,0.5) -- (3,2.5);
			
			\draw(5.5,1.5) node{and for $i \geq k-1$,};
			\draw(8,0) node{$\GI_{2n,2k}$};
			\draw(11,0) node{$\D^{\mathrm{III}}_{2n,2k}$};
			\draw(8,3) node{$\GI_{2n-2,2i}$};
			\draw(11,3) node{$\D^{\mathrm{III}}_{2n-2,2i}$};
			
			\draw(11.3,1.5) node{$\phi$};
			\draw(9.5,3.3) node{$\Phi_{n-1,i}$};
			\draw(9.5,0.3) node{$\Phi_{n,k}$};
			\draw(7.7,1.5) node{$\varphi$};
			
			\draw[>=triangle 45, ->, dashed] (8.9,0) -- (10.1,0);
			\draw[>=triangle 45, ->] (8.9,3) -- (10.1,3);
			\draw[>=triangle 45, ->] (8,0.5) -- (8,2.5);
			\draw[>=triangle 45, ->] (11,0.5) -- (11,2.5);
			
			\draw(1.5,1.5) node[rotate=180, scale=2, transform shape] {$\circlearrowright$};
			
			\draw(9.5,1.5) node[rotate=180, scale=2, transform shape] {$\circlearrowright$};
		\end{tikzpicture}
		\caption{The recursively defined bijection $\Phi_k$ from $\GI_{2n,2k}$ to $\D^{\mathrm{III}}_{2n,2k}$}
		\label{Rbij}
	\end{figure}
	
	In other words, for any permutation $\pi\in\GI_{2n,2k}$, we define
	$$\Phi(\pi) := \phi^{-1}\circ\Phi\circ\varphi(\pi)\in\DIII_{2n,2k},$$
	where the inner mapping $\Phi$ could be either $\Phi_{n,k-1}$ (as in the left diagram of Fig.~\ref{Rbij}), or $\Phi_{n-1,i}$ for a certain $k-1\le i\le n-1$ (as in the right diagram of Fig.~\ref{Rbij}), depending on whether the image $\varphi(\pi)$ is in $\GI_{2n,2k-2}$ or in $\bigcup_{i=k-1}^{n-1}\GI_{2n-2,2i}$.
	 
	Initially for the case $n=k=1$, there is only one permutation $21$ in $\GI_{2,2}$, and only one permutation $12$ in $\DIII_{2,2}$, so we simply set $\Phi_{1,1}(21)=12$. The proof is now completed by double induction on $n$ and $k$.
\end{proof}

We list below the one-to-one correspondences between $\GI_{6,6}$ and $\DIII_{6,6}$, and between $\GI_{6,4}$ and $\DIII_{6,4}$, which are determined by the bijection $\Phi$.
\begin{table}[H]
	\renewcommand{\arraystretch}{1.4}
	\centering
	\begin{tabular}{c|c||c|c}
		$\GI_{6,6}$ & $\DIII_{6,6}$ & $\GI_{6,4}$ & $\DIII_{6,4}$\\ \hline
		651432 & 136425 & 431652 & 134256 \\
		652143 & 164235 & 432165 & 142356 \\
		653142 & 135624 & 436512 & 134562 \\
		653214 & 156234 & 436521 & 145623 \\
		653412 & 136245 & 421653 & 146235 \\ 
		653421 & 162345 & 416532 & 134625 \\ 
		654312 & 135642 & & \\
		654321 & 156423 & & \\
	\end{tabular}
	\caption{The correspondences $\GI_{6,6}\xrightarrow{\Phi}\DIII_{6,6}$ and $\GI_{6,4}\xrightarrow{\Phi}\DIII_{6,4}$}
\end{table}

\subsection{A bijection between \texorpdfstring{$\CO_{2n+2}$}{CO2n+2} and \texorpdfstring{$\fD_{2n}$}{D2n}}

Our goal in this and next subsections is to construct two bijections $\theta$ and $\vartheta$, and then combine them to get a bijection $\Theta$ between $\CO_{2n+2}$ and $\GI_{2n}$ that completes the following triangle. This is a bijective approach to \eqref{id:Co and GI}.

\begin{figure}[h]
		\begin{tikzpicture}[scale=0.55]
			\draw(0,0) node{$\CO_{2n+2}$};
			\draw(7,0) node{$\GI_{2n}$};
			\draw(3.5,6) node{$\fD_{2n}$};
			
			\draw(3.5,0.5) node{$\Theta=\vartheta^{-1}\circ\theta$};
			\draw(1.2,3) node{$\theta$};
			\draw(5.8,3) node{$\vartheta$};
			
			\draw[>=triangle 45, ->, dashed] (1.1,0) -- (6.1,0);
			\draw[>=triangle 45, ->] (0,0.6) -- (3.1,5.5);
			\draw[>=triangle 45, ->] (7,0.6) -- (3.9,5.5);
		\end{tikzpicture}
		\caption{The composed bijection $\Theta=\vartheta^{-1}\circ\theta: \CO_{2n+2}\to\GI_{2n}$}
		\label{triangle}
	\end{figure}

\begin{theorem}\label{bijection between D and Co}
For every $n\ge 1$, there exists a bijection
\begin{align*}
	\theta: \CO_{2n+2} &\rightarrow \fD_{2n}\\
	\pi=\pi_1\pi_2 \cdots \pi_{2n+1}\pi_{2n+2} &\mapsto \sigma=\sigma_1\cdots\sigma_{2n},
\end{align*}
where for $1\le i\le n$, $\sigma_{2i-1}:=\pi_{n+i+1}-1$, and $\sigma_{2i}:=\pi_{i+1}-1$.
\end{theorem}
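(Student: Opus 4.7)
The plan is to verify directly that the given formulas define a well-defined bijection, by showing (a) every $\pi\in\CO_{2n+2}$ is mapped to an element of $\fD_{2n}$, and (b) the formulas can be explicitly inverted to recover a collapsed permutation from any $\sigma\in\fD_{2n}$.

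First, I would pin down the boundary values of $\pi\in\CO_{2n+2}$. Setting $j=1$ and $j=2n+2$ in the defining inequality $1+\lfloor k/2\rfloor\le \pi^{-1}(k)\le (n+1)+\lfloor k/2\rfloor$ forces $\pi_1=1$ and $\pi_{2n+2}=2n+2$, since those are the only values of $k$ satisfying both inequalities at those extremal positions. Consequently $(\pi_2-1)(\pi_3-1)\cdots (\pi_{2n+1}-1)$ is a permutation of $[2n]$, so the interleaved word $\sigma=\theta(\pi)$ — which places the shifted second half $\pi_{n+2}-1,\ldots,\pi_{2n+1}-1$ at the odd positions and the shifted first half $\pi_2-1,\ldots,\pi_{n+1}-1$ at the even positions — is automatically a permutation of $[2n]$. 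To confirm $\sigma\in\fD_{2n}$, I would translate the two D-conditions: $\sigma_{2i}\le 2i$ becomes $\pi_{i+1}\le 2i+1$, and $\sigma_{2i-1}\ge 2i-1$ becomes $\pi_{n+i+1}\ge 2i$. Both are immediate instances of the collapsed inequality at positions $j=i+1$ and $j=n+i+1$ respectively, after a short computation with $\lfloor k/2\rfloor$.

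Next I would build the inverse map $\theta^{-1}:\fD_{2n}\to\CO_{2n+2}$ by declaring $\pi_1:=1$, $\pi_{2n+2}:=2n+2$, $\pi_{i+1}:=\sigma_{2i}+1$, and $\pi_{n+i+1}:=\sigma_{2i-1}+1$ for $1\le i\le n$. Since $\sigma+1$ takes values in $\{2,\ldots,2n+1\}$, this is clearly a permutation of $[2n+2]$ and a two-sided inverse to $\theta$ by construction. To show $\pi\in\CO_{2n+2}$, I would first rewrite the collapsed condition positionally as
\[
2j-2n-2\;\le\;\pi_j\;\le\;2j-1\qquad(1\le j\le 2n+2),
\]
and then argue case by case. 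At $j=1$ and $j=2n+2$ both inequalities are immediate from $\pi_1=1$ and $\pi_{2n+2}=2n+2$; at positions $j\in\{2,\ldots,n+1\}$ the lower bound is vacuous (the right-hand side is nonpositive) while the upper bound is exactly $\sigma_{2(j-1)}\le 2(j-1)$; at positions $j\in\{n+2,\ldots,2n+1\}$ the upper bound is vacuous (since $2j-1\ge 2n+3>2n+2\ge\pi_j$) while the lower bound is exactly $\sigma_{2(j-n-1)-1}\ge 2(j-n-1)-1$.

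I do not anticipate a serious obstacle: the content of the theorem is essentially that the Ayyer--Hathcock--Tetali collapsed condition, once the forced values $\pi_1=1$ and $\pi_{2n+2}=2n+2$ are stripped off, splits cleanly into an upper bound on the first half of $\pi$ and a lower bound on the second half of $\pi$, which are exactly the two D-conditions after the shift and interleaving. The main bookkeeping is keeping track of the index correspondences $j\leftrightarrow i$ and confirming that the $\lfloor k/2\rfloor$ rounding matches the integer inequalities $\sigma_{2i-1}\ge 2i-1$ and $\sigma_{2i}\le 2i$ with neither slack nor overshoot.
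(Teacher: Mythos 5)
Your proposal is correct and follows essentially the same route as the paper: both arguments pin down $\pi_1=1$ and $\pi_{2n+2}=2n+2$, and then observe that the collapsed inequalities at positions $i+1$ and $n+i+1$ translate exactly (with one side always vacuous) into $\sigma_{2i}\le 2i$ and $\sigma_{2i-1}\ge 2i-1$. Your positional rewriting $2j-2n-2\le\pi_j\le 2j-1$ and the explicit check that $\theta^{-1}$ lands in $\CO_{2n+2}$ are just a slightly more detailed presentation of what the paper leaves as ``easy to see.''
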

\begin{proof}
First note that due to the constraints shown in \eqref{defn:collapse}, any collapsed permutation $\pi\in\CO_{2n+2}$ must begin with $1$ and end with $2n+2$, and its image $\theta(\pi)=\sigma$ includes each entry from $\{\pi_2-1,\pi_3-1,\ldots,\pi_{2n}-1,\pi_{2n+1}-1\}=[2n]$ exactly once, so $\sigma$ is a permutation of $[2n]$. It remains to check that it belongs to $\fD_{2n}$.

Indeed, applying the condition \eqref{defn:collapse} for a permutation $\pi\in\CO_{2n+2}$, we see that for every $1\le i\le n$, there holds
\begin{align}
	& 1+\lfloor\frac{\sigma_{2i-1}+1}{2}\rfloor\le n+i+1\le n+1+\lfloor\frac{\sigma_{2i-1}+1}{2}\rfloor, \label{ineq:theta-odd}\\
	& 1+\lfloor\frac{\sigma_{2i}+1}{2}\rfloor\le i+1\le n+1+\lfloor\frac{\sigma_{2i}+1}{2}\rfloor.\label{ineq:theta-even}
\end{align}
The left inequality in \eqref{ineq:theta-odd} always holds while the right one is equivalent to
\begin{align}
	\sigma_{2i-1}\ge 2i-1.\label{D-perm-odd}
\end{align}
In the same vein, the right inequality in \eqref{ineq:theta-even} always holds while the left one is equivalent to
\begin{align}
	\sigma_{2i}\le 2i.\label{D-perm-even}
\end{align}
Now \eqref{D-perm-odd} and \eqref{D-perm-even} indicate precisely that $\sigma\in\fD_{2n}$. It is easy to see how $\theta^{-1}$ should be defined and thus the mapping $\theta$ is invertible and bijective, as claimed.

	
\end{proof}

\subsection{A bijection between \texorpdfstring{$\GI_{2n}$}{G2n} and \texorpdfstring{$\mathfrak{D}_{2n}$}{D2n} }

Recall the set of D-permutations $\fD_{2n}$ and the set of E-permutations $\cE_{2n}$ from Definition~\ref{def:five perm} items (5) and (6).

\begin{Def}[{\cite[Sec.~4.1]{LW22}}]\label{d cycyle def}
	 Let $A$ be a finite subset of $\mathbb{Z}_{>0}$, a \emph{D-cycle} (resp.~\emph{E-cycle}) on $A$ refers to a D-permutation (resp.~E-permutation) with exactly one cycle, denote all D-cycles (resp.~E-cycles) on $A$ by $\DC_{A}$ (resp.~$\EC_A$). We simply write $\DC_n$ and $\EC_{n}$ for $\DC_{[n]}$ and $\EC_{[n]}$, respectively.
\end{Def}

\begin{example}\label{cycle example}
We list below all of the D-cycles and E-cycles on the sets $[4]$ and $[6]$.
	\begin{align*}
		&\DC_{4}=\{(1~3~4~2)\},\quad \EC_{4}=\{(1~2~3~4)\}, \\
		&\DC_{6}=\{(1~3~5~6~4~2),~(1~4~3~5~6~2),~(1~5~6~3~4~2)\}, \\ 
		&\EC_{6}=\{(1~2~3~4~5~6),~(1~2~4~3~5~6),~(1~2~5~6~3~4)\}.
\end{align*}
\end{example}

Lin and Yan extended the notion of cycles to the multiset setting, and they showed \cite[Theorem~3]{LY22} that for any fixed multiset $M$, there exists a bijection $\PLY: \EC_M\to \DC_M $. They utilized this bijection to prove another conjecture of Lazar and Wachs \cite[Conjecture~6.5]{LW22} that implies conjecture~\ref{conj:Lazar-Wachs}. It is worth noting that their bijective proof relying on $\PLY$ appears to be the only bijective proof of this stronger conjecture. Although not explicitly mentioned in \cite{LY22}, we note that the permutation version of their mapping, i.e., $\PLY:\EC_{2n}\to\DC_{2n}$ can be extended to a bijection between the entire sets of E-permutations and D-permutations of length $2n$. This in turn will produce a bijection from $\GI_{2n}$ to $\fD_{2n}$ when our variant of Foata's first fundamental transformation $\Psi$ is involved. That is to say, the bijection we construct below is the composed mapping
$$\vartheta:=\PLY\circ\Psi^{-1}.$$

We need a few more definitions before we can present this bijection.

\begin{Def}\label{even cyclic double ascent def}
Given a cycle $(c_1 \ldots c_k)$, an element $c_i$ is called an \emph{ even cyclic double ascent} if $c_{i-1}<c_i<c_{i+1}$ (with the convention that $c_{k+1}:=c_1$), and $c_i$ is even. The smallest number in each cycle is called a \emph{cycle minimum}. Given a permutation $\pi$, let $\Cmin(\pi)$ denote the set of cycle minima of $\pi$, and let $\Lrmin(\pi)$ denote the set of \emph{left-to-right minima} of $\pi$.
\end{Def}

We shall give a direct description of $\vartheta$ sending a permutation $\pi\in\GI_{2n}$ to a permutation $\sigma\in\fD_{2n}$. It consists of two steps. Note that step (1) is to apply $\Psi^{-1}$ on $\pi$, while step (2) is to further apply $\PLY$.
\begin{enumerate}
	\item Separate the one-line notation of $\pi$ into cycles according to its left-to-right minima (i.e., insert the left parenthesis ``('' before every left-to-right minimum of $\pi$). Reverse the order inside each cycle (i.e., $(c_1~c_2\cdots c_k)$ becomes $(c_1~c_k~c_{k-1}\cdots c_2)$). Let the resulting expression be the cycle notation of a permutation $\hat{\pi}$.
	\item Inside each cycle, say $(c_1 \cdots c_k)$ of $\hat{\pi}$, rightward shift each even cyclic double ascent, say $c_i$, until it is larger than its successor\footnote{Since we are shifting inside a cycle, every letter has a successor. In the extreme case senario, $c_i$ will be shifted to the right end of the cycle since it is now larger its successor, namely the left end as well as the minimal letter in this cycle. So this shifting operation is well-defined.}. Let this final expression be the cycle notation of the image permutation $\sigma$.
\end{enumerate}



\begin{theorem}\label{linyanbijection}
	The mapping $\vartheta$ as defined above is a bijection from $\GI_{2n}$ to $\fD_{2n}$, such that for any $\pi\in\GI_{2n}$ and $\sigma:=\vartheta(\pi)\in\fD_{2n}$, we have
	\begin{align}\label{GI to D-stat}
	\Lrmin(\pi) = \Cmin(\sigma).
	\end{align}  
\end{theorem}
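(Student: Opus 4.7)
The plan is to factor $\vartheta$ as $\vartheta=\PLY\circ\Psi^{-1}$, where $\Psi^{-1}:\GI_{2n}\to\cE_{2n}$ is the inverse of the variant of Foata's first fundamental transformation recalled in Section~\ref{sec:pre}, and where $\PLY:\cE_{2n}\to\fD_{2n}$ is the Lin--Yan bijection~\cite{LY22} applied cycle-by-cycle. We need three things: (a) Step~(1) of the construction really is $\Psi^{-1}$; (b) Step~(2) really is a cycle-by-cycle application of $\PLY$ and therefore bijective from $\cE_{2n}$ to $\fD_{2n}$; (c) both steps preserve cycle minima, so that $\Cmin(\sigma)=\Cmin(\Psi^{-1}(\pi))=\Lrmin(\pi)$.

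For (a), recall that $\Psi$ starts with $\tau\in\cE_{2n}$, writes $\tau^{-1}$ in cycle notation with each cycle led by its minimum and the leading minima arranged in decreasing order, then drops the parentheses. The left-to-right minima of $\Psi(\tau)$ are precisely the cycle minima of $\tau^{-1}$ (which are the cycle minima of $\tau$). Inverting a cycle $(c_1\,c_2\cdots c_k)$ yields $(c_1\,c_k\,c_{k-1}\cdots c_2)$, which is exactly the ``reverse each block'' operation of Step~(1). Hence Step~(1) outputs $\hat\pi=\Psi^{-1}(\pi)\in\cE_{2n}$ with $\Cmin(\hat\pi)=\Lrmin(\pi)$.

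For (b), since every cycle of a permutation in $\cE_{2n}$ is an E-cycle on its own support and every cycle of a permutation in $\fD_{2n}$ is a D-cycle on its own support, and Lin--Yan's $\PLY:\EC_M\to\DC_M$ is a bijection preserving the underlying set $M$, the cycle-by-cycle application gives a well-defined bijection $\cE_{2n}\to\fD_{2n}$. A direct comparison identifies ``rightward shifting of even cyclic double ascents until each is larger than its successor'' with Lin and Yan's explicit procedure in~\cite[proof of Theorem~3]{LY22}. For (c), observe that if $c_1$ is the minimum of a cycle $(c_1\,c_2\cdots c_k)$, then $c_k>c_1<c_2$, so $c_1$ is never a cyclic double ascent and is therefore never shifted by $\PLY$. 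In particular $\Cmin$ is preserved, yielding \eqref{GI to D-stat}.

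The main obstacle is (b): one has to confirm that the shifting procedure is well-defined (independent of the order in which eligible double ascents are processed), terminates, lands in $\fD_{2n}$ (i.e.\ the resulting one-line notation satisfies $\sigma_{2i-1}\geq 2i-1$ and $\sigma_{2i}\leq 2i$), and is reversible. These points are essentially contained in~\cite{LY22}, so the work reduces to matching the presentation here against theirs and pointing out that shifts never cross the cycle minimum, which is what gives both the extension to all of $\cE_{2n}$ and the preservation of $\Cmin$.
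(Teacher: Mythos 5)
Your proposal is correct, and it uses the same decomposition $\vartheta=\PLY\circ\Psi^{-1}$ that the paper itself announces before stating the theorem; your verification of (a) and of the preservation of cycle minima in (c) matches the paper's observation (i) essentially verbatim. The one genuine difference lies in how the heart of the matter --- that the output of Step~(2) is a D-permutation and that Step~(2) is invertible --- gets justified. You treat Step~(2) as a black-box invocation of Lin--Yan's bijection $\PLY:\EC_M\to\DC_M$, so your argument is contingent on correctly matching the ``shift each even cyclic double ascent rightward until it exceeds its successor'' rule against the procedure in the proof of \cite[Theorem~3]{LY22} (a matching you assert but do not carry out), and on the routine but necessary remark that cycles of E- and D-permutations restrict to E- and D-cycles on their supports so that the cycle-by-cycle application is legitimate. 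The paper instead gives a self-contained verification: it derives from the $\eE,\eO,\oO$-avoidance of $\pi$ the structural facts that in each nonsingleton cycle of $\hat\pi$ every even letter exceeds its predecessor and every odd letter is below its successor, and then deduces directly that every non-fixed odd $i$ has $\sigma_i>i$ and every non-fixed even $i$ has $\sigma_i<i$, which is exactly the D-permutation condition. Your route buys brevity and an explicit link to the literature; the paper's buys independence from \cite{LY22} and makes visible exactly which parity patterns are responsible for which inequality. Neither treatment fully spells out well-definedness of the shifting order or invertibility of Step~(2), so you are not at a disadvantage there, but if you keep the citation-based route you should actually perform the comparison with Lin--Yan's procedure rather than deferring it.
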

\begin{proof}
We first show that when $\vartheta$ is applied on a permutation $\pi\in\GI_{2n}$, its image $\sigma$ is indeed in $\fD_{2n}$. Since both $\sigma_{2i}=2i$ and $\sigma_{2i-1}=2i-1$ are allowed in a D-permutation, we can ignore all the fixed points (i.e., $1$-cycles) of $\sigma$. The following observations are sufficient. 
\begin{enumerate}[label=(\roman*)]
	\item The set of left-to-right minima of $\pi$ becomes the set of cycle minima of $\hat{\pi}$, and the shifting operation in step (2) preserves cycle minima so $\Cmin(\hat{\pi})=\Cmin(\sigma)$. In particular, the cycle minima of non singleton cycles are all odd numbers since an even number cannot be an ascent bottom in $\pi$.
	\item Inside each non singleton cycle of $\hat{\pi}$, every even number is larger than its predecessor, every odd number is smaller than its successor.
	\item For any non fixed point $i$ in $\sigma$, if $i$ is odd, then $\sigma_i>i$ since $i$ is smaller than its successor after step (1), and no even number could become its new successor after the shifting operation in step (2); if $i$ is even then $\sigma_i<i$, since either $i$ is larger than its successor after step (1) and no bigger number get passed it in step (2), or it is smaller than its successor after step (1), and in view of (ii) we see that it is an even cyclic double ascent and will be shifted in step (2), leading to $\sigma_i<i$ as well.
\end{enumerate}
All of the above claims can be verified using the $\eE,\eO,\oO$ avoiding condition on $\pi\in\GI_{2n}$, the details are omitted. Observation (iii) implies that $\sigma\in\fD_{2n}$. Both steps (1) and (2) are invertible so $\vartheta$ is a bijection. Finally, observation (i) implies precisely \eqref{GI to D-stat}.
\end{proof}



\begin{example}
Given a permutation $\pi=5~10~7~12~11~9~8~4~3~1~6~2\in\GI_{12}$, we show the steps in deriving its image $\sigma$ under $\vartheta$.
	\begin{align*}
		\pi = 5~10~7~12~11~9~8~4~3~1~6~2\quad &\xrightarrow{\text{split into cycles}}\quad (5~10~7~12~11~9~8)(4)(3)(1~6~2) \\
		&\xrightarrow{\text{reverse order}}\quad (5~8~9~11~12~7~10)(4)(3)(1~2~6) \\
		&\xrightarrow{\text{step 2}}\quad (5~9~11~12~8~7~10)(4)(3)(1~6~2) = \sigma\in\fD_{12}.
	\end{align*}
\end{example}


\section{Concluding remarks}
It is worth noting that the recurrence relation shown in \eqref{id:GI even rec} fully characterizes the entries in the even-indexed columns of the triangle in Table~\ref{Tab:GI}. There are further properties of this triangle that are worth exploring. Moreover, refining the counting of permutations in $\GII$, $\GIII$, and $\GIV$ according to their first letters gives rise to three more Seidel-like triangles comparable to Table~\ref{Tab:GI}. 

Lastly, it is well known that the $n$-th Genocchi median $h_n$ is divisible by $2^n$ (see \cite{BD81,big14,PZ23}), and the sequence $\{h_n/2^n\}_{n\ge 0}=\{1,1,2,7,38,295,\ldots\}$ is registered as A000366 in the OEIS~\cite{slo} and known as the \emph{normalized median Genocchi numbers}. Given any combinatorial model enumerated by the median Genocchi numbers, it is then natural to wonder if this model allows a manifest explanation to the aforementioned divisibility of $h_n$ by $2^n$; see \cite[Remark~4.4]{AHT22}. Initial computations suggest that the four permutation models introduced in this paper, namely $\GI$, $\GII$, $\GIII$, and $\GIV$ all furnish such an explanation. We plan to continue these investigations in our ensuing work~\cite{FFLY26}.

\section*{Acknowledgement}
Shishuo Fu was partially supported by the National Natural Science Foundation of China grants 12171059. The authors would also like to acknowledge the support from the Mathematical Research Center of Chongqing University.

\end{document}